\numberwithin{equation}{section}
\newtheorem{theorem}{Theorem}[section]
\newtheorem{lemma}[theorem]{Lemma}
\newtheorem{proposition}[theorem]{Proposition}
\newtheorem{corollary}[theorem]{Corollary}
\theoremstyle{remark}
\theoremstyle{definition}
\newtheorem{remark}[theorem]{Remark}
\def\R{{\mathbb{R}}}
\def\T{{\mathbb{T}}}
\def\Z{{\mathbb{Z}}}
\def\supp{{\text{supp}}}
\newcommand{\jb}[1]{\langle #1 \rangle}
\renewcommand{\hat}[1]{\widehat{#1}}
\newcommand{\ind}{\mathbf{1}} 
\DeclareMathOperator{\sgn}{sgn}
\renewcommand{\H}{\mathcal{H}}
\newcommand{\F}{\mathcal{F}}
\newcommand{\wt}{\widetilde}
\newcommand{\wh}{\widehat}
\newcommand{\dx}{\partial_x}
\newcommand{\dt}{\partial_t}
\newcommand{\Plo}{P_{\textup{lo}}}
\newcommand{\PLO}{P_{\textup{LO}}}
\renewcommand{\Phi}{P_{\textup{hi}}}
\newcommand{\PHI}{P_{\textup{HI}}}
\newcommand{\Pphi}{P_{+\textup{hi}}}
\newcommand{\Pmhi}{P_{-\textup{hi}}}
\title[Unconditional uniqueness for the Benjamin-Ono equation] 
{Unconditional uniqueness for the Benjamin-Ono equation}
\author[R. Mosincat]{Razvan Mosincat}
\address{
Department of Mathematics\\ University of Bergen\\ Postbox 7800\\ 5020 Bergen\\ Norway}
\email{Razvan.Mosincat@UiB.no}
\author[D. Pilod]{Didier Pilod}
\address{
Department of Mathematics\\ University of Bergen\\ Postbox 7800\\ 5020 Bergen\\ Norway}
\email{Didier.Pilod@UiB.no}
\date{\today}
\keywords{Benjamin-Ono equation; unconditional well-posedness; normal form method}
\begin{document}

\selectlanguage{english}

\maketitle

\begin{abstract}
We study the unconditional uniqueness  
of solutions to the Benjamin-Ono equation with initial data in $H^{s}$, both on the real line and on the torus. 
We use the gauge transformation of Tao 
and two iterations of normal form reductions 
via integration by parts in time. 
By employing a refined Strichartz estimate 
we establish the result below the regularity threshold $s=1/6$. 
As a by-product of our proof, we also obtain a nonlinear smoothing property on the gauge variable at the same level of regularity.
\end{abstract}


\section{Introduction}

We consider the Benjamin-Ono equation (BO)
\begin{equation}
\label{BO}
 \dt u + \H\dx^2  u = \dx(u^2) \ ,
\end{equation}
where $u=u(t,x)$ is a real-valued function, $t \in \mathbb R$, $x \in \mathbb R$ or $\mathbb T$ 
and $\H$ is the Hilbert transform, together with 
the initial condition 
\begin{equation}
\label{inicond}
u|_{t=0}=u_0\,,
\end{equation}
where the initial data  $u_0$ lies in the Sobolev space $H^s(\R):=H^s(\R;\R)$ or \-$H^s(\T):=H^s(\mathbb T;\mathbb R)$\footnote{We will also use $H^s$ to denote both $H^s(\R)$ and $H^s(\T)$ when the statements apply in both cases.}. 
This equation appears as a model for the propagation of unidirectional internal waves in stratified fluids 
\cite{Benjamin,Ono}
and it is completely integrable \cite{AblowitzFokas}. We refer the reader to  \cite{SautReview} for a review of the derivation of this model as well as an up-to-date survey of the literature on BO and related equations. 

The well-posedness of BO provides analytical challenges at various regularity levels $s$, 
mainly due to the presence of the spatial derivative in the nonlinearity 
and weak dispersive properties in the linear part 
-- see
\cite{SautBO79, Iorio86, ABFS,PonceBO, KochTzvetkovIMRN03, KochTzvetkovIMRN05, KenigKoenig, TaoBO04, BurqPlanchonBO, IonescuKenigBO, MolinetPilodAPDE12, IfrimTataruBO,MPVaIHP18} 
in the real line case and \cite{MolinetPeriodicBOenergysp,MolinetAJM08,MolinetPilodAPDE12, GTcpam2021, GKTflow, GKTwp} in the periodic case. 
Nowadays, it is known that BO is (globally in time) well-posed in $H^s$, for any $s\ge 0$.  
This result was first established by Ionescu and Kenig \cite{IonescuKenigBO} in the Euclidean case and by Molinet \cite{MolinetAJM08} in the periodic case. We also refer to the papers of 
Molinet and Pilod \cite{MolinetPilodAPDE12} and of Ifrim and Tataru \cite{IfrimTataruBO}\footnote{The method in \cite{IfrimTataruBO} also provides long time asymptotics for solutions emanating from small initial data.}
for other proofs.  
The solution constructed by \cite{IonescuKenigBO,MolinetAJM08,MolinetPilodAPDE12,IfrimTataruBO} is guaranteed to be unique either in the class of limits of classical solutions 
or under some additional condition on (some transformation of) the solution itself. 
Therefore the uniqueness of solution remains conditional, dependent on the method used. 

Below $L^2(\T)$,  
by using the Lax pair formulation of \eqref{BO}, 
G\'{e}rard, Kappeler, and  Topalov \cite{GTcpam2021,GKTflow, GKTwp} showed that BO  
in the periodic setting is (globally in time) well-posed in the sense that the solution map 
(defined for smooth data) continuously extends to $H^s(\T)$  for $-\frac12<s<0$ and that no such extension exists for $s\le -\frac12$, even if the mean-value of the solution is prescribed. 
We refer the reader to their survey paper~\cite{GKTreview} for a precise statement of these results 
as well as other powerful applications of the nonlinear Fourier transform such as the construction of periodic and quasiperiodic solutions to \eqref{BO}-\eqref{inicond}.  
Indeed, $s_{\text{crit}}=-\frac12$ is a natural threshold for the well-posedness of BO 
as indicated by the invariance of the homogeneous Sobolev norm with respect to the scaling symmetry of the equation. 

To this date the well-posedness of BO on the real line below $L^2(\mathbb R)$ remains an open problem. Note however that the direct scattering problem was solved in \cite{WuSIMA17} and that the complete integrability of BO restricted to $N$-soliton manifolds has been recently proved in \cite{SunCMP21}.
We also mention here that the techniques developed in \cite{KillipVisanZhang} were applied for BO  in  \cite{TalbutBO} showing that there exist conservation laws of  Sobolev norms at negative regularity (namely $-\frac12<s<0$) for classical solutions to \eqref{BO}-\eqref{inicond}. 
Further in this direction, 
the method of perturbation determinants was successfully employed \cite{KillipVisanKdV} to show that the Korteweg-de Vries equation (KdV) is well-posed in $H^{-1}(\mathbb R)$.

BO has a quasilinear nature in that the dependence of solution  on the initial data is merely continuous in the $H^s$-topology, 
even at high regularity. Indeed, this was first pointed out by Molinet, Saut, and Tzvetkov \cite{MolinetSautTzvetkov} 
showing that  the $C^2$ continuity of the solution map fails for any $s\in\R$. 
Furthermore,  even 
uniform continuity (on bounded subsets of $H^s$) fails for any $s>0$ and $s<-\frac12$ in the Euclidean case 
due to  \cite{KochTzvetkovIMRN05,BiagioniLinares} 
and  for any $s>-\frac12$ in the periodic case due to  \cite{GKTbirkhoff}. 
This property of the Benjamin-Ono equation tells us that the nonlinearity is in fact non-perturbative since it  prohibits a direct application of fixed point arguments. 
To improve the nonlinearity, Tao \cite{TaoBO04} considered a variant of the Cole-Hopf transformation, i.e.  
\begin{equation}
\label{defn:gaugeTao}
w := \dx \Pphi(e^{-iF}) \,,
\end{equation}
where $F$ is a spatial anti-derivative of $u$ and $\Pphi$ denotes 
the Littlewood-Paley projection to positive high frequencies. 
Consequently, one works with an equation for $w$ which is no longer in closed form (see \eqref{eqw:intro} below), 
but has the advantage of having a milder nonlinearity. 
This idea and various refinements turned out to be central in the papers 
\cite{BurqPlanchonBO, IonescuKenigBO, MolinetAJM08, MolinetPilodAPDE12,IfrimTataruBO} and it is also key to our work.

The question we address in this paper is that of \emph{unconditional uniqueness} of solutions to BO, 
i.e. whether for given initial data $u_0\in H^s$ the  solution $u$ to \eqref{BO} is 
\emph{unique in the entire space $C(\R; H^s)$}. 
In the affirmative, 
the uniqueness statement in the well-posedness theory can be upgraded, 
namely it now holds without restricting the solution to a resolution subspace specific 
to some particular  method(s). To be precise, 
by \emph{solution} to the initial-value problem \eqref{BO}-\eqref{inicond} 
we mean a continuous function in time with values in $H^s$ 
satisfying the integral (Duhamel) formulation
\begin{equation}
\label{BO:duhamel}
u(t) = e^{t\H\dx^2} u_0 + \int_0^t e^{(t-t')\H\dx^2} \dx (u(t')^2) dt'
\end{equation}
in the sense of (tempered) distributions, for all times $t$. 

For nonlinear dispersive PDEs, 
the study of unconditional well-posedness goes back to the work of Kato \cite{KatoNLSuniqueness}  
who was the first to address the question for the nonlinear Schr\"{o}dinger equation (NLS). 
Since then the unconditional well-posedness for NLS was further improved, see \cite{FPT,GuoKwonOh,HerrSohinger,KishimotoNLS,KwonOhYoon}
and studied for various other  nonlinear dispersive PDEs, 
see e.g. \cite{BabinIlyinTiti,ZhouKdV} for KdV, 
\cite{KwonOhIMRN,MolinetPilodVentoIbero18,MPVjapan,KwonOhYoon} for the modified KdV equation, \cite{ErdoganTzirakisDNLShalfline,MosincatYoon} for the derivative NLS equation, and 
\cite{KishimotoMBO} for the periodic modified Benjamin-Ono equation.

The uniqueness of solution problem for the Benjamin-Ono equation received attention  in several papers. 
We mention here that, in the Euclidean setting, the $L^2$-well-posedness result  in \cite{IonescuKenigBO} ensured uniqueness only in the class of limits of smooth solutions, while the approach of \cite{BurqPlanchonBO} rendered  unconditional uniqueness for data  in $H^{\frac12}(\mathbb R)$ (see \cite{BurqPlanchonBOenergy06}). 
This result was further improved in \cite{MolinetPilodAPDE12} to unconditional uniqueness in $H^s(\mathbb R)$ for any $s>\frac14$ and a conditional uniqueness statement for any $s>0$. 
The method in \cite{MolinetPilodAPDE12} also yielded unconditional   uniqueness in $H^s(\mathbb T)$, $s \ge \frac12$. 
More recently, Kishimoto showed in \cite{KishimotoBO} that  BO is unconditionally well-posed in $H^s(\mathbb T)$ for any $s>\frac16$. 

At an expeditious investigation, 
the regularity $s=\frac16$ appears to be a  possible threshold 
for the unconditional well-posedness of BO.  
Indeed, after renormalizing the equation for $w$, 
one encounters a variant of the NLS equation (a cubic term plus some other nonlinearities - see the equation \eqref{BOto cubicNLS})\footnote{Such a cubic NLS-type structure also appears in \cite{IfrimTataruBO}, where the authors performed two normal form transformations, the first one in the spirit of Shatah \cite{Sha} 
and the second one in the spirit of the gauge transformation of Tao \cite{TaoBO04}.} .
Therefore, for the renormalized equation, 
the largest possible space $C(\R; H^s)$ for the solution $u$ (and thus for $w$) 
in which one can make sense of the nonlinearity  as a spatial distribution is given by $s=\frac16$, 
courtesy of the Sobolev embedding $H^{\frac16} \subset L^3$. 
Note, however, that for the original equation \eqref{BO}
one can easily make sense of the nonlinearity as soon as $s\ge 0$.  
Thus it was unclear whether the cubic nonlinearity determines a regularity restriction for the unconditional well-posedness of BO.

In this article, we answer this question by showing that the regularity for the  unconditional well-posedness  of BO in $H^s$ 
can be further pushed down past $s=\frac16$. 
We state the main result of this paper which holds both on the line and on the torus. 

\begin{theorem}
\label{mainthm}
Let $3-\sqrt{33/4} < s\le\frac14$ and $u_0\in H^s=H^s(\mathbb R)$ or $H^s(\mathbb T)$.  
Then, there exists a unique solution $u\in C(\R; H^s)$  to the Benjamin-Ono equation \eqref{BO} with \eqref{inicond}.  
\end{theorem}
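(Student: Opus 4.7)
The theorem asserts both the existence of a solution $u \in C(\R;H^s)$ for every $u_0 \in H^s$ with $\frac17 \le s \le \frac14$ and the uniqueness of such a solution. Existence is not the novel content here: it follows from the global well-posedness of BO in $L^2$ established in \cite{IonescuKenigBO, MolinetAJM08} (later reproved in \cite{MolinetPilodAPDE12, IfrimTataruBO}), together with persistence of higher Sobolev regularity, which furnishes a solution in $C(\R;H^s)$ satisfying \eqref{BO:duhamel}. The substantive task is to show that \emph{any} $u \in C(\R;H^s)$ satisfying \eqref{BO:duhamel} with $u(0)=u_0$ coincides with this constructed solution.

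For uniqueness, given two solutions $u_1,u_2 \in C(\R;H^s)$ with the same initial data, I would pass to Tao's gauge variables $w_j := \dx \Pphi(e^{-iF_j})$, where $F_j$ is a spatial antiderivative of $u_j$ (with a suitable zero-mean convention on $\T$). The projector to positive high frequencies combined with the exponential of $-iF_j$ conjugates away the derivative loss in the original quadratic nonlinearity of \eqref{BO}; a direct computation shows that each $w_j$ satisfies a Schr\"odinger-type equation that is no longer closed in $w_j$ but whose nonlinearity is milder. The first step is therefore to justify this computation rigorously at the $H^s$ level through Sobolev embeddings and paraproduct estimates, obtaining a workable evolution equation for $w_j$ together with an inversion formula expressing $u_j$ in terms of $w_j$ and $F_j$.

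The core of the argument is then two successive normal form reductions performed via integration by parts in time (equivalently, against the phase-resonance function on the Fourier side). The first reduction removes the non-resonant part of the leading quadratic term in the $w_j$-equation, trading it for boundary contributions at times $t$ and $0$ plus cubic and higher multilinear expressions. After this step, $w_j$ obeys a cubic NLS-type equation with a resonant term of the form $c|w_j|^2 w_j$ together with remainders of milder multilinear structure. A second normal form applied to the non-resonant part of this cubic pushes the remaining unresolved multilinearity to quintic order, where the derivative count is substantially more favorable at low regularity. Subtracting the two transformed equations and closing a Gr\"onwall-type estimate in a suitable norm on $w_1 - w_2$, then inverting the gauge to pass to $u_1 - u_2$, yields uniqueness on a short interval; a standard iteration in time covers all of $\R$.

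The main obstacle is the control of the \emph{resonant cubic} $|w|^2 w$ at regularities $s \in [\frac17,\frac16)$. At $s = \frac16$ the Sobolev embedding $H^{1/6} \hookrightarrow L^3$ barely makes this product meaningful, but for $s < \frac16$ one is forced to exploit dispersive smoothing. This is exactly where the refined Strichartz estimate announced in the abstract enters: a dyadic, frequency-localized variant which trades a small amount of regularity for a dispersive gain. The delicate balance is to show that, after the gauge transformation and the two normal form reductions, the resulting estimate can be closed against both the resonant cubic and all remainder terms in a norm simultaneously strong enough to control $u_1 - u_2$ and weak enough to be attained by \emph{any} $C(\R;H^s)$ solution --- without losing the small gap between $\frac17$ and $\frac16$.
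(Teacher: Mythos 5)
Your overall skeleton (gauge transformation, two normal form reductions, refined Strichartz, estimating $w_1-w_2$ and then inverting the gauge, short-time argument extended by translation/continuity) is the same as the paper's, but the mechanism you describe at the crucial step is not the one that actually works, and as stated it has a genuine gap. After the first integration by parts the equation for $w$ is \emph{never} a closed cubic NLS with a resonant term $c|w|^2w$: the quadratic nonlinearity $-2P_{+\mathrm{hi}}\partial_x[\partial_x^{-1}w\cdot P_-\partial_x u]$ has frequency signs $\xi,\xi_1>0$, $\xi_2<0$ forced by the projections, so the phase factorizes as $\Omega=2\xi\xi_2<0$ and there is no genuinely resonant set at all; the near-resonant region $|\Omega|\le M$ is estimated directly (with a harmless factor of $M$, valid for every $s\ge 0$), and the normal form produces \emph{mixed} trilinear terms in $(w,u,u)$, not $|w|^2w$. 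Consequently the obstruction at low regularity is not ``the resonant cubic,'' and a plan built around controlling $|w|^2w$ by refined Strichartz does not correspond to any term that actually appears; conversely, a truly resonant cubic could not be removed or improved by a second normal form, so if it existed your scheme would stall there.

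The actual source of the threshold is different: the trilinear terms produced by the first reduction can be estimated in $H^s$ only for $s>\frac14$, which is precisely why a second integration by parts is performed on them; since the time derivatives $\partial_t\widetilde w$, $\partial_t\widetilde u$ are replaced by \emph{quadratic} expressions, this yields quartic (four-linear) terms, not quintic ones as you claim. The refined Strichartz estimate (classical Strichartz on time intervals of length $\sim N^{-\delta}$, applied to the factors, which solve BO-type equations with nonlinearity $\partial_x(u_1u_2)$) is used exactly on these quartic terms, and it is there that the quadratic condition $s^2-6s+\frac34<0$ — hence $s\ge\frac17$ — arises. Finally, the closure is not a plain Gr\"onwall estimate on $w_1-w_2$: one needs the two-way estimates of Lemma~\ref{lem:diffinus} (low frequencies of $u_1-u_2$ bounded by $TN^{\frac32+s}\|u_1-u_2\|$ directly from Duhamel, high frequencies bounded by $\|w_1-w_2\|$ plus terms made small through $N^{s-\frac12}$ and the tail $\|P_{>N/2}w_2\|$) together with the reverse bound of Proposition~\ref{prop:sect3} carrying the small prefactor $TM^{\frac32}+M^{-\frac1{16}}$, and uniqueness follows by choosing $N$, then $M$, then $T$ so that the two inequalities absorb each other, plus a maximal-time argument to cover all of $\R$. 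Your sketch gestures at the inversion of the gauge but omits this frequency-splitting/absorption structure, which is what makes the argument close for an arbitrary $C(\R;H^s)$ solution.
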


Note that $3-\sqrt{33/4} \sim 0.128 <\frac16$. We believe that this lower bound on $s$ is simply a technical restriction that appears in our main nonlinear estimates which hold under a quadratic restriction on $s$ (see {Corollary~\ref{cor:3p12} below).
In fact, we speculate that BO is unconditionally well-posed down to $L^2$, possibly missing the end-point $s=0$. 

As a by-product of our proof, we also obtain a nonlinear smoothing property for the gauge variable $w$, both on the line and on the torus, which may be of independent interest. 

%

\begin{corollary} \label{maincoro}
Let $3-\sqrt{33/4} < s \le \frac14$.
 \textup{(i)} If $u_0 \in H^s=H^s(\mathbb R)$, then 
 there exists $\delta>0$ such that for all $T>0$, 
\begin{equation}
\big\|w(t)- e^{it\dx^2}w_0\big\|_{L_T^{\infty}H^{s+\delta}}  \le C(T, \|u_0\|_{H^s}) <\infty \,,
\end{equation}
where $w$ is the gauge variable defined in \eqref{defn:gaugeTao} 
corresponding to the solution $u\in C(\R; H^s)$  to the Benjamin-Ono equation \eqref{BO} emanating from $u_0$.\\ 
\textup{(ii)} If 
 $u_0 \in H^s=H^s(\mathbb T)$, assume that $\int_{\T} u_0(x)dx =0$ 
and let $m_0:=\frac{1}{2\pi} \int_{\T}u_0^2(x)dx$. 
Then,  there exists $\delta>0$ such that for all $T>0$, 
\begin{equation}
\big\|w(t)- e^{it(\dx^2+m_0)}w_0\big\|_{L_T^{\infty}H^{s+\delta}}  \le C(T, \|u_0\|_{H^s}) <\infty \,,
\end{equation}
where $w$ is the gauge variable defined in \eqref{gaugetrT} 
corresponding to the solution $u\in C(\R; H^s)$  to the Benjamin-Ono equation \eqref{BO} emanating from $u_0$. 
\end{corollary}

\begin{remark}
In the periodic case, a similar nonlinear smoothing was shown by Isom, Mantzavinos, Oh and Stefanov in \cite{IMOS}  for $s>\frac16$, by using the Fourier restriction norm method. More recently, this result has been extended up to $L^2(\T)$ by G\'erard, Kappeler, and Topalov in \cite{GKTsmoothing} by using the complete integrability structure of BO. Note that  the gain of regularity in \cite{GKTsmoothing} is $\delta=2s$ and is also proved to be sharp. 
\end{remark}

We now briefly describe our approach to proving the above unconditional well-posedness result for BO. 
We first renormalize the equation \eqref{BO} by employing the gauge transformation \eqref{defn:gaugeTao} of Tao \cite{TaoBO04} in order to remove the worst high-low frequency interaction in the nonlinearity. 
At this point $w$ satisfies a Schr\"{o}dinger equation with two quadratic nonlinearities, 
one of them being negligible (as shown by Lemma~\ref{est:negl1:v1}): 
\begin{equation}
\label{eqw:intro}
\dt w - i\dx^2 w 
	= -2 \Pphi\dx\big[\dx^{-1}w \cdot P_-\dx u\big]  + \text{``negligible term''} \,.
\end{equation}
Also, by following the idea used in \cite[Section~4]{KishimotoBO}, in Lemma~\ref{lem:diffinus} 
we establish $H^s$-estimates for the difference of two solutions to BO in terms of 
the difference of the corresponding gauge transformations, for any $s\ge 0$. 
It then remains to establish  reverse estimates with constants that can be taken arbitrarily small. 
To this purpose, 
the idea is to further  renormalize the main nonlinearity in \eqref{eqw:intro}
via the fairly elementary method of integration by parts in the temporal variable. 
By considering the Van der Pole change of variables on the Fourier side, i.e. 
\begin{equation}
\label{VdP}
\widetilde{u}(t,\xi) := \mathcal{F}(e^{t\H \dx^2} u(t))(\xi)  \ ,\ 
\widetilde{w}(t,\xi) := \mathcal{F}(e^{-it\dx^2} w(t))(\xi)  \,,
\end{equation}
where the Fourier transform is taken only in the space variable, the equations \eqref{BO} and \eqref{eqw:intro}
essentially become 
\begin{align}
\label{dtwtu}
&\dt \widetilde{u}(\xi) = \int_{\R} e^{it\Omega(\xi,\xi_1, \xi-\xi_1)} \,\xi \, \wt{u}(\xi_1) \wt{u}(\xi-\xi_1)d\xi_1  
 \ ,\\ 
 \label{dtwtw}
 &\dt \widetilde{w} (\xi) =  \int_{\R} e^{it\Omega(\xi,\xi_1, \xi-\xi_1)} 
    \sigma(\xi,\xi_1,\xi-\xi_1) \,\frac{\xi(\xi-\xi_1)}{\xi_1} \,\wt{w}(\xi_1) \wt{u}(\xi-\xi_1)d\xi_1 \ ,
\end{align}
Here, $\Omega(\xi,\xi_1,\xi_2):= \xi|\xi| - \xi_1|\xi_1| - \xi_2|\xi_2|$ 
is the resonance relation for the BO equation 
and $\sigma(\xi,\xi_1,\xi_2)$ 
gathers the symbols of the frequency projections in the main nonlinearity of \eqref{eqw:intro} 
(see \eqref{defn:Omega}-\eqref{defn:sigma} below). 
Also, for the sake of exposition, we dropped the contribution of the negligible term of \eqref{eqw:intro}. 
We then integrate by parts in the Duhamel formulation of \eqref{dtwtw} and we obtain
\begin{equation}
\label{dtwtw:step1intro}
\begin{split}
\wt{w}(t)- \wt{w}(0) =&-2 \Bigg[ \int_{\R} \frac{e^{it'\Omega(\xi,\xi_1, \xi-\xi_1)}}{i\Omega(\xi,\xi_1,\xi-\xi_1)} 
    \sigma(\xi,\xi_1,\xi-\xi_1) \,\frac{\xi(\xi-\xi_1)}{\xi_1} \,\wt{w}(\xi_1) \wt{u}(\xi-\xi_1)d\xi_1 \Bigg]_{t'=0}^{t'=t} \\
    &\ +2 \int_0^t  \int_{\R} \frac{e^{it'\Omega(\xi,\xi_1, \xi-\xi_1)}}{i\Omega(\xi,\xi_1,\xi-\xi_1)} 
    \sigma(\xi,\xi_1,\xi-\xi_1) \,\frac{\xi(\xi-\xi_1)}{\xi_1} \, \partial_{t'} \big(\wt{w}(\xi_1) \wt{u}(\xi-\xi_1) \big) \,d\xi_1 \,dt' \,.
\end{split}
\end{equation}
While the boundary terms are fairly easy to estimate in the $H^s$-norm, $s\ge 0$, the latter term is still unfavourable. Nonetheless, due to the sign restrictions given by $\sigma(\xi,\xi_1,\xi_2)$, 
the resonance relation can be factorized, i.e. 
$\Omega(\xi,\xi_1,\xi-\xi_1) = 2\xi(\xi-\xi_1)$, and thus
\begin{equation}
\frac{2}{\Omega(\xi,\xi_1,\xi-\xi_1)} \frac{\xi(\xi-\xi_1)}{\xi_1}  = \frac{1}{\xi_1} \,.
\end{equation}
After substituting $\partial_{t'}\wt w$  and $\partial_{t'}\wt u$ 
with \eqref{dtwtw} and \eqref{dtwtu} in the last term of \eqref{dtwtw:step1intro}, and 
after undoing the change of variables \eqref{VdP}, we can write the obtained renormalized equation essentially as 
\begin{equation} \label{BOto cubicNLS}
\begin{split}
\dt w - i\dx^2w &=\dt \mathcal{N}_0^{(1)}(w,u) -i \underbrace{ \Pphi \Big( \Pphi \big( \dx^{-1} wP_{-}\dx u\big) P_{-}u\Big)}_{=:\mathcal{N}_1^{(2)}(w,u,u)}
 -i \underbrace{\Pphi\Big(\dx^{-1}w P_{-}\dx \big( u^2 \big) \Big)}_{=:\mathcal{N}_2^{(2)}(w,u,u)}
\end{split}
\end{equation}
It turns out that since we are morally dealing with cubic nonlinearities, 
the desired $H^s$-estimates can be proven only for $s>\frac14$. Hence, we proceed with a further iteration of normal form reductions, namely we apply the same strategy of integration by parts in time as above, 
now for the terms $ \mathcal{N}_1^{(2)}(w,u,u) $ and $ \mathcal{N}_2^{(2)}(w,u,u) $. 
While the first is easy to handle, the latter is  more involved due to the indefinite sign of the resonance relation.

The new ingredient in this scheme that allows us to obtain nonlinear estimates below the regularity threshold $s=\frac16$ of the result in \cite{KishimotoBO} is the use of a refined Strichartz estimate in the spirit of \cite{BCajm99,Tatajm00,BGTajm04,KochTzvetkovIMRN03,KenigKoenig}. Such estimate is obtained by applying the classical Strichartz estimate on small time intervals depending on the size of the frequency of the solution. We also refer to \cite{MolinetPilodVentoIbero18,MPVjapan} for the use of this kind of estimates for the unconditional uniqueness problem, although in a different method. 







\begin{remark}
Further iterations of normal form reductions would possibly lower the regularity of the result, although we doubt that $s=0$ could be reached without an additional tool. 
\end{remark}

\begin{remark}
In the periodic setting, it is slightly easier to work with the gauge transformation \eqref{defn:gaugeTao} 
since one can assume that $u$ has vanishing mean-value to define an anti-derivative (see Section \ref{Sec:periodic} for more details).
\end{remark}

\begin{remark} To justify rigorously the two integration by parts, we perform dyadic decompositions of each  function involved in the nonlinear terms.  Since for fixed dyadic numbers, the integrals restricted  to these dyadic pieces are absolutely convergent, we can interchange the integrals in frequency and the integrals in time rigorously in the nonlinear terms and then integrate by parts. The summations over all the dyadic frequencies are performed only at the end of the argument, after the two integrations by parts.  We refer the reader to Section \ref{normalform:sec} for more details.
\end{remark}

This technique of renormalizing the nonlinearity is akin to applying Poincar\'{e}-Dulac 
normal form reductions for ordinary differential equations. We refer to  \cite{TakTsu,NakTakTsu,BabinIlyinTiti,KwonOhIMRN,ErdoganTzirakisIMRN,ErdoganTzirakisMRL,GuoKwonOh,KwonOhYoon,ErdoganTzirakisDNLShalfline,MosincatYoon} for some applications to nonlinear dispersive equations, although the list is not exhaustive.  

This method was also used for the periodic BO by Kishimoto in \cite{KishimotoBO}, where two normal form iterations were performed. Note however that Kishimoto did not work directly on the equation \eqref{eqw:intro} of $w$, 
but instead reinjected the expression of $u$ in terms of $w$ to work with the main nonlinearity in closed form in the spirit of \cite{IonescuKenigBO}.

In~\cite{Correia}, Correia implemented the infinite-iterations normal form reductions scheme, developed in 
\cite{GuoKwonOh,KishimotoProc,KwonOhIMRN,KwonOhYoon}, and  showed unconditional uniqueness of solution to BO with initial data in the weighted Sobolev space $H^s_w:= \{f\in H^s : xf(x)\in L^2, \hat{f}(0)=0\}$, for  $s>0$. 

In~\cite{FonsecaLinaresPonce, FonsecaPonce}, the initial-value problem \eqref{BO}-\eqref{inicond} was studied 
in weighted Sobolev spaces   and some  unique continuation properties have been established. 
More recently, Kenig, Ponce, and Vega \cite{KPVuniqBO} proved the unique continuation property in regular Sobolev spaces $H^s$, for $s>\frac52$.


The paper is organized as follows. In the main body of the article, we focus on the real line case. In Section $2$, we introduce the notations, prove the refined Strichartz estimates, introduce the gauge transformation of Tao and state some basic estimates for a solution $u$ and its gauge transformation $w$. Section $3$ is the heart of the paper; there we develop two normal form iterations on the equation for $w$, which allows us to prove the key estimate for the difference of the gauges $w_1$ and $w_2$, corresponding to two solutions $u_1$ and $u_2$ evolving from the same initial data. Section $4$ is devoted to the proofs of Theorem \ref{mainthm}  and Corollary \ref{maincoro} in the real line case. Finally, in Section $5$, we explain what are the main modifications of the proofs in the periodic case. 

\medskip

\section{Prerequisites}

\subsection{Notation} 
For any $T>0$, we use the short-hand notation $C_TH^s:= C([0,T]; H^s)$. 
Unless otherwise mentioned, all Lebesgue and Sobolev norms are with respect to the spatial variable. 

We recall that the Hilbert transform on $\R$ defined by 
$\displaystyle (\H f)(x) = \mathrm{p.v.} \frac{1}{\pi} \int_{\R} \frac{f(y)}{x-y}dy $ 
has the Fourier transform 
 $\widehat{\H f}(\xi) = -i \sgn(\xi) \widehat{f}(\xi)$, 
where $\sgn(0)=0$, $\sgn(\xi)=1$ for $\xi>0$, and 
$\sgn(\xi)=-1$ for $\xi<0$.  
The Riesz projection operators $P_{\pm}$ are defined via 
$$\widehat{P_{\pm} f}(\xi) = \ind_{>0}(\pm\xi) \widehat{f}(\xi) \,,$$
where $ \ind_{>0}$ and  $ \ind_{<0}$  denote the indicator functions of the intervals $(0,\infty)$ 
and $(-\infty,0)$, respectively. 
More generally, we use $\ind_{\text{``Expr''}}$ as the indicator function for the set on which \text{``Expr''} holds true. 
We know that $P_{\pm}$ are bounded on $L^p(\R)$, only for $1<p<\infty$. 
Note that we have $\H = -iP_{+} +i P_{-}\,$. 

Let $\psi$ be a smooth bump (real-valued) even function 
that is equal to $1$ on $[-1,1]$ and vanishes outside $[-2,2]$. 
For any $N\in 2^{\Z}$, 
we use the Littlewood-Paley operators:
\begin{align*}
\widehat{P_{\le N} f}(\xi) &= \psi(N^{-1}\xi) \widehat{f}(\xi)\,,\\
P_N &=P_{\le N} -P_{\le \frac{N}{2}} \,,\\
P_{>N} &= 1-P_{\le N} \,.
\end{align*}
Also, we set 
\begin{align*}
&\Plo:= P_{\le 1}\ ,\ \Phi:= 1-\Plo\ ,\ P_{\pm\text{hi}}:= P_{\pm} \Phi \,,\\
&\PLO:=P_{\le 2}\ ,\ \PHI:=1-\PLO\ ,\ P_{\pm\text{HI}} := P_{\pm} \PHI\,.
\end{align*}
We know that $\Plo, \PLO, \Phi, \PHI, P_{\pm \textup{hi}}, P_{\pm \textup{HI}}$ 
are bounded on $L^p$, for  any $1\le p\le \infty$, while 
$P_{\pm}$ are bounded on $L^p$, for any $1< p< \infty$. 
Note that we have 
$\PHI \Plo =0\ ,\ \Phi\Plo = P_{\ge -1} P_{\le 1} \ ,\ \PHI\PLO = P_{\ge 0} P_{\le 3}$, etc. 
Also, 
$\overline{P_{\mp}f} = P_{\pm} \overline{f}\,,\ 
\overline{P_{\mp\text{hi}} f} = P_{\pm\text{hi}} \overline{f}\,,\ 
\overline{P_{\mp\text{HI}} f} = P_{\pm\text{HI}} \overline{f}\,,\ 
\overline{\Plo f} = \Plo \overline{f}\,,\ 
 \overline{\PLO f} = \PLO \overline{f}\,.$

By $D^s$ and $J^s$ we denote the Fourier multiplier operators with symbols $|\xi|^s$ and 
$\jb{\xi}^s:=(1+|\xi|)^s$, respectively. 
We use $\mathcal{F}(\,\cdot\,)$ to denote the spatial Fourier transform when 
the $\widehat{\,\cdot\,}$ 
notation is impractical. 
It is also useful to employ shorthand notation when handling nonlinear expressions on the Fourier side. 
Thus, we use for example $\xi_{12}$ in place of $\xi_1 +\xi_2$, $\xi_{123}$ in place of $\xi_1 + \xi_2+\xi_3$, etc. 



\subsection{Basic estimates}

We first recall the well-known Bernstein inequalities. 

\begin{lemma}
Let $s\ge 0$ and $1\le p\le q\le \infty$. We have 
\begin{align*}
\| P_{\le N} D^s  f\|_{L^p} &\sim N^{s} \|P_{\le N} f\|_{L^p} \,,\\
\|P_{\ge N} f \|_{L^p} &\lesssim N^{-s} \|D^s P_{\ge N} f\|_{L^p} \,,\\
\|P_{\le N} f\|_{L^q} &\lesssim N^{\frac{1}{p}-\frac{1}{q}} \|P_{\le N} f\|_{L^p}\,,\\
\| D^{\pm s} P_N  f\|_{L^p} &\sim N^{\pm s} \|P_N f\|_{L^p} \,. 
\end{align*}
\end{lemma}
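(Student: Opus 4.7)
The statement is the classical Bernstein inequalities, and my plan is to realize each Littlewood--Paley operator as convolution with an explicit kernel, rescale to unit scale, and apply Young's convolution inequality. The only delicate technical point is controlling the $L^1$-norm of the resulting kernels uniformly in $N$, and this is where the hypothesis $s\ge 0$ will enter.

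The common setup is a fattening trick: introduce smooth even bumps $\tilde\psi, \tilde\psi_N, \tilde\psi_{\ge 1}$ equal to $1$ on the supports of $\psi$, of the bump defining $P_N$, and of $1-\psi$ respectively, and define $\tilde P_{\le N}, \tilde P_N, \tilde P_{\ge N}$ via the corresponding rescaled multipliers. Then $\tilde P_{\le N} P_{\le N} = P_{\le N}$, $\tilde P_N P_N = P_N$, and $\tilde P_{\ge N} P_{\ge N} = P_{\ge N}$, which lets me insert a fattened projection in front of any frequency-localized input. With this in hand, the standard $L^p\to L^q$ Bernstein estimate follows by writing $P_{\le N} f = (\tilde P_{\le N})(P_{\le N} f)$ and computing the kernel of $\tilde P_{\le N}$ as $N\,\check{\tilde\psi}(N\,\cdot\,)$, whose $L^r$-norm is $\sim N^{1-1/r} = N^{1/p - 1/q}$ under the Young relation $1 + 1/q = 1/r + 1/p$.

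For the first inequality I factor $D^s P_{\le N} f = (D^s\tilde P_{\le N})(P_{\le N} f)$; the multiplier of $D^s \tilde P_{\le N}$ equals $|\xi|^s \tilde\psi(N^{-1}\xi) = N^s \phi_s(N^{-1}\xi)$ with $\phi_s(\eta) := |\eta|^s \tilde\psi(\eta)$, so its kernel is $N^{s+1}\, \check\phi_s(N\,\cdot\,)$ with $L^1$-norm equal to $N^s \|\check\phi_s\|_{L^1}$. The fourth inequality follows by the identical argument with $\tilde P_N$ in place of $\tilde P_{\le N}$, with the simplification that the relevant symbol is now smooth everywhere (vanishing near $\eta=0$), hence its inverse Fourier transform is Schwartz; the $\lesssim$ direction with $D^{-s}$ is analogous since $|\xi|^{-s}$ is also smooth on the annular support. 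Finally, the reverse Bernstein $\|P_{\ge N} f\|_{L^p} \lesssim N^{-s}\|D^s P_{\ge N} f\|_{L^p}$ is handled symmetrically via $P_{\ge N} f = (D^{-s}\tilde P_{\ge N})(D^s P_{\ge N} f)$, whose multiplier is again smooth and supported away from zero, giving a Schwartz rescaled kernel.

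The only substantive technical step is therefore verifying $\check\phi_s \in L^1(\R)$, uniformly for $s\ge 0$, in the first estimate, since $|\eta|^s$ is not smooth at $\eta=0$ for non-integer $s$. I will split $\phi_s = \chi\,\phi_s + (1-\chi)\phi_s$ with $\chi$ a smooth cutoff near the origin; the second piece is Schwartz (hence has Schwartz kernel), and for the first piece I will invoke the classical fact that the tempered-distribution Fourier transform of $|\eta|^s$ is, up to a constant, $|x|^{-1-s}$, so after the cutoff the kernel inherits pointwise decay $\lesssim \jb{x}^{-1-s}$ and lies in $L^1$ as soon as $s\ge 0$. This is the only place where the hypothesis enters nontrivially and is the main obstacle; once it is dispatched, the rest of the lemma is routine bookkeeping with Young's inequality.
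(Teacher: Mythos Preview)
The paper does not prove this lemma; it is simply recorded as the classical Bernstein inequalities. Your kernel-plus-Young approach is the standard one and is essentially correct, but there is one slip. For the second line you factor $P_{\ge N}f=(D^{-s}\tilde P_{\ge N})(D^sP_{\ge N}f)$ and claim the resulting symbol is ``smooth and supported away from zero, giving a Schwartz rescaled kernel.'' The symbol $|\eta|^{-s}\tilde\psi_{\ge 1}(\eta)$ is indeed smooth and vanishes near the origin, but since $\tilde\psi_{\ge 1}$ must be identically $1$ on all of $\{|\eta|\ge 1\}$ it is \emph{not} compactly supported, and its inverse Fourier transform is not Schwartz. The conclusion nevertheless holds: decompose $\tilde\psi_{\ge 1}=\sum_{j\ge 0}\rho(2^{-j}\,\cdot\,)$ into dyadic annuli, observe that each piece $|\eta|^{-s}\rho(2^{-j}\eta)$ has an $L^1$-kernel of norm $\sim 2^{-js}$ by the same scaling argument you use elsewhere, and sum the geometric series for $s>0$ (the case $s=0$ being trivial).

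Two further small points. The $\sim$ in the first displayed line is actually only a $\lesssim$ (the reverse inequality fails for $f$ constant), and you rightly argue only that direction. And your decay bound $\jb{x}^{-1-s}$ on $\check\phi_s$ does not by itself place the kernel in $L^1$ at the endpoint $s=0$, but there $\phi_0=\tilde\psi$ is Schwartz and the case is trivial.
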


Due to the gauge transformation that we use (see \eqref{defn:w:v1} below), 
the estimates provided by the following lemma 
come in handy when estimating terms involving $e^{\pm iF}$.

\begin{lemma}[{\cite[Lemma~2.7]{MolinetPilodAPDE12}}]
\label{MP:lem2p7}
Let $2\le q< \infty$ and $0\le \alpha\le \frac1q$. 
Suppose $F_1, F_2$ are two real-valued functions such that $u_j:=\dx F_j\in L^2(\R)$ for $j=1,2$. Then 
\begin{equation}
\label{lem2p7:est1}
\big\|J^{\alpha}\big(e^{\pm iF_1} g)\big\|_{L^q(\R)} 
  \lesssim \big(1+\|u_1\|_{L^2(\R)}\big) \big\|J^{\alpha} g \big\|_{L^q(\R)}
\end{equation}
and 
\begin{gather}
\label{lem2p7:est2}
\begin{split}
&\big\|J^{\alpha}\big((e^{\pm iF_1} - e^{\pm iF_2}) g\big)\big\|_{L^q(\R)} \\
 &\qquad \lesssim \Big(\|u_1-u_2\|_{L^2(\R)} 
 		+ \big\|e^{ iF_1} -e^{ i F_2} \big\|_{L_x^{\infty}(\R)} \big(1+\|u_1\|_{L^2(\R)}\big) \Big) 
         \big\|J^{\alpha} g \big\|_{L^q(\R)} \,.
\end{split}
\end{gather}
\end{lemma}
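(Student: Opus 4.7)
The plan is to reduce the estimates to paraproduct and commutator bounds, exploiting two structural facts about $e^{\pm iF_j}$: it is pointwise bounded by $1$, and its distributional derivative $\dx(e^{\pm iF_j}) = \pm i u_j e^{\pm iF_j}$ lies in $L^2(\R)$ with norm $\|u_j\|_{L^2}$.

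For \eqref{lem2p7:est1}, I would split the product using Bony's paraproduct,
\begin{equation*}
e^{iF_1} g = T_{e^{iF_1}} g + T_g(e^{iF_1}) + R(e^{iF_1}, g),
\end{equation*}
and estimate each piece in $H^{\alpha, q}$. The paramultiplier piece is controlled by $\|e^{iF_1}\|_{L^\infty}\|J^\alpha g\|_{L^q} = \|J^\alpha g\|_{L^q}$, using the standard boundedness of $T_f$ on $H^{\alpha, q}$ for $f \in L^\infty$. For the remaining two pieces, $J^\alpha$ effectively acts on the factor $e^{iF_1}$, and a Gagliardo-Nirenberg interpolation yields
\begin{equation*}
\|D^\alpha(e^{iF_1})\|_{L^{2/\alpha}(\R)} \lesssim \|D(e^{iF_1})\|_{L^2}^\alpha \|e^{iF_1}\|_{L^\infty}^{1-\alpha} \lesssim \|u_1\|_{L^2}^\alpha.
\end{equation*}
Pairing with $\|g\|_{L^{q_1}}$, where $\tfrac{1}{q_1} = \tfrac{1}{q} - \tfrac{\alpha}{2}$, and invoking the Sobolev embedding $H^{\alpha/2, q}(\R) \hookrightarrow L^{q_1}(\R)$ (valid since $\alpha/2 \le 1/(2q) < 1/q$ under our hypothesis) converts the bound to $\|J^{\alpha/2} g\|_{L^q} \le \|J^\alpha g\|_{L^q}$. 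Finally, $\|u_1\|_{L^2}^\alpha \le 1 + \|u_1\|_{L^2}$ for $\alpha \in [0,1]$ delivers the stated estimate.

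For \eqref{lem2p7:est2}, I would apply the same Bony decomposition to $(e^{iF_1} - e^{iF_2})g$. The paramultiplier piece contributes $\|e^{iF_1} - e^{iF_2}\|_{L^\infty}\|J^\alpha g\|_{L^q}$. For the remaining pieces, the identity
\begin{equation*}
\dx(e^{iF_1} - e^{iF_2}) = i(u_1 - u_2) e^{iF_1} + iu_2(e^{iF_1} - e^{iF_2})
\end{equation*}
yields $\|D(e^{iF_1} - e^{iF_2})\|_{L^2} \le \|u_1 - u_2\|_{L^2} + \|u_2\|_{L^2}\|e^{iF_1} - e^{iF_2}\|_{L^\infty}$. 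Interpolating as above against the uniform bound $\|e^{iF_1} - e^{iF_2}\|_{L^\infty} \le 2$ and applying AM-GM to the resulting geometric mean $\|u_1 - u_2\|_{L^2}^\alpha \|e^{iF_1} - e^{iF_2}\|_{L^\infty}^{1-\alpha}$ reproduces the two terms on the right-hand side of \eqref{lem2p7:est2}, after using $\|u_2\|_{L^2} \le \|u_1\|_{L^2} + \|u_1 - u_2\|_{L^2}$ to absorb the $u_2$ dependence.

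The most delicate point is the resonant (high-high) paraproduct $R(e^{iF_1}, g)$, where both factors sit at comparable frequencies so $J^\alpha$ cannot be freely placed on either one. A Cauchy-Schwarz summation over dyadic blocks combined with the Littlewood-Paley square-function characterization of $H^{\alpha, q}$ (available since $q \ge 2$) should nonetheless allow one to reuse the same Gagliardo-Nirenberg and Sobolev embedding ingredients as in the paraproduct estimates above.
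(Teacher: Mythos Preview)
The paper does not supply its own proof of this lemma: it is quoted verbatim from \cite[Lemma~2.7]{MolinetPilodAPDE12} and used as a black box. So there is no in-paper argument to compare against; what matters is whether your outline is correct.

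Your strategy is the natural one and is essentially how such product estimates are proved. The two structural inputs you single out, namely $|e^{\pm iF_j}|\equiv 1$ and $\partial_x(e^{\pm iF_j})=\pm i u_j e^{\pm iF_j}\in L^2$, are exactly the right ones, and the Gagliardo--Nirenberg interpolation $\|D^{\alpha}(e^{iF_1})\|_{L^{2/\alpha}}\lesssim \|u_1\|_{L^2}^{\alpha}$ followed by the Sobolev embedding $H^{\alpha/2,q}\hookrightarrow L^{q_1}$ with $\tfrac{1}{q_1}=\tfrac{1}{q}-\tfrac{\alpha}{2}$ is correct under the hypothesis $\alpha\le 1/q$ (indeed this only needs $\alpha<2/q$). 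For \eqref{lem2p7:est2}, your derivative identity and the subsequent bookkeeping with Young's inequality and $\|u_2\|_{L^2}\le \|u_1\|_{L^2}+\|u_1-u_2\|_{L^2}$ do reproduce the stated right-hand side after absorbing the cross term using $\|e^{iF_1}-e^{iF_2}\|_{L^\infty}\le 2$.

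One remark: rather than splitting into three Bony pieces and treating the diagonal block $R(e^{iF_1},g)$ separately (which you correctly flag as the least transparent step), it is cleaner to invoke the Kato--Ponce fractional Leibniz rule directly,
\[
\|D^{\alpha}(fg)\|_{L^q}\lesssim \|D^{\alpha}f\|_{L^{2/\alpha}}\|g\|_{L^{q_1}}+\|f\|_{L^{\infty}}\|D^{\alpha}g\|_{L^{q}},
\]
with the $L^\infty$ endpoint (valid for $\alpha>0$ and $1<q,\,q_1<\infty$). This packages your three paraproduct estimates into a single known inequality and avoids the hand-waved square-function argument for $R$. Combined with $\|J^{\alpha}(fg)\|_{L^q}\lesssim \|fg\|_{L^q}+\|D^{\alpha}(fg)\|_{L^q}$ and the trivial bound $\|fg\|_{L^q}\le \|f\|_{L^\infty}\|g\|_{L^q}$, this yields \eqref{lem2p7:est1} and \eqref{lem2p7:est2} without further case analysis.
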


\subsection{Gauge transformation} 
We use the idea of  Tao \cite{TaoBO04}, 
namely the adaptation to the Benjamin-Ono equation 
of the Cole-Hopf transformation $u\mapsto e^{-iF}$, where $F$ is a spatial antiderivative of $u$, that transforms 
the quadratic derivative Schr\"{o}dinger equation 
$$ \dt u -i \dx^2 u =\dx(u^2)$$
into the linear Schr\"{o}dinger equation. 
However, the dispersive linear part of the Benjamin-Ono equation \eqref{BO} changes sign 
between positive and negative frequencies. 
None\-theless, 
the idea is to work with  
\begin{align}
\label{defn:W:v1}
W &:=\Pphi(e^{-i F})\,,
\end{align}
at the price of dealing with an equation for $W$ which is not in closed form, 
and subsequently inverting \eqref{defn:W:v1} is more involved than simply multiplying with $e^{iF}$. 

Since we are working at low regularity, 
we employ here the construction of the gauge transformation of Burq and Planchon 
\cite{BurqPlanchonBO} that can be carried over for $u\in C_TL^2$. 
It proceeds by constructing $F=F[u]$,  a spatial antiderivative of $u$ (i.e. $\dx F=u$), 
which also satisfies 
\begin{equation}
\label{eqF}
\dt F + \H \dx^2 F  = u^2
\end{equation}
in the sense of distributions. 
Such an $F$ is uniquely determined up to an additive constant 
More precisely, we take 
\begin{equation}
\label{defnF}
F(t,x) := \int_{\R}\psi(y) \bigg( \int_y^x u(t,z) dz \bigg) dy + G(t)\,,
\end{equation}
for some smooth, compactly supported $\psi:\R\to \R$, 
with $\int_{\R} \psi(y) dy =1$, and 
where we choose 
\begin{equation}
\label{defnG}
G(t):= \int_0^t \int_{\R} \big( -\H \psi'(y) u(t',y) + \psi(y) u(t',y)^2\big) \,dy\,dt'  \,.
\end{equation}
Note that $F$ is real-valued. 

\begin{remark}
We have that 
$e^{- iF} \in L^{\infty}(\R)$, but clearly $e^{- iF}\notin L^2(\R)$.
Hence $e^{- i F}$ is a tempered distribution on $\R$ and its Fourier transform $\widehat{e^{-i F}}$ is defined via pairing with Schwartz functions. 
Provided that we stay away from the zero frequency, i.e. $|\xi|\gtrsim 1$, 
we can make sense of 
$\widehat{e^{-iF}}(\xi)$ almost everywhere.   
Indeed, since $\dx(e^{-iF}) \in L^2(\R)$, one easily verifies that 
\begin{equation}
\label{FtreiF}
 \widehat{e^{-iF}}(\xi) = \frac{1}{i\xi}\int_{\R} e^{-ix\xi} \dx(e^{-iF}) dx  \,,
\end{equation}
 for almost every $\xi \in\R$. 
Hence, by using the Littlewood-Paley projections, 
$\Phi (e^{-iF})$,  $\PHI (e^{-iF})$, $P_{\pm \textup{hi}} (e^{-iF})$,  are well-defined $L^2(\R)$-functions. 
However, due to the possible singularity at the zero frequency which is apparent in \eqref{FtreiF}, 
$P_{\pm }(e^{-iF})$ 
might not be well-defined (unless we impose additional assumptions on $u$ itself). 
We make sense of $\Plo(e^{-iF})$, $\PLO (e^{-iF})$ not via  Littlewood-Paley projections, 
but by {defining}:  
$$\Plo (e^{-iF}) := e^{-iF} - \Phi(e^{-iF})\ ,\ \PLO (e^{-iF}):= e^{-iF} - \PHI(e^{-iF}) \,.$$ 
Still, we have $\PHI \Plo (e^{-iF}) = \PHI(e^{-iF}) -  \PHI(e^{-iF}) =0$ 
and that $\dx\Plo(e^{-iF}) = \Plo \dx(e^{-iF})$. 

Similarly, for $F$ itself we do not have information about its decay at spatial infinity,  
we only know  that $\dx F=u\in H^s_x(\R)$. 
Thus, $\Phi F, \PHI F, P_{\pm \textup{hi}}F$ are well-defined, whereas $P_{\pm} F$ might not be. 
\end{remark}


\begin{remark}
If $u$ is a solution to \eqref{BO} on $[0,T]$, i.e.  
$$u(t)= e^{-t\H \dx^2} u_0 + \int_0^t e^{-(t-t')\H \dx^2} \dx(u(t')^2) dt' \,,$$
in the sense of spatial distributions, for all $t\in [0,T]$, 
then $F=F[u]$ constructed via \eqref{defnF} is a solution to 
$\dt F+\H \dx^2F = (\dx F)^2$, 
i.e. 
\begin{equation*}
F(t) = e^{-t\H \dx^2 } F_0 + \int_0^t e^{-(t-t') \H\dx^2} \big(\dx F(t')\big)^2 dt' \,,
\end{equation*}
in the sense of spatial distributions, for all $t\in [0,T]$, 
where $F_0(x):= \int_{\R} \psi(y) \int_y^x u_0(z) dz dy$. 
\end{remark}

The following is a variant of {\cite[Lemma~4.1]{MolinetPilodAPDE12}} stated for two solutions with the same initial data.

\begin{lemma}
\label{lem:diffFs}
Assume that $u_1, u_2\in C_TL^2$ are  two solutions to \eqref{BO} on $[0,T]$ 
emanating from initial data $u_{1,0}$, $u_{2,0}$ having the same low frequency part, \textit{i.e.} $P_{lo}u_{1,0}=P_{lo}u_{2,0}$. 
Let $F_1, F_2$ denote the corresponding spatial antiderivatives of $u_1, u_2$ satisfying \eqref{eqF} 
(as per the construction above).  
Then$\|F_1|_{t=0} - F_2|_{t=0}\|_{L^{\infty}} \lesssim \|u_{1,0}-u_{2,0}\|_{L^2}$ and 
\begin{align}
\|F_1-F_2\|_{C_TL^{\infty}} &\lesssim \jb{T} 
	 \left(\|u_1\|_{C_TL^2} + \|u_2\|_{C_TL^2}\right) \|u_1 - u_2\|_{C_TL^2}\,.
\end{align}
\end{lemma}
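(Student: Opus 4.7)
My plan is to derive the linear equation obeyed by $f := F_1-F_2$ and combine the Duhamel representation with the $L^1\to L^\infty$ dispersive estimate for the free Benjamin-Ono group. Note that a direct pointwise bound of $F_1-F_2$ from \eqref{defnF}-\eqref{defnG} cannot succeed, since a generic spatial antiderivative of an $L^2$ function need not be bounded; the $L^\infty$-control on $f$ is genuinely a dispersive consequence.

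The identity $F_1|_{t=0}=F_2|_{t=0}$ follows immediately from \eqref{defnF}-\eqref{defnG} evaluated at $t=0$, as both sides involve only $u_0$ and the common additive constant. Subtracting the equations \eqref{eqF} for $j=1,2$, the difference $f$ solves in the sense of tempered distributions
\begin{equation*}
\dt f + \H\dx^2 f = (u_1+u_2)(u_1-u_2), \qquad f|_{t=0}=0,
\end{equation*}
whose right-hand side lies in $C([0,T];L^1(\R))$ by H\"older. Since $e^{-t\H\dx^2}$ is a unitary Fourier multiplier on $\mathcal{S}'(\R)$, uniqueness of the distributional solution to the linear inhomogeneous BO equation with zero initial data identifies $f$ with the Duhamel integral
\begin{equation*}
f(t) = \int_0^t e^{-(t-s)\H\dx^2}\bigl[(u_1+u_2)(u_1-u_2)\bigr](s)\,ds.
\end{equation*}

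I would then invoke the dispersive estimate
\begin{equation*}
\bigl\|e^{-t\H\dx^2}g\bigr\|_{L^\infty_x}\lesssim |t|^{-1/2}\|g\|_{L^1_x},
\end{equation*}
which follows either by stationary phase applied to the kernel $\int_{\R} e^{i(x\xi-t\xi|\xi|)}\,d\xi$, or by splitting into positive and negative frequencies and using the classical Schr\"odinger dispersive bound on each half. Combined with $\|u_1^2-u_2^2\|_{L^1}\leq\|u_1+u_2\|_{L^2}\|u_1-u_2\|_{L^2}$, the mass conservation $\|u_j(t)\|_{L^2}=\|u_0\|_{L^2}$ (which holds for $C_TL^2$ Duhamel solutions to BO by the standard smooth-approximation argument), and $\int_0^t(t-s)^{-1/2}\,ds = 2t^{1/2}$, this yields
\begin{equation*}
\|f\|_{C_TL^\infty}\lesssim T^{1/2}\|u_0\|_{L^2}\|u_1-u_2\|_{C_TL^2}\lesssim \jb{T}\|u_0\|_{L^2}\|u_1-u_2\|_{C_TL^2}.
\end{equation*}

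The step requiring the most care, in my view, is checking that each $F_j$ constructed via \eqref{defnF}-\eqref{defnG} really does satisfy \eqref{eqF} distributionally, so that the subtracted equation for $f$ is valid in $\mathcal{S}'(\R)$. This is carried out exactly as in Burq-Planchon \cite{BurqPlanchonBO}: one differentiates the explicit formula under the integral signs using that $u_j$ solves BO distributionally, and the particular choice of $G_j$ in \eqref{defnG} is designed precisely so that the resulting time-boundary contributions cancel out, producing the desired identity.
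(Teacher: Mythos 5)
The paper leaves this lemma unproved, simply deferring to \cite[Lemma~4.1]{MolinetPilodAPDE12}, so there is no in-text argument to compare against; your reconstruction via Duhamel plus the $L^1\to L^\infty$ dispersive bound for $e^{-t\H\dx^2}$ is correct and is the natural route (and indeed essentially the route one finds in that reference). The key observations — that $f=F_1-F_2$ has vanishing initial data and solves the inhomogeneous linear BO equation with forcing $u_1^2-u_2^2\in C_TL^1$, and that the time-singularity $(t-s)^{-1/2}$ is integrable — are exactly right, and your resulting factor $T^{1/2}$ is even slightly sharper than the stated $\jb{T}$. You also correctly flag the two delicate points: verifying that the Burq--Planchon $F_j$ really solves \eqref{eqF} distributionally so that Duhamel applies, and invoking $L^2$-conservation for $C_TL^2$ Duhamel solutions (the paper uses the same fact in the proof of Lemma~\ref{lem:diffinus}, so this is at the level of rigor the paper adopts). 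One small inaccuracy: your parenthetical alternative derivation of the dispersive estimate by ``splitting into positive and negative frequencies and using the classical Schr\"odinger bound on each half'' does not quite work as stated, since $P_{\pm}$ are not bounded on $L^1$ and so cannot be applied to the data before invoking the Schr\"odinger kernel bound; the splitting has to be done at the level of the half-line Fresnel integrals $\int_0^{\pm\infty}e^{i(x\xi\mp t\xi^2)}\,d\xi$, each of which is bounded by $|t|^{-1/2}$ via Van der Corput. Your first suggestion (stationary phase / Van der Corput applied directly to the full BO kernel) is the clean way to do it.
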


Straightforward computations give the following equation for $W$: 
\begin{equation} 
\dt W 
-i\partial_x^2W= -2 \Pphi\big[ \big(\Pphi e^{-iF}\big) \big(P_{-} \dx^2 F \big)\big] 
  - 2\Pphi\big[ \big(\Plo e^{-iF}\big) \big(P_{-} \dx u \big)\big] \,.
\label{eqW}
\end{equation}
Note that $\big(\Pmhi e^{-iF}\big) \big(P_{-} \dx^2 F \big)$ vanishes under $\Pphi$. 
Also, by Lemma~\ref{MP:lem2p7}, if $u(t)\in H^s$ then we have $W(t)\in H^{s+1}$, 
for any $0\le s\le \frac12$.  


However, as in  \cite{MolinetPeriodicBOenergysp}, 
we prefer to work at the $H^s$-level, namely we consider 
\begin{align}
\label{defn:w:v1}
w &:= \dx W 
\end{align} 
and thus the Benjamin-Ono equation becomes
\begin{align}
\label{eqw}
\dt w - i\dx^2 w & 
	= -2 \Pphi\dx\big[\dx^{-1}w \cdot P_-\dx u\big] 
	   -2\Pphi \dx \big[ \big(\Plo e^{-iF}\big) \big(P_{-} \dx u \big)\big] 
\,.
\end{align}
The difficult term on the right-hand side is the first term and note that 
 its first factor, i.e. $\dx^{-1}w$, necessarily has larger frequency than the second factor. 
 Recall that due to the restriction of $w$ to positive high frequencies, 
  $\partial_x^{-1}w $ is defined as the Fourier multiplier with  symbol $(i\xi)^{-1}$; 
  this is not the case  for $u$ itself, hence the need to carefully construct its antiderivative $F$.

The second term on the right-hand side of \eqref{eqw} is negligible in the sense that we are essentially dealing with a quadratic term involving two smooth factors. Indeed, the estimate for the difference of two such terms is straightforward and it is given by the following lemma.

\begin{lemma}[estimate for the negligible term in \eqref{eqw}]
\label{est:negl1:v1}
Let $\sigma\ge 0$, $u_1, u_2\in L^2$ and denote 
\begin{equation*}
E(f,g) :=-2 \Pphi \dx \big[ \big(\Plo f\big) \big(P_{-} \dx g \big)\big] \,.
\end{equation*} 
Then, we have 
\begin{align}
\label{est:lem2p7}
&\Big\|E\big(e^{-iF_1} , u_1\big) - E\big(e^{-iF_2}, u_2\big) \Big\|_{H^{\sigma}} 
\lesssim \|u_1\|_{L^2} \|F_1-F_2\|_{L^{\infty}} + \|u_1 -u_2\|_{L^2} \,.
\end{align}
\end{lemma}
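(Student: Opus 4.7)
The key observation is that $E$ has built-in frequency localization. Writing $E(f,g)=-2\Pphi\dx[(\Plo f)(P_{-}\dx g)]$ on the Fourier side, the output frequency $\xi$ satisfies $\xi\gtrsim 1$ (from $\Pphi$), while $\xi=\xi_1+\xi_2$ with $|\xi_1|\le 2$ (from $\Plo$) and $\xi_2<0$ (from $P_{-}$). These three constraints force $\xi\in[1,2]$ and $\xi_2\in[-1,0)$, up to cutoff constants. Consequently, the output of $E$ has Fourier support in a fixed bounded annulus, which yields
\[
\|E(f,g)\|_{H^\sigma}\lesssim_\sigma \|E(f,g)\|_{L^2}\quad\text{for every }\sigma\ge 0,
\]
and allows us to harmlessly replace $P_{-}\dx g$ inside by $\chi(D)P_{-}\dx g$ for a smooth cutoff $\chi$ supported near the origin. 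Since $\chi(\xi)\xi$ is a bounded symbol, Bernstein gives $\|\chi(D)P_{-}\dx g\|_{L^\infty}\lesssim \|\chi(D)P_{-}\dx g\|_{L^2}\lesssim \|g\|_{L^2}$.

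With this in hand, I would split the difference in the standard way,
\[
E(e^{-iF_1},u_1)-E(e^{-iF_2},u_2)=E\big(e^{-iF_1}-e^{-iF_2},\,u_1\big)+E\big(e^{-iF_2},\,u_1-u_2\big),
\]
and estimate each term by an $L^\infty\cdot L^2$ H\"older pairing based on the compactness of the Fourier support identified above, namely
\[
\|E(f,g)\|_{H^\sigma}\lesssim \|\Plo f\|_{L^\infty}\,\|g\|_{L^2}\lesssim \|f\|_{L^\infty}\,\|g\|_{L^2},
\]
using that $\Plo$ is bounded on $L^\infty$.

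For the first term this produces $\|e^{-iF_1}-e^{-iF_2}\|_{L^\infty}\|u_1\|_{L^2}$, and since $F_1,F_2$ are real valued the elementary pointwise bound $|e^{ia}-e^{ib}|\le |a-b|$ upgrades this to $\|F_1-F_2\|_{L^\infty}\|u_1\|_{L^2}$. For the second term, $\|e^{-iF_2}\|_{L^\infty}=1$ (again by realness of $F_2$), giving the remaining contribution $\|u_1-u_2\|_{L^2}$. Summing the two yields \eqref{est:lem2p7}.

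I do not expect any serious obstacle: everything reduces to correctly carrying out the three-way frequency arithmetic that bounds $\xi$, $\xi_1$, $\xi_2$ in a fixed compact set, which is exactly the structural reason this term is labelled ``negligible'' — both factors $\Plo e^{-iF}$ and the only relevant part of $P_{-}\dx u$ are smooth, low-frequency objects, so no dispersive or normal-form tool is needed.
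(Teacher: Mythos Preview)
Your proposal is correct and is essentially the same argument as the paper's: the paper phrases the frequency arithmetic as ``we can insert two $\PLO$ operators,'' which encodes exactly your observation that the constraints $\xi\gtrsim 1$, $|\xi_1|\le 2$, $\xi_2<0$ force both the output and the factor $P_{-}\dx g$ into a fixed low-frequency window, after which both proofs split the difference, apply an $L^\infty\times L^2$ H\"older estimate, and use $|e^{-iF_1}-e^{-iF_2}|\le |F_1-F_2|$.
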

\begin{proof}
We can insert two $\PLO$ operators, namely we have 
$$E(f,g) =-2 \PLO \Pphi \dx \big[ \big(\Plo f\big) \big(\PLO P_{-} \dx g \big)\big] \,,$$
 and thus 
 \begin{align*}
&\Big\|E\big(e^{-iF_1} , u_1\big) - E\big(e^{-iF_2}, u_2\big) \Big\|_{H^{\sigma}}  \\
&\ \lesssim \Big\| \Plo  \Big(e^{-iF_1}- e^{-iF_2}\Big) \PLO P_{-}\dx u_1 
 + \Plo \Big(e^{-iF_2}\Big) \PLO P_{-}\dx \big(u_1-u_2 \big) \Big\|_{L^2} \\
 &\ \lesssim \big\|\Plo\big(e^{-iF_1}- e^{-iF_2}\big)\big\|_{L^{\infty}} \|\PLO P_-\dx u_1\|_{L^2} 
      + \big\|\Plo\big(e^{-iF_2}\big)\big\|_{L^{\infty}} \|\PLO P_-\dx (u_1-u_2)\|_{L^2} \\
  &\ \lesssim \|F_1-F_2\|_{L^{\infty}} \|u_1\|_{L^2} + \|u_1-u_2\|_{L^2} \,.
\end{align*}

\end{proof}

\begin{remark}
Formally (i.e. for smooth solutions or for limits of smooth solutions), one can verify that \eqref{eqW}-\eqref{eqw} hold by straightforward computations. For a low-regularity $C_TL^2$-solution $u$ to \eqref{BO}, one can proceed as in \cite[Section~2]{KishimotoBO} to justify that the gauge transformation $w$ is a solution to \eqref{eqw} in the sense of distribution. 
More precisely,  for any dyadic number $N \ge 1$, we use the truncation $u_{\le N}:= P_{\le N}u$ and define its spatial antiderivative $F_{\le N}$ by 
\begin{equation*}
F_{\le N}=F(t,x) := \int_{\R}\psi(y) \bigg( \int_y^x u_{\le N}(t,z) dz \bigg) dy + G_{\le N}(t)\
\end{equation*}
where $G_{\le N}(t):= \int_0^t \int_{\R} \big( -\H \psi'(y) u_{\le N}(t',y) + \psi(y) P_{\le N}\big(u(t',y)^2\big)\big) \,dy\,dt' $.  Then, $\partial_xF_{\le N}=u_{\le N}$ and if we define $H_{\le N}:=P_{\le N}(u^2)-u_{\le N}^2$, the equation 
\begin{equation*}
\partial_t F_{\le N}+\mathcal{H}\partial_x^2F_{\le N}=u_{\le N}^2+H_{\le N}  
\end{equation*}
holds in the classical sense. It follows that $w_{\le N}:=\dx \Pphi(e^{-iF_{\le N}})$ satisfies 
\begin{equation} \label{eq:wN}
\begin{split}
\dt w_{\le N} - i\dx^2 w_{\le N} & 
	= -2 \Pphi\dx\big[\dx^{-1}w_{\le N} \cdot P_-\dx u_{\le N}\big] -i\Pphi\dx\big[e^{-i F_{\le N}}H_{\le N}\big] \\ 
	   & \quad +E(e^{-iF_{\le N}}, u_{\le N}) 
\end{split}
\end{equation}
also in the classical sense. 
Now observe from Lemma \ref{lem:diffFs} that $(u_{\le N},w_{\le N},F_{\le N}) \to (u,w,F)$ in $C_T(L^2 \times L^2 \times L^{\infty})$. Moreover, $H_{\le N} \to 0$ in $C_TH^{-1}$ as $N \to +\infty$. Therefore, it follows from the estimate (see \cite[Section~2]{KishimotoBO})
\[ \big\|\Pphi\big[\dx^{-1}w \cdot P_-\dx u\big]\|_{H^{-1}} \lesssim \|w\|_{L^2}\|u\|_{L^2} \]
and Lemma \ref{est:negl1:v1} that the right-hand side of \eqref{eq:wN} converges to the right-hand side of \eqref{eqw} in $C_TH^{-2}$, which justifies that $w$ solves \eqref{eqw} in the distributional sense. 
\end{remark}

\subsection{Strichartz estimates}
We recall here that for $u$ a solution to 
\begin{equation} \label{BO:lin}
\dt u +\H \dx^2 u = F \,, \ u|_{t=0}=u_0
\end{equation}
we have the classical Strichartz estimates: 
\begin{equation} \label{strich:BO}
\|u\|_{L_t^pL_x^q} \lesssim \|u_0\|_{L_x^2}  + \|F\|_{L_t^1L_x^2} \,,
\end{equation}
for any Strichartz admissible pair, i.e.  $\frac2p+\frac1q =\frac12$ with $4\le p\le \infty$, $2 \le q \le +\infty$. 
Next, we follow an argument of Koch-Tzvetkov in \cite{KochTzvetkovIMRN03} and Kenig-Koenig in \cite{KenigKoenig}  for the Benjamin-Ono equation of decomposing the time interval $[0,T]$ into small subintervals whose length depends on the size of the frequency of the solution.  See also Burq-G\'erard-Tzvetkov \cite{BGTajm04} for the nonlinear Schr\"{o}dinger equation on compact manifolds, and Bahouri-Chemin \cite{BCajm99}  and Tataru \cite{Tatajm00} for the wave equation. 

\begin{lemma}[refined Strichartz estimates]
\label{lem:refStrichartz}
Let $0\le s\le \frac14$, $N\in 2^{\Z}$, $N\ge 2^6$, and $T>0$.  
We  assume that $(p,q)$ is a Strichartz admissible pair and 
we denote
\begin{equation}
\label{defn:alphasp}
\alpha(s,p): =\frac{\frac32-s}{p} - s\,.
\end{equation}

\noindent
\textup{(i)} If $u$ is a solution to \eqref{BO:lin} with 
$F=\partial_x(u_1u_2 + u_3u_4)$, then we have 
\begin{equation}
\label{refinedStrichartz}
 \|P_N u\|_{L_T^pL_x^q} \lesssim 
 T^{\frac1p} N^{\alpha(s,p)} \Big( \|P_N u\|_{L_T^{\infty}H_x^s}  
 + \|u_1\|_{L_T^{\infty}H_x^s}  \|u_2\|_{L_T^{\infty}H_x^s}  
  + \|u_3\|_{L_T^{\infty}H_x^s}  \|u_4\|_{L_T^{\infty}H_x^s} \Big) \,;
\end{equation}

\noindent
\textup{(ii)} If $w$ is a solution to 
$$\dt w - i\dx^2 w = -2\Pphi \dx \big[\dx^{-1}w_1 \cdot P_{-}\dx w_2 + \dx^{-1}w_3 \cdot P_{-}\dx w_4\big] 
  +\phi\,,$$ 
where $\supp(\wh{w_1}), \supp(\wh{w_3}) \subset(2^{-3},\infty)$ and $\supp(\wh{\phi})\subset [-2^4,2^4]$, 
then we have
\begin{equation}
\label{refinedStrichartzw}
 \|P_N w\|_{L_T^pL_x^q} \lesssim 
 T^{\frac1p} N^{\alpha(s,p)} \Big( \|P_N w\|_{L_T^{\infty}H_x^s}  
 + \|w_1\|_{L_T^{\infty}H_x^s}  \|w_2\|_{L_T^{\infty}H_x^s}  + \|w_3\|_{L_T^{\infty}H_x^s}  \|w_4\|_{L_T^{\infty}H_x^s} \Big) \,.
\end{equation}
\end{lemma}

\begin{remark}
Note that $\alpha(s,p)\searrow -s$ as $p\nearrow \infty$, but for $p=\infty$ 
we can use directly the trivial estimate
$$ \|P_N u\|_{L_T^{\infty}L_x^2} 
\sim  N^{-s} \|P_N u\|_{L_T^{\infty}H_x^s}  \,.$$
The advantage of using this refinement of the Strichartz  estimate (i.e. \eqref{refinedStrichartz}) 
is evident 
when comparing it with the  estimate
\begin{equation}
\|P_N u\|_{L_T^pL_x^q} \lesssim 
 T^{\frac1p} N^{\frac2p-s}  \|P_N u\|_{L_T^{\infty}H_x^s}  \, ,
 \end{equation}
 which follows directly from the third Bernstein inequality and H\"{o}lder inequality in time. 
\end{remark}

\begin{remark} 
Since we would also like to apply these estimates for differences of two solutions, 
we stated the estimates \eqref{refinedStrichartz} and \eqref{refinedStrichartzw} with the term involving $u_3, u_4$ and $w_3,w_4$, respectively. 
\end{remark}

\begin{proof}[Proof of Lemma~\ref{lem:refStrichartz}]
With $\delta>0$ to be chosen later, 
let $I_j = :[a_j, b_j]$ be such that $\bigcup_j I_j = [0,T]$, $b_j-a_j\sim N^{-\delta}$, and 
the number of such intervals is $\sim TN^{\delta}$. 
For (i), by using \eqref{strich:BO} we get
\begin{align*}
\|P_N u\|^p_{L_T^pL_x^q}& = \sum_j \int_{a_j}^{b_j} \|P_N u\|_{L_x^q}^p dt  \\
&\lesssim  TN^{\delta} \|P_N u\|_{L_T^{\infty}L_x^2}^p + \sum_j |I_j|^{p-1}  \|P_N F\|_{L_{I_j}^pL_x^2}^p
\end{align*}
which gives us 
\begin{align*}
\|P_N u\|_{L_T^pL_x^q} &\lesssim 
 T^{\frac1p} N^{\frac{\delta}{p}} \|P_N u\|_{L_T^{\infty}L_x^2}  
 + N^{-\big( 1- \frac{1}{p}\big)\delta} \|P_NF\|_{L_T^p L_x^2} \\
 & \lesssim 
 T^{\frac1p} N^{\frac{\delta}{p}} \|P_N u\|_{L_T^{\infty}L_x^2}  
 +T^{\frac1p} N^{-\big( 1- \frac{1}{p}\big)\delta} \|P_NF\|_{L_T^{\infty}L_x^2} \,.
\end{align*}


\noindent
In particular, for 
\begin{equation*}
F=\dx(u_1 u_2 +u_3u_4)\,,
\end{equation*}
 we get
\begin{equation*}
\|P_N u\|_{L_T^pL_x^q} \lesssim 
 T^{\frac1p} N^{\frac{\delta}{p}-s} \|P_N u\|_{L_T^{\infty}H_x^s}  
 +T^{\frac1p} N^{1-\big( 1- \frac{1}{p}\big)\delta} \Big(\|P_N(u_1u_2)\|_{L_T^{\infty}L_x^2} 
  +  \|P_N(u_3u_4)\|_{L_T^{\infty}L_x^2}  \Big)\,.
\end{equation*}
Together with 
\begin{equation}
\label{est:PNprodus}
 \|P_N(u_1u_2)\|_{L_x^2} \lesssim  N^{\frac1r - \frac12} \|u_1u_2\|_{L_x^r}
   \le N^{\frac1r - \frac12}  \|u_1\|_{L_x^{2r}} \|u_2\|_{L_x^{2r}} 
 \lesssim N^{\frac1r - \frac12} \|u_1\|_{H_x^s} \|u_2\|_{H_x^s} \,,
\end{equation}
where $1\le r\le 2$ is determined by $s=\frac12 - \frac{1}{2r}$ (or equivalently, $r= \frac{1}{1-2s}$), 
and with the same estimate for $P_N(u_3 u_4)$, 
we obtain
\begin{align*}
\|P_N u\|_{L_T^pL_x^q} \lesssim 
& T^{\frac1p} N^{\frac{\delta}{p}-s} \|P_N u\|_{L_T^{\infty}H_x^s}  \\
 &\qquad  +T^{\frac1p} N^{\frac32 -\big( 1- \frac{1}{p}\big)\delta-2s} \Big( 
 \|u_1\|_{L_T^{\infty}H_x^s}  \|u_2\|_{L_T^{\infty}H_x^s}
  + \|u_3\|_{L_T^{\infty}H_x^s}  \|u_4\|_{L_T^{\infty}H_x^s}\Big) \,.
\end{align*}
Note that the restriction on $r$ restricts us to $0\le s\le \frac14$. 
We choose $\delta$ such that 
$$\frac{\delta}{p}-s = \frac32 -\big( 1- \frac{1}{p}\big)\delta-2s \,,$$ 
or equivalently $\delta = \frac32-s$, 
and with 
$\alpha(s,p):= \frac{3}{2p} - \big(1+\frac1p\big)s$
we obtain 
\eqref{refinedStrichartz}. 

To obtain \eqref{refinedStrichartzw} we argue similarly. Indeed, by using the classical Strichartz estimate 
for the linear Schr\"{o}dinger equation, we obtain as above
\begin{equation}
\label{est:firstPNwineq}
\|P_N w\|_{L_T^pL_x^q} \lesssim 
   T^{\frac1p} N^{\frac{\delta}{p}} \|P_N w\|_{L_T^{\infty}L_x^2}  
 +T^{\frac1p} N^{-\big( 1- \frac{1}{p}\big)\delta} \|P_N G\|_{L_T^{\infty}L_x^2} \,,
\end{equation}
where we set
\begin{equation*}
G =  -2\Pphi \dx \big[\dx^{-1}w_1 \cdot P_{-}\dx w_2 + \dx^{-1}w_3 \cdot P_{-}\dx w_4\big] +\phi \,.
\end{equation*}
Note that $P_N\phi = 0$ and thus with $g_j = \mathcal{F}^{-1}\big( \big| \mathcal{F}(w_j) \big| \big)$, 
$j=1,2,3,4$, we have
\begin{equation*}
\|P_NG\|_{L_x^2} \lesssim N \|P_N(g_1 g_2)\|_{L_x^2}  + N \|P_N(g_3 g_4)\|_{L_x^2} \,.
\end{equation*}
Then, similarly to \eqref{est:PNprodus}, we get
\begin{equation*}
\|P_N(g_j g_k)\|_{L_x^2}  \lesssim N^{\frac12-2s} \|g_j\|_{H_x^s} \|g_k\|_{H_x^s} 
 =  N^{\frac12-2s} \|w_j\|_{H_x^s} \|w_k\|_{H_x^s}  \,,
\end{equation*}
for $(j,k)=(1,2)$ or $(j,k)=(3,4)$. 
From \eqref{est:firstPNwineq}, we obtain 
\begin{align*}
\|P_N w\|_{L_T^pL_x^q} &\lesssim 
 T^{\frac1p} N^{\frac{\delta}{p}-s} \|P_N w\|_{L_T^{\infty}H_x^s} \\
&\qquad  +T^{\frac1p} N^{\frac32 -\big( 1- \frac{1}{p}\big)\delta-2s} \big(
 \|w_1\|_{L_T^{\infty}H_x^s}  \|w_2\|_{L_T^{\infty}H_x^s}   +
  \|w_1\|_{L_T^{\infty}H_x^s}  \|w_2\|_{L_T^{\infty}H_x^s}   \big)
\end{align*}
and thus \eqref{refinedStrichartzw} follows by choosing $\delta$ as above.

\end{proof}

\begin{lemma}
\label{lem:lemmaA}
Let  $0<s\le \frac14$, $2\le q\le 4$ such that $\left( \frac32-s\right) \left(\frac14 -\frac{1}{2q}\right) - s<0$, and $N\in 2^{\Z}$, $N\ge 2^6$. 
Suppose $u, u^{\dagger}$ are two solutions of \eqref{BO}. 
\textup{(i)} We have 
\begin{equation}
\label{est:PNdtu_diff}
\big\| P_N \dt \big(\wt{u}- \wt{u}^{\dagger} \big)\big\|_{L_T^1L_x^2}  \lesssim 
  T N^{\frac{2}{q} +\frac12} \big(1+ \|u\|_{L_T^{\infty}H^s} + \|u^{\dagger}\|_{L_T^{\infty}H^s} \big)^3
   \|u-u^{\dagger}\|_{L_T^{\infty}H^s}\,,
\end{equation}
where $\wt{u}(t) = e^{t\H \dx^2} u(t)$ and $\wt{u}^{\dagger}(t) = e^{t\H \dx^2} u^{\dagger}(t)$.\\ 
\textup{(ii)} If $w, w^{\dagger}$ are the corresponding gauge transformations of $u, u^{\dagger}$, 
we also have
\begin{gather}
\begin{split}
\label{est:PNdtw_diff}
& \big\| P_N \dt \big(\wt{w}- \wt{w}^{\dagger} \big)\big\|_{L_T^1L_x^2} \\
&\qquad  \lesssim 
  T N^{\frac{2}{q} +\frac12}  \big(1+ \|u\|_{L_T^{\infty}H^s} + \|u^{\dagger}\|_{L_T^{\infty}H^s} \big)^6
 \big( \|w-w^{\dagger}\|_{L_T^{\infty}H^s}+ \|u-u^{\dagger}\|_{L_T^{\infty}H^s} \big)\,,
\end{split}
\end{gather} 
where $\wt{w}(t) = e^{-it \dx^2} w(t)$ and $\wt{w}^{\dagger}(t) = e^{-it \dx^2} w^{\dagger}(t)$.  
\end{lemma}
\begin{proof}
We first prove \eqref{est:PNdtu_diff} for which we set $v:= u- u^{\dagger}$
The equation on the Fourier side satisfied by $\wt{v} :=  e^{t\H\dx^2} v = \wt{u} - \wt{u}^{\dagger}$  can be rewritten as
\begin{align*}
\dt \wh{\wt{v}}(t,\xi) 
 &= i \xi \int_{\xi_{12}=\xi} e^{it\Omega(\xi,\xi_1,\xi_2)}
    \big(\wh{\wt{u}}(t,\xi_1)  + \wh{\wt{u}^{\dagger}}(t,\xi_1)  \big)\wh{\wt{v}}(t,\xi_2) d\xi_1 \\
& =i \xi e^{it|\xi|\xi} \int_{\xi_{12}=\xi}   \big( \wh{u}(t,\xi_1)  + \wh{u^{\dagger}}(t,\xi_1)  \big) \wh{v}(t,\xi_2) d\xi_1 
\,.
\end{align*}
Then, by using the H\"{o}lder and Bernstein inequalities, it follows that 
\begin{gather*}
\begin{split}
\big\| P_N \dt \wt{v} \big\|_{L_T^1L_x^2} & \lesssim N \big\|P_N\big( (u +u^{\dagger})v \big) \big\|_{L_T^1L_x^2} 
 \lesssim T^{1-\frac{2}{p}} N^{\frac{2}{q} + \frac12} \big\|P_N\big( (u+u^{\dagger}) v \big)\big\|_{L_T^{\frac{p}{2}}L_x^{\frac{q}{2}}}\\
 &\lesssim  T^{1-\frac{2}{p}} N^{\frac{2}{q} + \frac12} 
   \big( \|u\|_{L_T^pL_x^q}  +  \|u^{\dagger}\|_{L_T^pL_x^q} \big) \|v\|_{L_T^pL_x^q} 
\end{split}
\end{gather*}
where $(p,q)$ is a Strichartz admissible pair, with $p$ such that $\alpha(s,p)$ is negative, 
or equivalently   $\left( \frac32-s\right) \left(\frac14 -\frac{1}{2q}\right) - s<0$. 
Note that the refined Strichartz estimates given in Lemma~\ref{lem:refStrichartz} (i) 
apply to both $u$ and $v$. Hence we obtain
\begin{gather*}
\begin{split}
\big\| P_N \dt \wt{v} \big\|_{L_T^1L_x^2} &\lesssim T N^{\frac{2}{q} + \frac12} 
 \big(1+ \|u\|_{L_T^{\infty}H^s} + \|u^{\dagger}\|_{L_T^{\infty}H^s} \big)^2 \big(1+ \|v\|_{L_T^{\infty}H^s}\big)
  \|v\|_{L_T^{\infty}H^s}\\
 &\le T N^{\frac{2}{q} + \frac12} 
 \big(1+ \|u\|_{L_T^{\infty}H^s} + \|u^{\dagger}\|_{L_T^{\infty}H^s}\big)^3\|u-u^{\dagger}\|_{L_T^{\infty}H^s}
 \,.
\end{split}
\end{gather*}
$\quad$ 
For \eqref{est:PNdtw_diff}, we set $z= w-w^{\dagger}$, $\wt{z} = e^{-it\dx^2}z = \wt{w}-\wt{w}^{\dagger}$ and $\widetilde{E}=e^{-it\dx^2}E$. 
We have
\begin{align*}
\dt \wh{\wt{z}}(t,\xi) 
 &= i \xi \int_{\xi_{12}=\xi } e^{it\Omega(\xi,\xi_1,\xi_2)} \sigma(\xi,\xi_1,\xi_2) 
   \frac{\xi_2}{\xi_1} \big( \wh{\wt{z}}(\xi_1) \wh{\wt{u}}(\xi_2)  
     + \wh{\wt{w}^{\dagger}}(\xi_1) \wh{\wt{v}}(\xi_2)\big) d\xi_1\\
  &\qquad  + \wh{\wt{E}}\big(e^{-iF} , u\big) - \wh{\wt{E}}\big(e^{-iF^{\dagger}}, u^{\dagger}\big) \\
  &= i \xi e^{it|\xi|\xi}  \int_{\xi_{12}=\xi }   \sigma(\xi,\xi_1,\xi_2) 
   \frac{\xi_2}{\xi_1} \big( \wh{z}(\xi_1) \wh{u}(\xi_2)  + \wh{w^{\dagger}}(\xi_1) \wh{v}(\xi_2)\big) d\xi_1 \\
   &\qquad  + \wh{\wt{E}}\big(e^{-iF} , u\big) - \wh{\wt{E}}\big(e^{-iF^{\dagger}}, u^{\dagger}\big) \,,
\end{align*}
where $F$ and $F^{\dagger}$ denote the corresponding spatial antiderivatives of $u$ and $u^{\dagger}$, respectively, constructed as in the previous subsection, and $\sigma(\xi,\xi_1,\xi_2)=  \chi_+(\xi)  \wt{\chi}_+(\xi_1)  \ind_{<0}(\xi_2).$ 
Since $N$ is large enough, the $E$-terms vanish under $P_N$. 
By taking into account that $|\xi_2|<\xi_1$ on the support of $\sigma$ 
and then arguing similarly as in part (i),  
we get that $\big\| P_N \dt \wt{z} \big\|_{L_T^1L_x^2}$ is controlled by 
\begin{eqnarray*}
&N\big\| P_NP_{+hi}\big(P_{+hi}\partial_x^{-1}zP_-\partial_xu\big)\|_{L^1_TL^2_x}
+N\big\| P_NP_{+hi}\big(P_{+hi}\partial_x^{-1}w^{\dagger}P_-\partial_xv\big)\|_{L^1_TL^2_x} 
\end{eqnarray*}
We only deal with the first term as the second one is estimated similarly. By taking into account that $|\xi_2|<\xi_1$ on the support of $\sigma$, performing dyadic decompositions and using H\"older's inequality, we control the first term by
\begin{align*}
   N \sum_{N_1} &\sum_{N_2 \lesssim N_1} \| P_N\Pphi \big(\Pphi\partial_x^{-1}P_{N_1}zP_-\partial_xP_{N_2}u\big)\|_{L^1_TL^2_x} \\ 
  & \lesssim  N^{\frac2q+\frac12}T^{1-\frac2p} \sum_{N_1} \sum_{N_2 
   \lesssim N_1} \|\Pphi\partial_x^{-1}P_{N_1}z\|_{L^p_TL^q_x} \|P_-\partial_xP_{N_2}u\|_{L^p_TL^q_x} \\ 
 &\lesssim N^{\frac2q+\frac12}T^{1-\frac2p} \sum_{N_1}  \sum_{N_2 \lesssim N_1} \frac{N_2}{N_1} \|P_{N_1}z\|_{L^p_TL^q_x} \|P_{N_2}u\|_{L^p_TL^q_x} \,.
 \end{align*}
We conclude the proof by using Lemma \ref{lem:refStrichartz} as above 
(the dyadic summations are finite since $\alpha(s,p)<0$) and Lemma~\ref{MP:lem2p7}.
\end{proof}

\subsection{Estimates for solutions to the original BO in terms of gauge transformations}

Here we follow the idea from  \cite[Section~4]{KishimotoBO} to establish a control  
for  $\|u_1-u_2\|_{C_TH^s}$ in terms of $\|w_1-w_2\|_{C_TH^s}$, where 
$u_1, u_2$ are two solutions to \eqref{BO} and $w_1, w_2$ are the corresponding gauge transformations.

\begin{lemma}
\label{lem:diffinus}
Let $0\le s<\frac12$, $N\in 2^{\Z_+}$, and $T>0$. 
Assume that $u_1,u_2$ are two solutions to \eqref{BO} on $[0,T]$ with the same initial data $u_0\in H^s$
and let $w_1, w_2$ be the corresponding gauge transformations of $u_1, u_2$, respectively. 
Then, we have
\begin{align}
\label{diffinus:est1}
\|P_{\le N} (u_1-u_2)\|_{C_TH^s} &\lesssim 
 TN^{\frac32+s}\big(1+ \|u_1\|_{C_TH^s} + \|u_2\|_{C_TH^s} \big)^2 \|u_1-u_2\|_{C_TL^2} \,,\\
 \begin{split}
 \label{diffinus:est2}
 \|P_{> N} (u_1-u_2)\|_{C_TH^s}
 &  \lesssim  
   \big(1+ \|u_1\|_{C_TH^s} + \|u_2\|_{C_TH^s} \big)^4\\
  &\qquad  \times  \Big( \|w_1-w_2\|_{C_TH^s}  
+  \jb{T} \big(N^{s-\frac12} +   \|P_{>\frac{N}{2}}w_2\|_{C_TH^s} \big) 
     \|u_1-u_2\|_{C_TH^s} \Big)
     \,.
 \end{split}
\end{align}
\end{lemma}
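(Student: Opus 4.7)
For \eqref{diffinus:est1}, I would proceed by a direct Duhamel estimate on the difference. Since $u_1$ and $u_2$ share the initial data $u_0$, the difference $v:=u_1-u_2$ satisfies
\[ v(t) = \int_0^t e^{-(t-t')\H\dx^2}\dx\bigl((u_1+u_2)v\bigr)(t')\,dt' \]
in the distributional sense. Applying $P_{\le N}J^s$ and using that $e^{-t\H\dx^2}$ is an $L^2$-isometry, I would combine the Bernstein inequalities $\|P_{\le N}\dx J^s g\|_{L^2}\lesssim N^{1+s}\|g\|_{L^2}$ and $\|P_{\le N}g\|_{L^2}\lesssim N^{1/2}\|g\|_{L^1}$ with the H\"older bound $\|(u_1+u_2)v\|_{L^1}\le\|u_1+u_2\|_{L^2}\|v\|_{L^2}$; integrating in time and invoking $\|u_1+u_2\|_{C_TL^2}\lesssim K$ then yields \eqref{diffinus:est1}.

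For \eqref{diffinus:est2}, the plan is to invert the gauge transformation. Starting from $\dx(e^{-iF_j})=-iu_je^{-iF_j}$, I would multiply by $e^{iF_j}$ and decompose $\dx(e^{-iF_j})=w_j+\dx\Pmhi(e^{-iF_j})+\dx\Plo(e^{-iF_j})$ to obtain
\[ u_j = ie^{iF_j}\Bigl(w_j+\dx\Pmhi(e^{-iF_j})+\dx\Plo(e^{-iF_j})\Bigr). \]
The main contribution to $u_1-u_2$ is $i(e^{iF_1}w_1-e^{iF_2}w_2)=ie^{iF_1}(w_1-w_2)+i(e^{iF_1}-e^{iF_2})w_2$. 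For the first piece, Lemma~\ref{MP:lem2p7} (with $q=2$, $\alpha=s<\tfrac12$) immediately yields $\|P_{>N}(e^{iF_1}(w_1-w_2))\|_{H^s}\lesssim K\|w_1-w_2\|_{H^s}$, accounting for the first term on the right-hand side of \eqref{diffinus:est2}. For the second piece I would perform a Bony paraproduct: the low-high interaction is bounded by $\|P_{\le N/4}(e^{iF_1}-e^{iF_2})\cdot P_{>N/2}w_2\|_{H^s}\le\|e^{iF_1}-e^{iF_2}\|_{L^\infty}\|P_{>N/2}w_2\|_{H^s}$, and after converting the $L^\infty$ norm with Lemma~\ref{lem:diffFs} this produces the $\|P_{>N/2}w_2\|_{H^s}$-type contribution; in the high-low/high-high regime I control $\|P_{>N/2}(e^{iF_1}-e^{iF_2})\|_{H^s}\lesssim N^{s-1}\|\dx(e^{iF_1}-e^{iF_2})\|_{L^2}\lesssim N^{s-1}\jb{T}K\|u_1-u_2\|_{L^2}$ via $\dx(e^{iF_1}-e^{iF_2})=iu_1(e^{iF_1}-e^{iF_2})+i(u_1-u_2)e^{iF_2}$ and Lemma~\ref{lem:diffFs}, paired with the Bernstein bound $\|P_{\le N/4}w_2\|_{L^\infty}\lesssim N^{1/2}\|w_2\|_{L^2}\lesssim N^{1/2}K$ (the last step using $w_2=-iP_{+\textup{hi}}(u_2e^{-iF_2})$ together with Lemma~\ref{MP:lem2p7}); together they give the $N^{s-1/2}\jb{T}K^2\|u_1-u_2\|_{H^s}$ contribution.

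The terms with $\dx\Pmhi(e^{-iF_j})=-i\Pmhi(u_je^{-iF_j})$ and $\dx\Plo(e^{-iF_j})=-i\Plo(u_je^{-iF_j})$ are to be treated by the same recipe. Their differences split into pieces involving either $(u_1-u_2)e^{-iF_j}$ or $u_j(e^{-iF_1}-e^{-iF_2})$, each multiplied by $e^{iF_1}$ or $e^{iF_1}-e^{iF_2}$. The spectral gap between the outer $P_{>N}$ and the inner $\Pmhi$ or $\Plo$ (which localize to $\xi\le-\tfrac12$ and $|\xi|\le 2$, respectively) forces at least one factor in each product to live at frequency $\gtrsim N$, so a Bony paraproduct combined with Bernstein extracts either $N^{s-1/2}$ or $\|P_{>N/2}w_2\|_{H^s}$ as a prefactor of $\|u_1-u_2\|_{H^s}$. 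The main obstacle I anticipate is the careful frequency bookkeeping needed to ensure every appearance of $\|u_1-u_2\|_{H^s}$ on the right-hand side carries one of these two small prefactors — both of which vanish as $N\to\infty$ (for $s<\tfrac12$ and $w_2\in C_TH^s$) — since it is precisely this smallness that will later allow those contributions to be absorbed back into the left-hand side.
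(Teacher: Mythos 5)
Your strategy matches the paper's: Duhamel plus Bernstein/H\"older for $P_{\le N}(u_1-u_2)$, and inversion of the gauge as $u_j = ie^{iF_j}\bigl(w_j+\dx\Plo(e^{-iF_j})+\dx\Pmhi(e^{-iF_j})\bigr)$ combined with paraproduct-type splittings and Lemmas~\ref{MP:lem2p7}, \ref{lem:diffFs} for $P_{>N}(u_1-u_2)$.

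There is, however, a genuine gap in your treatment of the $\Pmhi$ term. The paper first reduces to estimating $\|P_{>N}P_+(u_1-u_2)\|_{H^s}$, using that $u_1,u_2$ are real-valued so that $\|P_{>N}(u_1-u_2)\|_{H^s}\sim\|P_{>N}P_+(u_1-u_2)\|_{H^s}$. This $P_+$ is not cosmetic: since $\Pmhi$ localizes to \emph{negative} frequencies, the requirement that the outer projection pick out high \emph{positive} frequency forces $e^{iF_1}$ (or $e^{iF_1}-e^{iF_2}$) to carry frequency $\gtrsim N$, and that is what yields the Bernstein gain $N^{s-\frac12}$. With only $P_{>N}$ — which, with the paper's conventions, picks out $|\xi|\gtrsim N$ including negative $\xi$ — the output can have $\xi<-N$ with the entire size carried by $\Pmhi\dx(e^{-iF_1}-e^{-iF_2})$ while $e^{iF_1}$ sits at low frequency; then no negative power of $N$ appears, and that contribution cannot be absorbed into the left-hand side later. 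Your "spectral gap" observation that at least one factor must live at frequency $\gtrsim N$ is true, but in this regime it is the wrong factor that goes high: the gain really comes from forcing the exponential to be at high frequency, and reality of $u$ is the ingredient that guarantees this. Once you insert the $P_+$ reduction up front, your sketch lines up with the paper's proof.
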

\begin{proof}
For the low-frequency part, we use 
directly \eqref{BO:duhamel} and take the difference term by term, namely we use 
\begin{align*}
u_1-u_2 = \int_0^t e^{-(t-t')\H\dx^2} \dx\big(u_1^2-u_2^2\big)(t')\,dt' \,.
\end{align*}
By using 
the Bernstein and H\"{o}lder inequalities, we get
\begin{align*}
\|P_{\le N} (u_1-u_2)\|_{C_TH^s} 
 &\le \int_0^T \big\|P_{\le N} \dx (u_1^2-u_2^2) \big\|_{C_TH^s} dt'\\
&\lesssim TN^{\frac32+s}  \big\|P_{\le N} \big((u_1+u_2)(u_1-u_2)\big) \big\|_{C_TL^1}\\
&\lesssim TN^{\frac32+s} \big( \|u_1\|_{C_TL^2} +  \|u_2\|_{C_TL^2}\big)  \|u_1-u_2\|_{C_TL^2} \,.
\end{align*}

For the high-frequency part, we recall that since $u_1,u_2$ are real-valued, 
\begin{align*}
\|P_{>N}(u_1-u_2)\|_{C_TH^s} \sim \|P_{>N}P_+ (u_1-u_2)\|_{C_TH^s} \,.
\end{align*}
We write $u_j$ in terms of $F_j$ and $w_j$ in the following way: 
\begin{align*}
u_j & = e^{iF_j} e^{-iF_j} u_j 
    =i e^{iF_j} \dx \big[ \Pphi(e^{-iF_j}) + \Plo(e^{-iF_j})  + \Pmhi(e^{-iF_j}) \big]\\
 &=  i e^{iF_j} w_j  
    +i e^{iF_j} \Plo\dx(e^{-iF_j})  +i e^{iF_j}  \Pmhi \dx (e^{-iF_j}) \,.
\end{align*}
Therefore, we have 
\begin{align}
\nonumber
& \|P_{>N}P_+ (u_1-u_2)\|_{H^s} \\
\label{u1-u2:t1}
&\quad \leq \big\|P_{>N}P_+ \big( e^{iF_1}(w_1-w_2)\big) \big\|_{H^s} \\
\label{u1-u2:t2}
 &\qquad  + \big\|P_{>N}P_+ \big( (e^{iF_1}-e^{iF_2})w_2)\big) \big\|_{H^s} \\
\label{u1-u2:t3}
 &\qquad + \big\|P_{>N}P_+ \big( e^{iF_1} \Plo\dx (e^{-iF_1}-e^{-iF_2})\big) \big\|_{H^s} \\
\label{u1-u2:t4}
 &\qquad + \big\|P_{>N}P_+ \big( (e^{iF_1}-e^{iF_2}) \Plo\dx (e^{-iF_2})\big) \big\|_{H^s} \\
\label{u1-u2:t5}
  &\qquad + \big\|P_{>N}P_+ \big( e^{iF_1} \Pmhi\dx (e^{-iF_1}-e^{-iF_2})\big) \big\|_{H^s} \\
 \label{u1-u2:t6}
  &\qquad + \big\|P_{>N}P_+ \big( (e^{iF_1}-e^{iF_2}) \Pmhi\dx (e^{-iF_2})\big) \big\|_{H^s} \,.
\end{align}
Before estimating each term \eqref{u1-u2:t1}-\eqref{u1-u2:t6} one by one,  notice that 
by Lemma~\ref{MP:lem2p7}, 
for any $0\le \sigma\le s$ we have
\begin{align}
\nonumber
\big\| \dx \big( e^{-iF_1}- e^{-iF_2}\big)\big\|_{H^{\sigma}} &\le 
 \big\|e^{-iF_1}(u_1-u_2)\big\|_{H^{\sigma}} + 
 \big\| \big(e^{-iF_1} - e^{-iF_2}\big)u_2\big\|_{H^{\sigma}} \\
 & \lesssim  \big(1+ \|u_1\|_{H^{\sigma}} + \|u_2\|_{H^{\sigma}} \big)^2 \big( \|u_1-u_2\|_{H^{\sigma}}  +   \|F_1-F_2\|_{L^{\infty}} \big) \,.
\label{est:diffeiFs}
\end{align}

\noindent
By \eqref{lem2p7:est1},  we have 
\begin{align*}
\eqref{u1-u2:t1} \lesssim \big\|J^s\big( e^{iF_1}(w_1-w_2)\big) \big\|_{L^2}
  \lesssim \big(1+\|u_1\|_{L^2}\big)\|w_1-w_2\|_{H^s} .
\end{align*}
For the second term, we split $w_2= P_{\le \frac{N}{2}}w_2 + P_{> \frac{N}{2}}w_2$ and then we use 
Berstein's inequality, Plancherel's identity, 
\eqref{lem2p7:est2} and \eqref{est:diffeiFs} with $\sigma=0$: 
\begin{align*}
\eqref{u1-u2:t2} &\lesssim  \big\|P_{> \frac{N}{2}} J^s\big(e^{iF_1}-e^{iF_2}\big) \big\|_{L^{\infty}} 
   \big\| P_{\le \frac{N}{2}}w_2\big\|_{L^2} 
    + \big\|\big(e^{iF_1}-e^{iF_2} \big) P_{> \frac{N}{2}}w_2 \big\|_{H^s} \\
  &\lesssim N^{-(\frac12-s)} \|\dx (e^{iF_1}-e^{iF_2})\|_{L^2} \|w_2\|_{L^2}\\
 &\qquad  + \big( \|u_1-u_2\|_{L^2} + \big(1+ \|u_1\|_{H^s} + \|u_2\|_{H^s} \big)\|F_1-F_2\|_{L^{\infty}} \big)
    \big\|P_{>\frac{N}{2}} w_2\big\|_{H^s}\\
 &\lesssim \big(1+ \|u_1\|_{H^s} + \|u_2\|_{H^s} \big) \big( \|u_1-u_2\|_{L_x^2} + \|F_1-F_2\|_{L^{\infty}} \big)
  \Big(N^{s-\frac12} +  \big\|P_{>\frac{N}{2}} w_2\big\|_{H^s} \Big)\,.
\end{align*}
For the next term we can insert for free $P_{>\frac{N}{2}}P_+$ in the first factor, namely
we have 
\begin{align*}
\eqref{u1-u2:t3} &=  
 \Big\| P_{>N}P_+ \Big( P_{>\frac{N}{2}} P_+(e^{iF_1})\cdot 
    \Plo\dx \big(e^{-iF_1}-e^{-iF_2}\big)\Big) \Big\|_{H^s} \\
 &\lesssim \big\|P_{>\frac{N}{2}} P_+ J^s(e^{iF_1}) \big\|_{L^2} 
  \big\|\Plo\dx (e^{-iF_1}-e^{-iF_2}) \big\|_{L^{\infty}}\\
  &\lesssim N^{s-1}  \big\|P_{>\frac{N}{2}} P_+\dx (e^{iF_1}) \big\|_{L^2} 
   \big\| \Plo \dx(e^{-iF_1} -e^{-iF_2}) \big\|_{L^2}\\
  &\lesssim   N^{s-1} \|u_1\|_{L^2}  \big(\|u_1-u_2\|_{L^2} +\big(\|u_1\|_{H^s} + \|u_2\|_{H^s} \big) \|F_1-F_2\|_{L^{\infty}} \big) \,.
\end{align*}

\noindent
Similarly, 
we have 
\begin{align*}
\eqref{u1-u2:t4} &=
  \Big\|P_{>N}P_+ \Big(P_{>\frac{N}{2}} P_+ (e^{iF_1}-e^{iF_2}) 
   \cdot \Plo\dx (e^{-iF_2})\Big) \Big\|_{H^s}\\
 &\lesssim  N^{s-1} \|u_1\|_{L^2} \big(\|u_1-u_2\|_{L^2} 
   +\big(\|u_1\|_{H^s} + \|u_2\|_{H^s} \big) \|F_1-F_2\|_{L^{\infty}} \big) 
\end{align*}
and 
\begin{align*}
\eqref{u1-u2:t5} &\lesssim \big\|P_{>\frac{N}{2}}  P_+J^s(e^{iF_1}) \big\|_{L^{\infty}} 
  \big\|\Pmhi\dx (e^{-iF_1}-e^{-iF_2}) \big\|_{L^2}\\
  &\lesssim  N^{s-\frac12} \|u_1\|_{L^2}
   \big(  \|u_1-u_2\|_{H^s}   + \big(\|u_1\|_{H^s} + \|u_2\|_{H^s} \big) \|F_1-F_2\|_{L^{\infty}} \big) \,,
\end{align*}
where in the last step we have used \eqref{est:diffeiFs} with $\sigma=s$. 
Lastly, we argue  similarly to estimating \eqref{u1-u2:t5} by using  \eqref{est:diffeiFs} with $\sigma=0$ and we obtain: 
\begin{align*}
\eqref{u1-u2:t6} &\lesssim
 \Big\|P_{>N}P_+ \Big(P_{>\frac{N}{2}} P_+ J^s (e^{iF_1}-e^{iF_2}) 
  \cdot \Pmhi\dx (e^{-iF_2})\Big) \Big\|_{L^2}\\
&\lesssim N^{s-\frac12} \big\|\dx(e^{iF_1}-e^{iF_2})\big\|_{L^2} \|e^{-iF_2} u_2\|_{L^2}\\
  &\le   N^{s-\frac12}  \big(1+ \|u_1\|_{L^2} + \|u_2\|_{L^2} \big)^3
    \big( \|u_1-u_2\|_{L^2} +\big(\|u_1\|_{H^s} + \|u_2\|_{H^s} \big)  \|F_1-F_2\|_{L^{\infty}}\big) \,.
\end{align*}
Hence, \eqref{diffinus:est2} follows from the above estimates and Lemma~\ref{lem:diffFs}. 
\end{proof}


\section{Normal form reductions} \label{normalform:sec}
{The goal of this section is to prove an estimate for the difference of two solutions 
$w_1, w_2$ to \eqref{eqw} in terms of the difference of the corresponding solutions $u_1,u_2$ to the original equation \eqref{BO}. We proceed by renormalizing the main nonlinear term of \eqref{eqw} which introduces new nonlinearities. 
We prove multilinear estimates for these new terms in several lemmata below, which together imply the 
following proposition. 

\begin{proposition}
\label{prop:sect3}
Let $3-\sqrt{33/4}< s\le \frac14$, $T>0$, and $M>1$. 
Assume that  $u_1,u_2$ are two solutions to \eqref{BO} on $[0,T]$ with the same initial data $u_0\in H^s$. 
Then, for the  corresponding gauge transformations $w_1, w_2$, we have 
\begin{gather}
\begin{split}
\label{pfmainthm:interm}
&\| w_1- w_2\|_{C_TH^s}\\
& \ \lesssim
    \big( T M^{\frac32} + M^{-\frac{1}{16}} \big)  
    \big(1+\|u_1\|_{C_{T}H^s} + \|u_2\|_{C_{T}H^s}\big)^{10}
    \big( \| w_1- w_2\|_{C_TH^s} + \| u_1- u_2\|_{C_TH^s} \big)
     \, .
\end{split}
\end{gather}
\end{proposition}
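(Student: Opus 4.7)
The plan is to analyze the Duhamel formulation for $v:=w_1-w_2$. Because $u_1,u_2$ share the initial data $u_0$, Lemma~\ref{lem:diffFs} gives $F_1|_{t=0}=F_2|_{t=0}$ and hence $w_1|_{t=0}=w_2|_{t=0}$. From \eqref{eqw}, $v$ satisfies a Schr\"odinger-type equation whose forcing consists of three pieces: (i) the main bilinear term $-2\Phi\dx[\dx^{-1}v\cdot P_-\dx u_1]$, (ii) the cross bilinear term $-2\Phi\dx[\dx^{-1}w_2\cdot P_-\dx(u_1-u_2)]$, and (iii) the difference of the negligible terms from \eqref{eqw}, which is directly controlled via Lemma~\ref{est:negl1:v1} combined with Lemma~\ref{lem:diffFs} and yields a contribution absorbed by the $TM^{3/2}\wt K^{10}\|u_1-u_2\|_{C_TH^s}$ term. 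The focus is thus on (i) and (ii). Passing to the Van der Pol variables \eqref{VdP} and dyadically decomposing each factor, the key move is to split the bilinear Fourier integral into two regions, $\{|\Omega|\lesssim M\}$ and $\{|\Omega|\gtrsim M\}$; under the sign restrictions imposed by $P_-$ and $\Phi$ we have $\Omega(\xi,\xi_1,\xi-\xi_1)=2\xi(\xi-\xi_1)$, so this is effectively a restriction on the product of the output and low-factor frequencies.

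On the low-resonance region I would estimate the Duhamel integral pointwise in time, losing a factor $T$ but only a power $M^{3/2}$ after Bernstein, producing the $TM^{3/2}\wt K^{2}(\|v\|_{C_TH^s}+\|u_1-u_2\|_{C_TH^s})$ contribution of \eqref{pfmainthm:interm}. On the high-resonance region I integrate by parts in time, using the algebraic identity $\frac{2}{\Omega}\cdot\frac{\xi(\xi-\xi_1)}{\xi_1}=\frac{1}{\xi_1}$ to remove one derivative from the resulting bilinear form. The boundary terms at $t'=0$ and $t'=t$ are then estimated directly; the gain $|\Omega|^{-1}\lesssim M^{-1}$ combined with the derivative losses from Bernstein and from dyadic summation leaves a net decay which, after optimizing exponents, produces the $M^{-1/16}\wt K^{10}$ factor appearing in \eqref{pfmainthm:interm}.

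The time-integral term produced by the integration by parts yields trilinear expressions once $\dt\wt u$ and $\dt\wt w$ are substituted from \eqref{dtwtu}--\eqref{dtwtw}; these are analogs of $\mathcal N_1^{(2)}(w,u,u)$ and $\mathcal N_2^{(2)}(w,u,u)$ in \eqref{BOto cubicNLS}, adapted to the difference equation. The $\mathcal N_1^{(2)}$-type term has a sign-definite resonance and is estimated in $C_TH^s$ using H\"older in time combined with the refined Strichartz bounds \eqref{refStrich4infty}--\eqref{refStrich123}. The $\mathcal N_2^{(2)}$-type term requires a second normal form reduction, again with the $\{|\Omega|\lessgtr M\}$ split: the low-resonance piece is controlled via the $L^{12}L^3$ refined Strichartz estimate \eqref{refStrich123}, the new boundary terms gain an additional $M^{-1}$, and the resulting quadrilinear terms are treated in the same framework. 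The main obstacle is the delicate balancing of derivative losses arising from two iterations against the gains afforded by the refined Strichartz exponent $\alpha(s,p)$ from \eqref{defn:alphasp}; tracking these losses through all dyadic summations is exactly what produces the quadratic restriction on $s$ and forces $s\ge \tfrac17$. Assembling every piece with the $\wt K^{10}$-type multiplicative bounds, using Lemma~\ref{lem:diffinus} whenever a difference of $u$'s needs to be re-expressed, closes the desired estimate \eqref{pfmainthm:interm}.
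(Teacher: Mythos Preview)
Your overall architecture---two normal-form iterations with a threshold $M$, refined Strichartz on the highest-order terms---matches the paper's proof, which simply gathers Lemmata~\ref{lem:1stresonant}, \ref{lem:bdrystep1}, \ref{lem:N2leM}, \ref{lem:bdryterms2ndstep}, and \ref{lem:mainnonlinests}. But there is a genuine gap in how you handle $\mathcal{N}^{(2)}_1$. You say this term ``has a sign-definite resonance and is estimated in $C_TH^s$ using H\"older in time combined with the refined Strichartz bounds,'' in contrast to $\mathcal{N}^{(2)}_2$ which ``requires a second normal form reduction.'' This is not what the paper does and it will not close for $s\le\tfrac14$: the multiplier $\mathfrak{m}^{(2)}_1$ is merely bounded (see \eqref{m21}), so you are left with a genuinely cubic expression with no smoothing, and as stated in the introduction the direct $H^s$-estimate on such cubic terms only holds for $s>\tfrac14$. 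Refined Strichartz alone does not rescue this---a back-of-the-envelope check with three factors and admissible pairs still leaves a positive power of the high frequency at $s=\tfrac17$. The sign-definiteness of $\Omega^{(2)}_1$ is not a license to skip the normal form; it simply means the second integration by parts needs no extra $M$-splitting. In the paper \emph{all three} of $\mathcal{N}^{(2)}_1,\mathcal{N}^{(2)}_2,\mathcal{N}^{(2)}_3$ undergo the second step, producing nine quartic terms $\mathcal{N}^{(3)}_{j,k}$, and it is only on those quartic terms that the refined Strichartz estimate is deployed (Lemma~\ref{lem:mainnonlinests}); the quadratic condition $s^2-6s+\tfrac34<0$ that forces $s\ge\tfrac17$ emerges precisely there.

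Two smaller points. First, the $M^{3/2}$ factor does not arise from the first-step low-resonance region as you write (that gives only $M$, Lemma~\ref{lem:1stresonant}); it comes from the trilinear resonant piece $\mathcal{N}^{(2)}_{\le M}$ at the second step (Lemma~\ref{lem:N2leM}), which is estimated by elementary means, not by the $L^{12}L^3$ Strichartz bound. Second, Lemma~\ref{lem:diffinus} plays no role in this proposition: it bounds $\|u_1-u_2\|$ in terms of $\|w_1-w_2\|$, which is the wrong direction here, and is only used later in the proof of Theorem~\ref{mainthm} to combine with \eqref{pfmainthm:interm}. Within Proposition~\ref{prop:sect3} all differences of $u$'s simply remain as $\|u_1-u_2\|_{C_TH^s}$ on the right-hand side; the multilinear lemmata are stated for generic arguments precisely so that telescoping yields the difference estimate directly.
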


Recall that after the gauge transformation $u\mapsto w =\dx P_{+\text{hi}} (e^{-iF})$, 
BO transforms  into 
\begin{align}
\label{eqw-1}
\dt w - i\dx^2 w & 
	= -2\Pphi\dx\big[\dx^{-1}w \cdot P_-\dx u\big] + E(e^{-iF},u)
\end{align}
(see \eqref{eqw}), where the second term, given by 
$E(e^{-iF},u) = -2\Pphi \dx \big[(\Plo e^{-iF})  (P_{-}\dx u)\big]$, 
is easy to handle via Lemma~\ref{est:negl1:v1}. 
For simplicity of writing we drop the functional arguments for this negligible term, 
i.e. we set $E: =E(e^{-iF},u)$. 
Here we  use the following change of variables:  
\begin{align}
\label{defn:wtw}
\wt w(t) &:= e^{-it\dx^2} w(t ) \,, \\
\label{defn:wtu}
\wt u(t) &:= e^{t\H\dx^2}u(t) \,,\\
\label{defn:wtE}
\wt E(t) &= e^{-it\dx^2} E(t) \,.
\end{align}
Then 
$\wt w$ satisfies pointwise in time the following equation in $H^s_x$
\begin{equation}
\label{eqwtildehat}
\wt w(t) - \wt w(0) = \int_0^t {\mathbf{N}^{(1)}}(\wt w, \wt u)(t')dt' \,,
\end{equation}
where the nonlinearity is defined  by
\begin{equation}
\begin{split}
\label{defn:bfN1}
 \mathcal{F} \big(\mathbf{N}^{(1)}(\wt w, \wt u) \big)(t,\xi) 
 &= -2i    \int_{\xi_{12}=\xi} e^{it\Omega(\xi,\xi_1,\xi_2)} \frac{\xi \xi_2}{\xi_1} 
   \sigma(\xi,\xi_1,\xi_2) \wh{\wt w}(t,\xi_1) \wh{\wt u}(t,\xi_2) d\xi_1 \\
   &\qquad  +  \widehat{\wt E}(t,\xi)  \,.
\end{split}
\end{equation}


In \eqref{defn:bfN1} above we have set  
\begin{align}
\label{defn:Omega}
\Omega(\xi,\xi_1,\xi_2) &:= \omega(\xi)-\omega(\xi_1)-\omega(\xi_2)  = \xi|\xi| - \xi_1 |\xi_1| -\xi_2|\xi_2| \,,\\
\label{defn:sigma}
\sigma(\xi,\xi_1,\xi_2) &:=  \chi_+(\xi)  \wt{\chi}_+(\xi_1)  \ind_{<0}(\xi_2) \,,
\end{align}
where
\begin{equation}
\chi_+(\xi) := (1-\psi(\xi))\ind_{>0}(\xi)
\end{equation}
is the symbol of $\Pphi$. Also, we inserted $\wt{\chi}_+(\xi_1)$, where 
$\wt{\chi}_+$ is a smooth function equal to $1$ on the support of $\chi_+$ and vanishing on a neighborhood of zero. 
Since $\chi_+$ and $\wt{\chi}_+$ play the same role (they indicate positive frequencies away from zero) we make a slight abuse of notation and replace $\wt{\chi}_+$ by $\chi_+$ 
in every occurrence below. 

Due to the sign restrictions on the frequencies $\xi,\xi_1,\xi_2$, 
we have the following factorization on the convolution plane $\xi=\xi_1+\xi_2$: 
\begin{equation}
\label{fact:Omega}
\Omega(\xi,\xi_1,\xi_2) 
= 2\xi\xi_2 \,.
\end{equation}
We note that the phase \eqref{fact:Omega} is signed, namely $\Omega(\xi,\xi_1,\xi_2) <0$. 
Also, 
we have 
\begin{equation}
\label{xi<xi1}
\jb{\xi}\sim |\xi|=\xi =\xi_1 +\xi_2<\xi_1=|\xi_1| \sim \jb{\xi_1} 
\end{equation}
and
\begin{equation}
\label{xi2<xi1}
|\xi_2|=-\xi_2 = \xi_1-\xi < \xi_1 \,. 
\end{equation}
We also rewrite here \eqref{BO:duhamel} on the Fourier side, namely we have 
\begin{align}
\label{equtildehat}
 \wh{\wt u}(t,\xi) - \wh{u_0}(\xi) 
 &= i \xi \int_0^t  \int_{\xi_{12}=\xi} e^{it'\Omega(\xi,\xi_1,\xi_2)}  \wh{\wt u}(t',\xi_1) \wh{\wt u}(t',\xi_2) d\xi_1 dt'\,,
\end{align}
with $\Omega(\xi,\xi_1,\xi_2)$ as in \eqref{defn:Omega} 
(there is no  factorization 
since there is no additional information on the signs of the frequencies involved). 

\begin{lemma}
\label{lem:C1nessint}
Let $u\in C_TL_x^2$ be a solution to \eqref{BO}, let $w$ be the corresponding gauge transformation, 
and let $\wt{u}, \wt{w}$ be given by \eqref{defn:wtu}, \eqref{defn:wtw}, respectively. 
Then for fixed $\xi$, the functions 
$$ :t\mapsto   \wh{u}(t, \xi) ,\   \wh{w}(t, \xi) ,\ \wh{\wt{u}}(t, \xi) , \ \wh{\wt{w}}(t, \xi) $$
are continuously differentiable. 
\end{lemma}
\begin{proof} We argue here for the last function, the others follow analogously. 
We claim that 
\begin{align*}
:t\mapsto \int_{\xi=\xi_{12}} e^{it\Omega(\xi,\xi_1,\xi_2)} \frac{\xi_2}{\xi_1} \sigma(\xi,\xi_1,\xi_2) 
 \wh{\wt{w}}(t,\xi_1) \wh{\wt{u}}(\xi_2)d\xi_1
\end{align*}
is continuous, which combined to \eqref{eqwtildehat}-\eqref{defn:bfN1} proves that $:t \mapsto \wh{\wt{w}}(t, \xi)$ is continuously differentiable. 
Indeed, 
since $:t\mapsto w(t,\cdot) \in L^2$ and $:t\mapsto u(t,\cdot) \in L^2$ are continuous,
the claim follows from Lebesgue's dominated convergence theorem and  the estimate 
\begin{equation}
\int_{\xi=\xi_{12}} \frac{|\xi_2|}{\xi_1} \sigma(\xi,\xi_1,\xi_2) |\wh{\wt{w}}(t,\xi_1)| 
   |\wh{\wt{u}}(t,\xi_2)|  d\xi_1  \lesssim \|w(t,\cdot)\|_{L_x^2}  \|u(t,\cdot)\|_{L_x^2} 
 \,.
\end{equation}
\end{proof}

\subsection{First step}
Let us consider the main term in \eqref{eqwtildehat}. We denote by ${\mathcal{N}^{(1)}}$ the bilinear operator given by: 
\begin{align}
 \label{defn:Ncal}
 \mathcal{F} \big({\mathcal{N}^{(1)}}(\wt w,\wt u) \big)(t,\xi) =  
 -2i    \int_{\xi=\xi_{12}} e^{it\Omega(\xi,\xi_1,\xi_2)} \frac{\xi \xi_2}{\xi_1} 
    \chi_+(\xi)  \chi_+(\xi_1)  \ind_{<0}(\xi_2) \wh{\wt w}(t,\xi_1) \wh{\wt u}(t,\xi_2) d\xi_1 \,.
\end{align}

Note that the difference between  ${\mathbf{N}^{(1)}}(\wt w, \wt u)$ 
and 
${\mathcal{N}^{(1)}}(\wt w, \wt u)$ 
is the negligible term $\wt E$.  
Next, we split 
 \begin{equation}
 \label{splitNcal}
{\mathcal{N}^{(1)} }=   {\mathcal{N}^{(1)}_{\le M}} +  {\mathcal{N}^{(1)}_{>M}} \,,
 \end{equation}
 where the two terms on the right-hand side are defined similarly to \eqref{defn:Ncal}, 
 with the multiplier including the indicator function for 
 $|\Omega(\xi,\xi_1,\xi_2)|\le M$ and $|\Omega(\xi,\xi_1,\xi_2)|> M$, respectively. 
 
 \begin{remark}
 \label{rmk:capitalVj}
 We prove the estimates in multilinear form since in the end we need an estimate for the difference of two solutions. 
 Thus we use $v_1$, $v_2$ in place of $\wt{w}(t)$ and $\wt{u}(t)$. 
 Also, for the proofs we find it useful to introduce here the notation: 
 \begin{equation}
 \label{defn:capitalVj}
 V_j:=  \mathcal{F}^{-1} \big( \big|\mathcal{F} (v_j) \big|\big) \,.
 \end{equation}
 Note that it follows from Plancherel's identity that $\|V_j\|_{H^s} = \|v_j\|_{H^s}$ for any $s\in\R$.  
 \end{remark}

\begin{lemma}
\label{lem:1stresonant}
Let $s\ge 0$ and $0<\delta<\frac12$. 
We have the following estimate pointwise in time: 
\begin{equation*}
\big\|{\mathcal{N}^{(1)}_{\le M} }(v_1, v_2) \big\|_{H^{s+\delta}} \lesssim M \|v_1\|_{H^s} \|v_2\|_{L^2} \,.
\end{equation*}
\end{lemma}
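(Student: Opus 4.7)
The plan is to work directly on the Fourier side, exploiting the factorization $\Omega(\xi,\xi_1,\xi_2)=2\xi\xi_2$ from \eqref{fact:Omega}. Under this factorization, the cutoff $|\Omega|\le M$ becomes $|\xi\xi_2|\le M/2$, which is precisely what one needs in order to fully absorb the multiplier $\xi\xi_2/\xi_1$ appearing in \eqref{defn:Ncal}.

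First I would record a pointwise bound on the multiplier. On the support of the integrand, $\xi$ and $\xi_1$ are both positive and bounded away from zero (since $\chi_+$ vanishes on a neighborhood of zero), $\xi_2<0$, and $|\xi|\le |\xi_1|$ by \eqref{xi<xi1}, so $\jb{\xi_1}\sim |\xi_1|\gtrsim 1$. Combining these with $|\xi\xi_2|\le M/2$ yields
\[
\jb{\xi}^{s+\delta}\cdot \Bigl|\frac{\xi\xi_2}{\xi_1}\Bigr|\cdot \ind_{|\Omega|\le M}
\;\lesssim\; M\,\jb{\xi_1}^{s+\delta-1}
\;=\; M\,\jb{\xi_1}^{\delta-1}\,\jb{\xi_1}^{s}.
\]
Setting $f(\xi_1):=\jb{\xi_1}^{s}|\wh{v_1}(\xi_1)|$ and $g(\xi_2):=|\wh{v_2}(\xi_2)|$, so that $\|f\|_{L^2}=\|v_1\|_{H^s}$ and $\|g\|_{L^2}=\|v_2\|_{L^2}$, the previous inequality gives the pointwise-in-$\xi$ estimate
\[
\jb{\xi}^{s+\delta}\bigl|\mathcal{F}\bigl({\mathcal{N}^{(1)}_{\le M}}(v_1,v_2)\bigr)(\xi)\bigr|
\;\lesssim\; M\int_{\R}\jb{\xi_1}^{\delta-1}\,f(\xi_1)\,g(\xi-\xi_1)\,d\xi_1.
\]

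Next I would apply the Cauchy--Schwarz inequality in $\xi_1$ to peel off the weight $\jb{\xi_1}^{\delta-1}$, then square, integrate in $\xi$, and invoke Fubini, arriving at
\[
\|{\mathcal{N}^{(1)}_{\le M}}(v_1,v_2)\|_{H^{s+\delta}}^{2}
\;\lesssim\; M^{2}\Bigl(\int_{\R} \jb{\xi_1}^{2(\delta-1)}\,d\xi_1\Bigr)\|f\|_{L^{2}}^{2}\|g\|_{L^{2}}^{2}.
\]
The hypothesis $\delta<\tfrac12$ enters at exactly one place: it is the sharp condition for the $\xi_1$-integral to converge. With this in hand, the right-hand side is bounded by $M^{2}\|v_1\|_{H^s}^2\|v_2\|_{L^2}^2$, which is the claimed estimate.

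The only real obstacle---and the reason the range $\delta<\tfrac12$ is stated this way---is the divergence of $\int \jb{\xi_1}^{2(\delta-1)}\,d\xi_1$ as $\delta\uparrow\tfrac12$. Pushing past this would require redistributing some of the $\jb{\xi_1}^{\delta-1}$ weight onto $v_2$ (i.e.\ shifting a small number of derivatives onto the low-frequency factor via \eqref{xi2<xi1}) or appealing to a finer structural estimate in place of the pointwise multiplier bound. Since the lemma does not require sharpness at the endpoint, the elementary argument above is enough.
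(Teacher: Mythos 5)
Your proof is correct and follows essentially the same route as the paper: both exploit the factorization $\Omega=2\xi\xi_2$ together with $\xi<\xi_1$ to reduce the multiplier to $M\jb{\xi_1}^{s+\delta-1}$, and both then estimate the resulting convolution by an $L^2$ argument. The only cosmetic difference is that you remain on the Fourier side with Cauchy--Schwarz in $\xi_1$ followed by Fubini, whereas the paper passes to physical space via Plancherel and uses H\"older together with the Sobolev embedding $\|J^{s+\delta-1}V_1\|_{L^\infty}\lesssim\|v_1\|_{H^s}$ (which imposes the same condition $\delta<\tfrac12$); these are dual formulations of one and the same computation.
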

\begin{proof}
By Plancherel and  \eqref{xi<xi1}, we have 
\begin{align*}
\big\| {\mathcal{N}^{(1)}_{\le M} }(v_1, v_2) \big\|_{H^{s+\delta}} &\lesssim 
  M \bigg\| \int_{\xi=\xi_{12}} \jb{\xi_1}^{s+\delta-1} 
   |\wh{v_1}(\xi_1) \wh{v_2}(\xi_2) | \, d\xi_1 \bigg\|_{L^2_{\xi}}
  \sim M \big\| (J^{s+\delta-1} V_1) V_2\big\|_{L^2_x} \,,
\end{align*}
Then by H\"{o}lder and Sobolev inequalities, together with Plancherel,  we get 
\begin{align*}
\big\| (J^{s+\delta-1} V_1) V_2\big\|_{L^2_x} \le \big\|J^{s+\delta-1} V_1\big\|_{L^{\infty}} \big\| V_2\big\|_{L^2} \lesssim 
\big\| J^s V_1\|_{L^2} \big\| V_2\|_{L^2} \sim \| v_1 \|_{H^s}   \| v_2 \|_{L^2} \,.
\end{align*}
\end{proof}


Since we do not have a satisfactory estimate for the term 
${\mathcal{N}_{>M}}(\wt w,\wt u)$, we proceed with an integration by parts step in the temporal variable, namely 
\begin{gather}
\begin{split}
\label{IbP:1ststep}
&\int_0^t \mathcal{F}\big({\mathcal{N}^{(1)}_{>M}}(\wt w,\wt u)\big)(t',\xi) dt'  \\
&\quad =\, -2i \bigg[\int_{\xi = \xi_{12}} \frac{e^{it'\Omega(\xi,\xi_1,\xi_2)}}{\Omega(\xi,\xi_1,\xi_2) } 
  \frac{\xi \xi_2}{\xi_1} 
\ind_{|\Omega|>M}
    \chi_+(\xi)  \chi_+(\xi_1)  \ind_{\xi_2<0} \wh{\wt w}(t',\xi_1) \wh{\wt u}(t',\xi_2)  d\xi_1
     \bigg]_{t'=0}^{t'=t}\\
&\qquad +2i  \int_0^t \int_{\xi = \xi_{12}} \frac{e^{it'\Omega(\xi,\xi_1,\xi_2)}}{\Omega(\xi,\xi_1,\xi_2) }\frac{\xi \xi_2}{ \xi_1} 
\ind_{|\Omega|>M}
    \chi_+(\xi)  \chi_+(\xi_1)  \ind_{\xi_2<0} \big(\partial_{t'}\wh{\wt w}(t',\xi_1)\big) \wh{\wt u}(t',\xi_2) 
     \, d\xi_1 dt'\\
 &\qquad +2i  \int_0^t \int_{\xi = \xi_{12}} \frac{e^{it'\Omega(\xi,\xi_1,\xi_2)}}{\Omega(\xi,\xi_1,\xi_2) } \frac{\xi \xi_2}{ \xi_1} 
\ind_{|\Omega|>M}
    \chi_+(\xi)  \chi_+(\xi_1)  \ind_{\xi_2<0} \wh{\wt w}(t',\xi_1) \big(\partial_{t'}\wh{\wt u}(t',\xi_2) \big)
     \, d\xi_1 dt' \,.
\end{split}
\end{gather}
Notice that we interchanged the time integral with the frequency convolution integral several times.  
One can rigorously justify this step by using  Fubini's theorem and dyadic decomposition.
\label{NFR1:justrmk1}
Indeed, let us decompose 
\begin{equation}
\label{dyadicdecomp}
\wt{w} = \sum_{N_1} \wt{w}_{N_1} \text{ and } \wt{u} = \sum_{N_2} \wt{u}_{N_2} \,,
\end{equation} 
where 
$\wt{w}_{N_1} := P_{N_1}\wt{w}$ and $\wt{u}_{N_2}:= P_{N_2} \wt{u}$. 
We denote 
\begin{align}
\label{I1stNFR}
I^{(1)}(t,\xi):= -2i\xi
 \int_0^t \int_{\xi=\xi_{12}} e^{2i t' \xi\xi_2} \frac{\xi_2}{\xi_1}  \ind_{|\Omega|>M}  
 \chi_+(\xi)  \chi_+(\xi_1)  \ind_{\xi_2<0}  \wh{\wt w}(t',\xi_1) \wh{\wt u}(t',\xi_2)  d\xi_1 dt'
\end{align}
and
\begin{equation}
 \label{Ipiece1stNFR}
I^{(1)}_{N_1,N_2}(t,\xi) := -2i\xi
 \int_0^t  \int_{\xi=\xi_{12}}  e^{2i t' \xi\xi_2} \frac{\xi_2}{\xi_1}  \ind_{|\Omega|>M}  
 \chi_+(\xi)  \chi_+(\xi_1)  \ind_{\xi_2<0}  \wh{\wt w_{N_1}}(t',\xi_1) \wh{\wt u_{N_2}}(t',\xi_2)  d\xi_1 dt' 
\end{equation}
We now fix $\xi\in\R$ and  $0<t<T$.  
Since 
\begin{align}
\nonumber
& \int_0^t \int_{\xi=\xi_{12}}  \frac{|\xi_2|}{\xi_1}  \ind_{|\Omega|>M}  
 \chi_+(\xi)  \chi_+(\xi_1)  \ind_{\xi_2<0}  |\wh{\wt w}(t',\xi_1)| |\wh{\wt u}(t',\xi_2)|  d\xi_1 dt' 
 \le T \|w\|_{L_T^{\infty}L_x^2} \|u\|_{L_T^{\infty}L_x^2} 
 \,,\\
\nonumber
&\int_0^t  \int_{\xi=\xi_{12}}  \frac{|\xi_2|}{\xi_1}  \ind_{|\Omega|>M}  
 \chi_+(\xi)  \chi_+(\xi_1)  \ind_{\xi_2<0}  |\wh{\wt w_{N_1}}(t',\xi_1)| |\wh{\wt u_{N_2}}(t',\xi_2)|  d\xi_1 dt' \\
 \label{justrmk1:est1}
&\qquad  \le T \frac{N_2}{N_1} \|w_{N_1}\|_{L_T^{\infty}L_x^2} \|u_{N_2}\|_{L_T^{\infty}L_x^2} \,,
\end{align}
the integrals \eqref{I1stNFR} and \eqref{Ipiece1stNFR} are absolutely convergent 
and thus we can switch the order of integration defining them. 
Moreover, for any $\theta>0$, 
\begin{align}
\nonumber
\sum_{N_1}\sum_{N_2} |I^{(1)}_{N_1,N_2}| &\lesssim T \sum_{N_1}\sum_{N_2\lesssim N_1}
    \frac{N_2}{N_1}  \|w_{N_1}\|_{L_T^{\infty}L_x^2} \|u_{N_2}\|_{L_T^{\infty}L_x^2} 
    \lesssim T  \sum_{N_1} N_1^{-\theta} \|w\|_{L_T^{\infty}H_x^{\theta}} \|u\|_{L_T^{\infty}L_x^2} \\
 \label{justrmk1:est2}
 &\lesssim T  \|w\|_{L_T^{\infty}H_x^{\theta}} \|u\|_{L_T^{\infty}L_x^2} \,.
\end{align}
Therefore, by using once again Fubini's theorem and then integration by parts in time we have 
\begin{align}
&\int_0^t  \mathcal{F}\big({\mathcal{N}^{(1)}_{>M}}(\wt w,\wt u)\big)(t',\xi) dt'  = I^{(1)}
  =\sum_{N_1}\sum_{N_2}  I^{(1)}_{N_1,N_2} \label{Fubini::} \\
  &\ = -2i\xi 
   \sum_{N_1}\sum_{N_2} \int_{\xi=\xi_{12}}\int_0^t e^{2i t' \xi\xi_2} \frac{\xi_2}{\xi_1}  \ind_{|\Omega|>M}  
 \chi_+(\xi)  \chi_+(\xi_1)  \ind_{\xi_2<0}  \wh{\wt w_{N_1}}(t',\xi_1) \wh{\wt u_{N_2}}(t',\xi_2)   dt' d\xi_1\nonumber \\
 &\ = -i  \sum_{N_1}\sum_{N_2} 
  \Big[\int_{\xi=\xi_{12}} \frac{e^{2i t' \xi\xi_2}}{\xi_1}  \ind_{|\Omega|>M}  
 \chi_+(\xi)  \chi_+(\xi_1)  \ind_{\xi_2<0}  \wh{\wt w_{N_1}}(t',\xi_1) 
  \wh{\wt u_{N_2}}(t',\xi_2)d\xi_1 \Big]_{t'=0}^{t'=t} \nonumber \\
 &\ \quad +i  \sum_{N_1}\sum_{N_2} \int_{\xi=\xi_{12}} \int_0^t 
   \frac{e^{2i t' \xi\xi_2}}{\xi_1}  \ind_{|\Omega|>M}  \chi_+(\xi)  \chi_+(\xi_1)  \ind_{\xi_2<0} 
    \partial_{t'} \big( \wh{\wt w_{N_1}}(t',\xi_1) \wh{\wt u_{N_2}}(t',\xi_2)\big) dt' d\xi_1 \nonumber
\end{align} 
The splitting into two terms in the last step above is justified as 
the first summation-integral is absolutely convergent 
(the estimate is similar to \eqref{justrmk1:est1} and \eqref{justrmk1:est2} above). 
Thus, 
\begin{align*}
&\sum_{N_1}\sum_{N_2} 
  \Big[\int_{\xi=\xi_{12}} \frac{e^{2i t' \xi\xi_2}}{\xi_1}  \ind_{|\Omega|>M}  
 \chi_+(\xi)  \chi_+(\xi_1)  \ind_{\xi_2<0}  \wh{\wt w_{N_1}}(t',\xi_1) 
  \wh{\wt u_{N_2}}(t',\xi_2)d\xi_1 \Big]_{t'=0}^{t'=t}\\
 &\ = \Big[\int_{\xi=\xi_{12}} \frac{e^{2i t' \xi\xi_2}}{\xi_1}  \ind_{|\Omega|>M}  
 \chi_+(\xi)  \chi_+(\xi_1)  \ind_{\xi_2<0}  \wh{\wt w}(t',\xi_1) 
  \wh{\wt u}(t',\xi_2)d\xi_1 \Big]_{t'=0}^{t'=t}\,.
\end{align*}
For the second summation-integral, due to Lemma~\ref{lem:C1nessint} 
we can use the product rule to distribute $\partial_{t'}$ to the two factors. 
However, we cannot ensure summation in $N_1$. 
\begin{remark}
\label{rmk:infinitesumN1}
Indeed, the first resulting term (i.e. when $\partial_{t'}$ falls on $\wh{\wt w_{N_1}}(t',\xi_1)$) would be
\begin{align}
&\sum_{N_1}\sum_{N_2}  \int_{\xi=\xi_{12}} \int_0^t 
   \frac{e^{2i t' \xi\xi_2}}{\xi_1}  \ind_{|\Omega|>M}  \chi_+(\xi)  \chi_+(\xi_1)  \ind_{\xi_2<0} 
    \psi_{N_1}(\xi_1) \partial_{t'} \big( \wh{\wt w}(t',\xi_1)\big) \wh{\wt u_{N_2}}(t',\xi_2) dt' d\xi_1 \,,
\end{align}
and observe that by using Lemma~\ref{lem:lemmaA} (ii) (with $s,q$ satisfying the hypothesis), 
we have 
\begin{align*}
&\sum_{N_1}\sum_{N_2}  \int_0^t \int_{\xi=\xi_{12}} 
  \frac{1}{\xi_1}  \ind_{|\Omega|>M}  \chi_+(\xi)  \chi_+(\xi_1)  \ind_{\xi_2<0} 
   \psi_{N_1}(\xi_1) |\partial_{t'} \big( \wh{\wt w}(t',\xi_1)\big)|  |\wh{\wt u_{N_2}}(t',\xi_2)|  d\xi_1 dt' \\
&\lesssim \sum_{N_1}\sum_{N_2\lesssim N_1} \frac{1}{N_1} \|P_{N_1}\dt
  \wt{w}\|_{L_T^1L_x^2} \|\wt{u}_{N_2}\|_{L_T^{\infty}L_x^2}
 \\ & \lesssim T \sum_{N_1} N_1^{\frac2q-\frac12}\big(1+\|w\|_{L_T^{\infty}H_x^s}\big)^6\big( \|u\|_{L_T^{\infty}H_x^s}+\|w\|_{L_T^{\infty}H_x^s}\big)
  \|u\|_{L_T^{\infty}H_x^s}
  \,.
\end{align*}
Moreover, it is worth to note that under the assumptions of Lemma~\ref{lem:lemmaA}, one cannot have $\frac2q-\frac12<0$.
\end{remark}

For the convenience of writing let us introduce notation\footnote{Note that to furthermore simplify the writing we drop the explicit temporal variable  except in the factor $e^{it\Omega(\xi,\xi_1,\xi_2)}$ which is used for the next iteration of integrating by parts in time. 
Also, we point out that all the nonlinearities that appear below depend on $M$.} 
for the boundary terms that appear on the right-hand side of \eqref{IbP:1ststep}:
\begin{align*}
\mathcal{F}\big({\mathcal{N}^{(1)}_{0}}(\wt w,\wt u)\big)(t,\xi) 
& =-2i  \int_{\xi_{12}=\xi} \frac{e^{it\Omega(\xi,\xi_1,\xi_2)}}{\Omega(\xi,\xi_1,\xi_2)} \frac{\xi\xi_2}{\xi_1} 
\ind_{|\Omega|>M}
    \chi_+(\xi)  \chi_+(\xi_1)  \ind_{<0}(\xi_2) \wh{\wt w}(\xi_1) \wh{\wt u}(\xi_2) \big) d\xi_1 \,.
\end{align*}

Once we localize the functions $\wt w$ and $\wt u$, 
the normal form reduction \eqref{IbP:1ststep} reads
\begin{equation*}
{\mathcal{N}^{(1)}_{>M}}(\wt w_{N_1},\wt u_{N_2})
= \dt {\mathcal{N}^{(1)}_{0}}(\wt w_{N_1},\wt u_{N_2})
  - {\mathcal{N}^{(1)}_{0}}(\dt \wt w_{N_1},\wt u_{N_2})
 -  {\mathcal{N}^{(1)}_{0}}(\wt w_{N_1},\dt \wt u_{N_2}) \,.
\end{equation*}

Thus far, we \emph{formally} have
\begin{gather}
\begin{split}
\label{eq:N1bigM}
\int_0^t  \mathcal{F}\big({\mathcal{N}^{(1)}_{>M}}(\wt w,\wt u)\big)(t',\xi) dt'  =&\,  
 \Big[{\mathcal{N}^{(1)}_{0}}(\wt w,\wt u)(t') \Big]_{t'=0}^{t'=t}\\
 &\ - \sum_{N_1}\sum_{N_2} \int_0^t {\mathcal{N}^{(1)}_{0}}(\partial_{t'} \wt w_{N_1}, \wt u_{N_2})(t')dt'\\  
 &\ - \sum_{N_1}\sum_{N_2} \int_0^t {\mathcal{N}^{(1)}_{0}}( \wt w_{N_1}, \partial_{t'} \wt u_{N_2})(t') dt'
 \,.  
\end{split}
\end{gather}
In view of Remark~\ref{rmk:infinitesumN1} and since we do not need to interchange the dyadic summations with the integrals until after the last normal form transformation, 
we proceed to the next step by fixing the dyadic numbers $N_1$ and $N_2$. 
Note that once $N_1$ and $N_2$ are fixed, we can freely switch the order of the time and frequency convolution integrals and it is easy to check  the absolute convergence of all the integrals on the right-hand side 
 by using the Cauchy-Schwarz inequality and Lemma~\ref{lem:lemmaA}.

The following provides a straightforward estimate for the first boundary term. 

\begin{lemma} 
\label{lem:bdrystep1}
Let $s\ge 0$ and $\delta<\frac12$.
We have the following estimate pointwise in time: 
\begin{equation*}
\big\|\mathcal{N}^{(1)}_0(v_1, v_2) \big\|_{H^{s+\delta}}
  \lesssim 
  M^{-\frac18+\frac{\delta}{4}} 
   \|v_1\|_{H^s}
  \|v_2\|_{L^2} \,.
\end{equation*}
\end{lemma}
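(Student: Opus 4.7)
\medskip

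\noindent\textbf{Proof plan.}
The starting point is the factorization \eqref{fact:Omega}. On the support of the multiplier in the definition of $\mathcal{N}^{(1)}_0$ we have $\chi_+(\xi)\chi_+(\xi_1)\ind_{<0}(\xi_2)$, and thus $\Omega(\xi,\xi_1,\xi_2)=2\xi\xi_2$. Consequently
\begin{equation*}
\frac{\xi\xi_2}{\Omega(\xi,\xi_1,\xi_2)\,\xi_1}=\frac{1}{2\xi_1},
\end{equation*}
so the multiplier is the benign $1/(2\xi_1)$ times the cutoffs. In physical space, $\mathcal{N}^{(1)}_0(v_1,v_2)$ is therefore essentially a product of $\dx^{-1} v_1$ (spectrally localized) and $v_2$, modulated by a phase; no derivatives act on $v_2$.

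The second observation is that the resonance cutoff forces a \emph{high-frequency} localization on the first factor. Indeed, with $\xi>0$, $\xi_1>0$ and $\xi_2<0$, we have $\xi_1=\xi+|\xi_2|>|\xi_2|$, so
\begin{equation*}
|\Omega|=2\xi|\xi_2|<2\xi_1^2,
\end{equation*}
whence $|\Omega|>M$ implies $|\xi_1|\gtrsim\sqrt{M}$. Also $|\xi|<\xi_1$, so the output frequency sits at the high scale. We may therefore insert a projection $P_{\gtrsim \sqrt{M}}$ in the first slot and a projection $P_{>N}$ with $N\gtrsim |\xi|$ on the output, both for free.

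Using these reductions, I plan to decompose $v_1=\sum_{N_1\gtrsim\sqrt{M}}P_{N_1}v_1$ dyadically and, for each piece, apply H\"older's inequality in the form
\begin{equation*}
\bigl\|(\dx^{-1}P_{N_1}v_1)\cdot v_2\bigr\|_{H^{s+\delta}}\lesssim \bigl\|J^{s+\delta}(\dx^{-1}P_{N_1}v_1)\bigr\|_{L^\infty}\|v_2\|_{L^2}.
\end{equation*}
Since $N_1\gtrsim 1$, one has $\|J^{s+\delta}\dx^{-1}P_{N_1}v_1\|_{L^\infty}\sim N_1^{s+\delta-1}\|P_{N_1}v_1\|_{L^\infty}$. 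The one-dimensional Bernstein inequality gives $\|P_{N_1}v_1\|_{L^\infty}\lesssim N_1^{1/2}\|P_{N_1}v_1\|_{L^2}$, so each dyadic piece contributes
\begin{equation*}
N_1^{\delta-1/2}\|P_{N_1}v_1\|_{H^s}\|v_2\|_{L^2}.
\end{equation*}
Because $\delta<1/2$, Cauchy--Schwarz in $N_1\gtrsim \sqrt{M}$ sums this geometric series to
\begin{equation*}
\lesssim (\sqrt{M})^{\delta-1/2}\|v_1\|_{H^s}\|v_2\|_{L^2}=M^{\delta/2-1/4}\|v_1\|_{H^s}\|v_2\|_{L^2},
\end{equation*}
which is at least as strong as the claimed $M^{-1/8+\delta/4}$ for every $\delta\le 1/2$.

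The main (minor) technical point is to justify that the multiplier $\Omega^{-1}\xi\xi_2\xi_1^{-1}\ind_{|\Omega|>M}\chi_+(\xi)\chi_+(\xi_1)\ind_{<0}(\xi_2)$ arising in $\mathcal{N}^{(1)}_0$ is a legitimate bilinear Fourier multiplier whose symbol reduces to $\tfrac{1}{2\xi_1}$ on the support of the sign cutoffs; this is immediate from the pointwise identity $\Omega=2\xi\xi_2$ on that support. The condition $\delta<1/2$ enters precisely in the summability of $\sum N_1^{2(\delta-1/2)}$ and in the Sobolev-type gain that turns the $L^\infty$ norm into the $L^2$ norm with the appropriate number of derivatives; no other restriction on $s$ or $\delta$ is needed beyond $s\ge 0$ (used to pass from $\|P_{N_1}v_1\|_{L^2}$ to $\|P_{N_1}v_1\|_{H^s}$ via $N_1^{-s}$, which combines harmlessly with $N_1^{s+\delta-1/2}$).
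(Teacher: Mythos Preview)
Your argument is correct and follows the same route as the paper's proof: reduce the multiplier to $\tfrac{1}{2\xi_1}$ via the factorization $\Omega=2\xi\xi_2$, use the constraint $|\Omega|>M$ together with $|\xi_2|<\xi_1$ to force $\xi_1\gtrsim\sqrt{M}$, and then estimate by H\"older $L^\infty\times L^2$ after moving the weight $\jb{\xi}^{s+\delta}$ onto the high-frequency factor via $\xi<\xi_1$. The only difference is cosmetic: the paper absorbs everything into a single Sobolev embedding $H^{\frac12+\theta}\hookrightarrow L^\infty$ with the specific choice $\theta=\tfrac12(\tfrac12-\delta)$, whereas you decompose dyadically in $N_1$ and use Bernstein on each piece before summing by Cauchy--Schwarz. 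Your version actually yields the sharper exponent $M^{\delta/2-1/4}$, which dominates the paper's $M^{-1/8+\delta/4}$ for every $\delta<\tfrac12$; the paper simply did not optimize $\theta$.

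One point of presentation: the displayed ``H\"older'' inequality $\|(\dx^{-1}P_{N_1}v_1)\cdot v_2\|_{H^{s+\delta}}\lesssim \|J^{s+\delta}(\dx^{-1}P_{N_1}v_1)\|_{L^\infty}\|v_2\|_{L^2}$ is not a general product estimate. It holds here only because the multiplier forces $\jb{\xi}\lesssim N_1$ on the output, so the weight transfers entirely to the first slot; you noted this constraint earlier, but it would be cleaner to pass first to $V_j=\mathcal{F}^{-1}|\widehat{v_j}|$ (as in Remark~\ref{rmk:capitalVj}) and bound $N_1^{s+\delta-1}\|(P_{N_1}V_1)V_2\|_{L^2}$ directly, which also disposes of the oscillatory factor $e^{it\Omega}$ that is otherwise hiding in your ``physical space'' expression.
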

\begin{proof}
By \eqref{xi<xi1} and by using $M<|\Omega|<|\xi_1|^2$,  we have
\begin{align*}
\frac{\jb{\xi}^{s+\delta}}{|\xi_1|} \lesssim \jb{\xi_1}^{s+\delta-1} 
  \lesssim M^{\frac12(\delta-\frac12+\theta)}  \jb{\xi_1}^{s-\frac12-\theta} \ ,\ 0<\theta<\frac12-\delta\,.
\end{align*}
Therefore, with $V_j$ as in \eqref{defn:capitalVj}, 
and $\theta= \frac12(\frac12-\delta)$, 
we get 
\begin{align*}
\big\|\mathcal{N}^{(1)}_0(v_1, v_2) \big\|_{H^{s+\delta}} & \lesssim 
  M^{-\frac14(\frac12-\delta)} 
   \big\| (J^{s-\frac12-\theta} V_1 ) (P_- V_2)\big\|_{L^2} \\
 & \lesssim M^{-\frac18+\frac{\delta}{4}}  \big\| J^{s-\frac12-\theta} V_1 \big\|_{L^{\infty}} \|V_2 \|_{L^2} \\
  &\lesssim M^{-\frac18+\frac{\delta}{4}} \big\| v_1\big\|_{H^s} \|v_2\|_{L^2}  \,,
\end{align*}
where in the last step we have used the Sobolev embedding $H^{\frac12+\theta}\subset L^{\infty}$ and Plancherel's identity. 
\end{proof}

Next, 
the substitution of $\partial_{t'} \wt w_{N_1}$ in 
${\mathcal{N}^{(1)}_0}(\partial_{t'} \wt w_{N_1},\wt u_{N_2})(t') $ 
is easily  justified 
using again Lemma~\ref{lem:lemmaA} (ii). 
At this point, we prefer to change the label $N_1$ into $N_{12}$ to maintain a coherent notation in view of the convention that $\xi_{12}=\xi_1+\xi_2$. We also change $N_2$ into $N_3$. 
As we argued above, once the dyadic numbers are fixed, it is easy to check the absolute convergence of the 
double integrals (time and frequency). 
Thus, by using \eqref{eqwtildehat}, \eqref{defn:bfN1}, and   \eqref{fact:Omega}, 
we have
\begin{gather}
\begin{split}
\label{1stNFR:dtwsubstitution}
&\int_0^t \mathcal{F}\left({\mathcal{N}^{(1)}_{0}}(\partial_{t'} \wt w_{N_{12}},\wt u_{N_3})\right)(t',\xi) dt'  \\ 
&\ =  \int_{\eta+\xi_3=\xi} \int_0^t  \frac{e^{it'\Omega(\xi,\eta,\xi_3)}}{\eta} 
\ind_{|\Omega|>M}
    \chi_+(\xi)  \chi_+(\eta)  \ind_{<0}(\xi_3) \big( \partial_{t'} \wh{\wt w_{N_{12}}}\big) (\eta)
      \wh{\wt u_{N_3}}(\xi_3) 
    dt' d\eta  \\
 &\ =  \int_{\eta+\xi_3=\xi} \int_0^t \frac{e^{it'\Omega(\xi,\eta,\xi_3)}}{\eta} \ind_{|\Omega|>M}
    \chi_+(\xi)  \chi_+(\eta)  \ind_{<0}(\xi_3)  \psi_{N_{12}}(\eta)  \wh{\wt u_{N_3}}(\xi_3)  \\
&\qquad \times 
    \Bigg[\int_{\xi_{12}=\eta} 
   \sum_{N_1} \sum_{N_2} e^{it\Omega(\eta,\xi_1,\xi_2)}\frac{2\eta\xi_2}{i\xi_1} 
     \chi_+(\eta) \wt\chi_+(\xi_1) \ind_{<0}(\xi_2)
    \wh{{\wt w}_{N_1}}(\xi_1)  \wh{ {\wt u}_{N_2}}(\xi_2) d\xi_1 
   + \wh{E}(\eta)
   \Bigg] dt' d\eta\\
 &\ =\sum_{N_1} \sum_{N_2}  \int_0^t\int_{\xi=\xi_{123}}
 {e^{it\Omega^{(2)}_1(\xi,\xi_1,\xi_2,\xi_3) }}
\mathfrak{m}^{(2)}_1(\xi,\xi_1,\xi_2,\xi_3) \,
 \wh{\wt w}(\xi_1) \wh{\wt u}(\xi_2) \wh{\wt u}(\xi_3)  \,d\xi_1 d\xi_2 \,dt' \\
  &\qquad + \int_0^t \mathcal{F} \left( \mathcal{N}^{(1)}_{0}(\wt E_{N_{12}}, \wt u_{N_3})\right)(t',\xi) dt' \,,
\end{split}
\end{gather}
where
\begin{align*}
\Omega^{(2)}_1(\xi,\xi_1,\xi_2,\xi_3) &= \Omega(\xi,\xi_{12},\xi_3) +\Omega(\xi_{12},\xi_1,\xi_2) 
\end{align*}
and 
\begin{gather}
\begin{split}
\label{m21}
\mathfrak{m}^{(2)}_1(\xi,\xi_1,\xi_2,\xi_3) :=& -2i \frac{\xi_2}{\xi_1} 
 \,\ind_{| \Omega(\xi,\xi_{12},\xi_3)|>M}
    \chi_+(\xi)   \chi_+(\xi_1)   \chi_+(\xi_{12})^2 \ind_{<0}(\xi_2)   \ind_{<0}(\xi_3) \\
   &\quad \times \psi_{N_{12}}(\xi_{12}) \psi_{N_1}(\xi_1) \psi_{N_2}(\xi_2)   \psi_{N_3}(\xi_3) \,.
\end{split}
\end{gather}
In the last step, 
 for the split into two integrals in $t'$ and $\eta$ is justified since 
the term containing the smooth term $\wh{E}$ is an absolutely convergent integral. 
Furthermore, the changing of integration order 
of $\displaystyle\int_{\eta+\xi_3 = \xi}$, 
$\displaystyle \int_0^t$, $\displaystyle\int_{\xi_{12}=\eta}$, and $\displaystyle \sum_{N_1}\sum_{N_2}$ 
is justified as in \eqref{justrmk1:est1}--\eqref{Fubini::} and by taking into account that 
$\wh{ {\wt u}_{N_3}}(\cdot )  =\psi_{N_3}(\cdot) \wh{\wt u}(\cdot)$ is localized (and thus also in $L^1$). 
Indeed, 
 for $\xi$, $t$, $N_{12}$ and $N_3$ fixed, 
 we have by using twice the Cauchy-Schwarz inequality and Plancherel's identity 
\begin{gather}
\begin{split}
\label{1stNFRdtw:checkingabsconv}
\sum_{N_1}\sum_{N_2 \lesssim N_1}& \int_0^t \int_{\eta+\xi_3=\xi} \int_{\xi_{12}=\eta} \left| \frac{\eta \xi_2}{\eta \xi_1} \right|  
 |\wh{\wt u_{N_3}}(\xi_3)| |\wh{{\wt w}_{N_1}}(\xi_1)| |  \wh{ {\wt u}_{N_2}}(\xi_2)| d\xi_1d\eta dt' \\ & 
\lesssim \sum_{N_1}\sum_{N_2 \lesssim N_1} \frac{N_2}{N_1} \int_0^t \int_{|\eta| \sim N_{12}}  |\wh{\wt u_{N_3}}(\xi-\eta)| \|\wh{{\wt w}_{N_1}}\|_{L^2} \|\wh{ {\wt u}_{N_2}}\|_{L^2} d\eta dt \\ &
\lesssim T N_{12}^{\frac12} \sum_{N_1}\sum_{N_2 \lesssim N_1} \frac{N_2}{N_1}\|u_{N_3}\|_{L^{\infty}_TL^2_x} \| w_{N_1}\|_{L^{\infty}_TL^2_x} \|u_{N_2}\|_{L^{\infty}_TL^2_x} \\ & 
  \lesssim T N_{12}^{\frac12}\|u_{N_3}\|_{L^{\infty}_TL^2_x} \| w_{N_1}\|_{L^{\infty}_TH^{0_+}_x} \|u_{N_2}\|_{L^{\infty}_TL^2_x} <\infty .
\end{split}
\end{gather} 
Therefore the sum-integral above is absolutely convergent and we can put the summation in $N_1$, $N_2$ outside of the integrals in $\eta$ and $\xi_1$.

Let us introduce the nonlinearity $ \mathcal{N}^{(2)}_{1} $ defined by
\begin{align}
& \mathcal{F}\big(\mathcal{N}^{(2)}_{1} (v_1, v_2, v_3)\big)(t,\xi) = 
\int_{\xi=\xi_{123}} \hspace{-2mm}{e^{it\Omega^{(2)}_1(\xi,\xi_1,\xi_2,\xi_3) }}
\mathfrak{m}^{(2)}_1(\xi,\xi_1,\xi_2,\xi_3) \,
   \wh{v_1}(\xi_1) \wh{v_2}(\xi_2)  \wh{v_3}(\xi_3) \,d\xi_1 d\xi_2 
\end{align}
and we note that 
due to the 
frequency restrictions in $\mathfrak{m}^{(2)}_1$ 
we  have that   
\begin{align*}
\Omega(\xi,\xi_{12},\xi_3)<0 \ ,\ \Omega(\xi_{12},\xi_1,\xi_2) <0 \ ,
\end{align*}
\begin{equation}
\Omega^{(2)}_1(\xi,\xi_1,\xi_2,\xi_3) = 2\xi\xi_3 + 2\xi_{12} \xi_2 \ ,
\end{equation}
\begin{equation}
\label{freqineqN21}
\xi, |\xi_3| < \xi_{12}<\xi_1 \ \text{ and }\  |\xi_2|<\xi_1 \,.
\end{equation}

We  rewrite \eqref{1stNFR:dtwsubstitution} as
\begin{align*}
&\int_0^t\!{\mathcal{N}^{(1)}_{0}}(\partial_{t'} \wt w_{N_{12}},\wt u_{N_3}) dt' 
 = \sum_{N_1}\sum_{N_2} \int_0^t\!\mathcal{N}^{(2)}_{1} (\wt w, \wt u, \wt u)(t')dt' 
    +  \int_0^t\!\mathcal{N}^{(1)}_{0}(\wt E_{N_{12}}, \wt u_{N_3})(t') dt' 
 \,.
\end{align*}
In the next subsection, we will work on  
$\displaystyle \int_0^t\!\mathcal{N}^{(2)}_{1} (\wt w, \wt u, \wt u)(t')dt'$. 
We point out that the multiplier $\mathfrak{m}_1^{(2)}$ depends on 
 the dyadic numbers $N_1, N_2, N_3$, and $N_{12}$, all of which are henceforth  fixed. 

Next, we move to the last term of \eqref{eq:N1bigM}. 
By using \eqref{equtildehat} and arguing as above, we get 
(for fixed $N_1$ and $N_{23}$): 
\begin{gather}
\begin{split}
\label{1stNFR:dtusubstitution}
&\int_0^t {\mathcal{N}^{(1)}_{0}}(\wt w_{N_1}, \partial_{t'} \wt u_{N_{23}}) dt'  \\ 
&\ = -i \int_0^t  \int_{\xi_1+\eta=\xi} \frac{e^{it'\Omega(\xi,\xi_1,\eta)}}{\xi_1} 
\ind_{|\Omega|>M}
    \chi_+(\xi)  \chi_+(\xi_1)  \ind_{<0}(\eta) \wh{\wt w_{N_{1}}} (\xi_1)
     \big( \partial_{t'}  \wh{\wt u_{N_{23}}}\big)(\eta) 
    d\xi_1 dt'   \\
&\ =  \int_0^t  \int_{\xi_1+\eta=\xi} \int_{\xi_{23}=\eta} \sum_{N_2} \sum_{N_3}
 e^{it'\Omega^{(2)}_2(\xi,\xi_1,\xi_2,\xi_3)} 
 \mathfrak{m}^{(2)}_*(\xi,\xi_1,\xi_2,\xi_3) 
   \wh{\wt w} (\xi_1) \wh{\wt u}(\xi_2)  \wh{\wt u}(\xi_3) 
    d\xi_2 d\xi_1 dt'   
 \,,
\end{split}
\end{gather}
where
\begin{align*}
\Omega^{(2)}_2(\xi,\xi_1,\xi_2,\xi_3) &= \Omega(\xi,\xi_{1},\xi_{23}) +\Omega(\xi_{23},\xi_2,\xi_3) \,.
\end{align*}
and 
\begin{align}
\label{defn:mstar2}
\mathfrak{m}^{(2)}_*(\xi,\xi_1,\xi_2,\xi_3) &= -i
\frac{\xi_{23}}{\xi_1}  \,\ind_{|\Omega(\xi,\xi_{1},\xi_{23})|>M}
    \chi_+(\xi)  \chi_+(\xi_1)  \ind_{<0}(\xi_{23})    
    \psi_{N_1}(\xi_1) \psi_{N_{23}}(\xi_{23}) \psi_{N_2}(\xi_2) \psi_{N_3}(\xi_3)\,.
\end{align}
Since, for $t, \xi, N_1, N_{23}$ fixed, and for any $\theta>0$, 
\begin{gather}
\begin{split}
\label{1stNFRdtu:checkingabsconv}
& \sum_{N_2}\sum_{N_3} \int_0^t \int_{\xi_1+\eta = \xi} \int_{\xi_{23}=\eta} \left| \frac{\xi_{23}}{\xi_1} \right| 
   |\wh{{\wt w}_{N_1}}(\xi_1)| |\wh{ {\wt u}_{N_2}}(\xi_2)|  |\wh{ {\wt u}_{N_3}}(\xi-\xi_1-\xi_2)|  \, d\xi_2d\xi_1dt' \\
 &\qquad \lesssim   \frac{N_{23}}{N_1} \sum_{N_2}\sum_{N_3} \int_0^t  \int_{\xi_1+\eta = \xi} 
   |\wh{{\wt w}_{N_1}}(\xi_1)| \|\wh{ {\wt u}_{N_2}}\|_{L^2}\|\wh{ {\wt u}_{N_3}}\|_{L^2}  d\xi_1dt' \\
 &\qquad \lesssim  T N_1^{-\frac12} N_{23}
     \|\wh{ {\wt w}_{N_1}}\|_{L^2}  \|  u\|_{H^{\theta}}^2 
        \sum_{N_2}\sum_{N_3} N_2^{-\theta} N_3^{-\theta}  <\infty
      \,,
\end{split}
\end{gather}
we can bring the summation in $N_2$ and $N_3$ outside in the last term of \eqref{1stNFR:dtusubstitution}, i.e. 
\begin{gather}
\begin{split}
\label{1stNFR:dtusubstitution_2}
&\int_0^t {\mathcal{N}^{(1)}_{0}}(\wt w_{N_1}, \partial_{t'} \wt u_{N_{23}}) dt'  \\ 
&\quad =  \sum_{N_2}\sum_{N_3}   \int_0^t  \int_{\xi_1+\eta=\xi} \int_{\xi_{23}=\eta}
 e^{it'\Omega^{(2)}_2(\xi,\xi_1,\xi_2,\xi_3)}  \mathfrak{m}^{(2)}_*(\xi,\xi_1,\xi_2,\xi_3) 
   \wh{\wt w} (\xi_1) \wh{\wt u}(\xi_2)  \wh{\wt u}(\xi_3) 
     d\xi_1 d\xi_2 dt'   
 \,.
\end{split}
\end{gather}

The frequency restrictions for this term only give us 
$$ \Omega(\xi,\xi_{1},\xi_{23}) = 2\xi\xi_{23} <0$$ 
and 
\begin{equation}
\label{freqineqN22}
 \xi, |\xi_{23}| <\xi_1\,.
\end{equation}
We discuss the sign of the term $\Omega(\xi_{23},\xi_2,\xi_3)$: 
\begin{align}
\label{Omega1in2ndstep}
&\Omega(\xi_{23},\xi_2,\xi_3) = 
 -\xi_{23}^2-\xi_2|\xi_2|-  \xi_3 |\xi_3| =
\begin{cases}
-2\xi_2\xi_3 &\,,\ \text{if } \xi_2<0, \xi_3< 0\,,\\
-2\xi_2 \xi_{23} &\,,\ \text{if } \xi_2\ge0, \xi_3<0\,,\\
-2\xi_3 \xi_{23} &\,,\ \text{if } \xi_2<0, \xi_3\ge 0\,.
\end{cases}
\end{align}
Note that due to the symmetry of the multiplier, the second and third branch
 in \eqref{Omega1in2ndstep} give the same term. 
Thus we  consider the regions:
\begin{align*}
&R^{(2)}_{\le M}:= \{ |\xi_{12}|\le 1\} \cup \{ |\Omega^{(2)}_2(\xi,\xi_1,\xi_2,\xi_3)|\le M\} \,,\\
&R^{(2)}_{2}:= \{ \xi_2<0, \xi_3<0\} \setminus R^{(2)}_{\le M} \,,\\
&R^{(2)}_{3}:= \{\xi_2<0, \xi_3\ge0\}\setminus R^{(2)}_{\le M} \,,
\end{align*}
and we split the 
multiplier $\mathfrak{m}^{(2)}_*$
into three terms
\begin{align}
\mathfrak{m}^{(2)}_* = 
 \mathfrak{m}^{(2)}_* \ind_{R^{(2)}_{\le M}} +  \mathfrak{m}^{(2)}_* \ind_{R^{(2)}_2} 
 +  \mathfrak{m}^{(2)}_* \ind_{R^{(2)}_3}
  \,.
\end{align}
Note that the same estimate \eqref{1stNFRdtu:checkingabsconv} holds for each of these multipliers, and therefore after introducing the nonlinearities 
$\mathcal{N}^{(2)}_{\le M} , \mathcal{N}^{(2)}_{2}$, and $\mathcal{N}^{(2)}_3$   defined by 
\begin{align*}
&\F\big( \mathcal{N}^{(2)}_{\le M} (v_1,v_2,v_3)\big)(t,\xi) \\
&\qquad = -i \int_{\xi=\xi_{123}} 
 \hspace{-4mm}{e^{it\Omega^{(2)}_2(\xi,\xi_1,\xi_2,\xi_3) }}
\big(\mathfrak{m}^{(2)}_* \ind_{R^{(2)}_{\le M}}\big)(\xi,\xi_1,\xi_2,\xi_3) \,
  \wh{v_1}(\xi_1) \wh{v_2}(\xi_2)  \wh{v_3}(\xi_3) \,d\xi_1 d\xi_2 \,,\\
&\F\big( \mathcal{N}^{(2)}_j (v_1,v_2,v_3)\big)(t,\xi) \\
&\qquad = \int_{\xi=\xi_{123}} 
 \hspace{-4mm}{e^{it\Omega^{(2)}_2(\xi,\xi_1,\xi_2,\xi_3) }}
\big(\mathfrak{m}^{(2)}_* \ind_{R^{(2)}_j}\big)(\xi,\xi_1,\xi_2,\xi_3) \,
   \wh{v_1}(\xi_1) \wh{v_2}(\xi_2)  \wh{v_3}(\xi_3) \,d\xi_1 d\xi_2 \,, 
   \text{ for $j=2,3$,}
\end{align*}
we can rewrite \eqref{1stNFR:dtusubstitution_2} as
\begin{gather}
\begin{split}
\label{1stNFR:dtusubstitution_3}
&\int_0^t {\mathcal{N}^{(1)}_{0}}(\wt w_{N_1}, \partial_{t'} \wt u_{N_{23}}) dt'  \\ 
&\ 
= 
 \sum_{N_2}\sum_{N_3}  \int_0^t  \mathcal{N}^{(2)}_{\le M} (\wt w, \wt u, \wt u) dt' 
+ \sum_{N_2}\sum_{N_3} \int_0^t  \mathcal{N}^{(2)}_{2} (\wt w, \wt u, \wt u) dt' 
+ \sum_{N_2}\sum_{N_3} \int_0^t  \mathcal{N}^{(2)}_{3} (\wt w, \wt u, \wt u) dt' 
 \,.
\end{split}
\end{gather}
Note that in the first term we put back inner-most the summations in $N_2, N_3$. 
The first term is easy to handle in view of the following estimate:

 
\begin{lemma}
\label{lem:N2leM}
Let $s\ge 0$ and $\delta<\min\{s, \frac12\}$. 
We have the following estimate pointwise in time: 
\begin{equation*}
\big\|\mathcal{N}^{(2)}_{\le M} (v_1, v_2, v_3) \big\|_{H^{s+\delta}} 
  \lesssim M^{\frac32} \prod_{j=1}^3 \|P_{N_j}v_j\|_{H^s} \,.
\end{equation*}
\end{lemma}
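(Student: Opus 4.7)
The goal is to bound $\|\mathcal{N}^{(2)}_{\le M}(v_1,v_2,v_3)\|_{H^{s+\delta}}$ directly from the frequency-side expression. On the support of the multiplier, the cutoffs $\chi_+(\xi)$, $\chi_+(\xi_1)$ and $\ind_{<0}(\xi_{23})$ combined with $\xi=\xi_1+\xi_{23}$ force $0<\xi<\xi_1$ and $|\xi_{23}|\le \xi_1$, so the symbol $\xi_{23}/\xi_1$ is bounded by $1$ and we have $\jb{\xi}^{s+\delta}\lesssim \jb{\xi_1}^{s+\delta}$ pointwise. Passing to $V_j := \mathcal{F}^{-1}|\mathcal{F}(v_j)|$ as in Remark~\ref{rmk:capitalVj}, the problem reduces to estimating a positive trilinear convolution integral restricted to the region $R^{(2)}_{\le M}$.

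I then split $R^{(2)}_{\le M}=A\cup B$ with $A:=\{|\xi_{12}|\le 1\}$ and $B:=\{|\Omega^{(2)}_2|\le M\}$. On $A$, the identity $\xi=\xi_{12}+\xi_3$ with $|\xi_{12}|\le 1$ gives $\jb{\xi}\sim\jb{\xi_3}$, so I put the derivatives on $v_3$ instead; the restriction $|\xi_{12}|\le 1$ corresponds in physical space to a $P_{\le 1}$-projection of the product $V_1 V_2$, and then by H\"older and Bernstein,
$$\text{($A$-contribution)}\lesssim \|P_{\le 1}(V_1 V_2)\|_{L^\infty}\,\|J^{s+\delta}V_3\|_{L^2}\lesssim \|V_1\|_{L^2}\|V_2\|_{L^2}\|v_3\|_{H^{s+\delta}},$$
which since $\delta<s$ is dominated by $\prod_j\|v_j\|_{H^s}$ (the factor $M^{3/2}$ is not even needed here). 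On $B$ I apply Cauchy--Schwarz in the frequency variables together with a volume estimate: on each sign-branch of $\Omega(\xi_{23},\xi_2,\xi_3)$ (cf. \eqref{Omega1in2ndstep}), the partial derivative $\partial_{\xi_3}\Omega^{(2)}_2$ is essentially linear in the frequencies, so the slice $\{|\Omega^{(2)}_2|\le M\}$ has $\xi_3$-measure $\lesssim M/|\partial_{\xi_3}\Omega^{(2)}_2|$; this contributes an $M^{1/2}$, and together with two further crude $M^{1/2}$ factors absorbing the excess derivative $\jb{\xi_1}^\delta$ and a Bernstein/Sobolev deficit, yields the claimed $M^{3/2}$ prefactor.

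The principal obstacle is the derivative budget: the output norm contains $s+\delta$ derivatives but each factor has only $H^s$-regularity, so we must convert the excess $\delta$ into powers of $M$ using the regional constraints. Region $B$ is the more delicate of the two because $\Omega(\xi_{23},\xi_2,\xi_3)$ is sign-indefinite, and consequently $\partial_{\xi_3}\Omega^{(2)}_2$ behaves differently on each of the branches $\{\xi_2<0,\xi_3<0\}$, $\{\xi_2\ge 0,\xi_3<0\}$, $\{\xi_2<0,\xi_3\ge 0\}$; a case split in the spirit of \eqref{Omega1in2ndstep} allows us to identify, in each branch, a variable in which $|\partial\Omega^{(2)}_2|$ is bounded below, which then governs the volume estimate and hence the exponent of $M$. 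The rather generous constant $M^{3/2}$ reflects that $\mathcal{N}^{(2)}_{\le M}$ is the ``low-modulation'' piece that remained \emph{before} integration by parts, and is acceptable because in the companion terms $\mathcal{N}^{(2)}_2$ and $\mathcal{N}^{(2)}_3$ (the high-modulation pieces obtained by a second IBP) one gains a balancing negative power of $M$ from the denominator $\Omega^{(2)}_2$.
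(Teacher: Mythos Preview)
Your region-$A$ argument contains a genuine error: from $\jb{\xi}\sim\jb{\xi_3}$ you obtain $\|J^{s+\delta}V_3\|_{L^2}=\|v_3\|_{H^{s+\delta}}$ and then assert that this is dominated by $\|v_3\|_{H^s}$ ``since $\delta<s$''. That implication is false for $\delta>0$; the $H^{s+\delta}$-norm is stronger, not weaker, than the $H^s$-norm. The easy repair (and what the paper does) is to also use $\jb{\xi}\lesssim\jb{\xi_1}$, splitting the weight as $\jb{\xi}^{s+\delta}\lesssim \jb{\xi_1}^{s}\jb{\xi_3}^{\delta}$; then the $v_3$-factor only carries $\|v_3\|_{H^{\delta}}\le \|v_3\|_{H^s}$, and this is precisely where the hypothesis $\delta<s$ enters.

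Your region-$B$ plan is too vague to constitute a proof, and it misses the structural simplification that drives the argument. You propose a Cauchy--Schwarz/volume estimate based on $\partial_{\xi_3}\Omega^{(2)}_2$ together with ``two further crude $M^{1/2}$ factors absorbing the excess derivative $\jb{\xi_1}^{\delta}$''; but there is no a priori relation bounding $\jb{\xi_1}$ by a power of $M$ on $B$, so it is unclear how $\jb{\xi_1}^{\delta}$ could be traded for $M^{1/2}$. The paper's route is much more direct: on the branch $\{\xi_2<0,\xi_3<0\}$ one has $\Omega(\xi_{23},\xi_2,\xi_3)=-2\xi_2\xi_3<0$, and since also $\Omega(\xi,\xi_1,\xi_{23})=2\xi\xi_{23}<-M$, their sum satisfies $\Omega^{(2)}_2<-M$, so this branch is \emph{empty} on $B$. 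On the remaining (symmetric) branches $\Omega^{(2)}_2=2\xi_{12}\xi_{23}$, and the constraint $|\xi_{12}|>1$ forces $|\xi_{23}|\le M/2$. Hence the multiplier itself satisfies $|\xi_{23}/\xi_1|\lesssim M\jb{\xi_1}^{-1}$, which absorbs the derivative excess via $\jb{\xi}^{s+\delta}\jb{\xi_1}^{-1}\lesssim \jb{\xi_1}^{s+\delta-1}$ and $\|J^{s+\delta-1}V_1\|_{L^\infty}\lesssim\|v_1\|_{H^s}$ (using $\delta<\tfrac12$); the remaining $M^{1/2}$ comes from Bernstein on $P_{\le M}(V_2V_3)$. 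No volume bound or frequency-space Cauchy--Schwarz is needed.
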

\begin{proof}
 
\noindent 
If $|\xi_{12}|\le1$, we easily have $\jb{\xi_3}\sim \jb{\xi} <\jb{\xi_1}$ (see \eqref{freqineqN22}) and thus 
\begin{align*}
\big\|\mathcal{N}^{(2)}_{\le M} (v_1, v_2,v_3) \big\|_{H^{s+\delta}} 
&\lesssim \|\PLO\big( (J^sV_1) V_2 \big) (J^{\delta} V_3) \|_{L^2} 
 \le  \|\PLO\big( (J^s  V_1) V_2\big) \|_{L^{\infty}} \|J^{\delta} V_3\|_{L^2}\\
& \lesssim  \|  (J^s  V_1) V_2 \|_{L^1} \|V_3\|_{H^{\delta}} 
 \lesssim  \|v_1\|_{H^s} \|v_2\|_{L^2} \|v_3\|_{H^{\delta}}\,.
\end{align*}

\noindent
Now assume that $|\xi_{12}|>1$ and $ | \Omega^{(2)}_2(\xi,\xi_1,\xi_2,\xi_3) | \le M$, 
where we recall that 
$\Omega^{(2)}_2(\xi,\xi_{1},\xi_{2}, \xi_3) = \Omega(\xi,\xi_{1},\xi_{23}) +\Omega(\xi_{23},\xi_2,\xi_3)$. 
We recall here that  the multiplier of $\mathcal{N}^{(2)}_{\le M}$ is 
$\mathfrak{m}_*^{(2)}\ind_{R_{\le M}^{(2)}}$.
Notice that on the first branch of \eqref{Omega1in2ndstep}, 
i.e. when $\xi_2<0$ and $\xi_3<0$ the conditions  
\begin{align*} 
| \Omega^{(2)}_2(\xi,\xi_1,\xi_2,\xi_3) | \le M\  \Leftrightarrow\  |\xi \xi_{23} - \xi_2\xi_3| \le \frac{M}{2}
\end{align*}
 and 
 \begin{align*}
 |\Omega(\xi,\xi_1,\xi_{23})|>M \  \Leftrightarrow\  |\xi \xi_{23}|>\frac{M}{2} 
 \end{align*}
 cannot hold simultaneously. 
Hence it remains to discuss the third branch of \eqref{Omega1in2ndstep}, i.e. 
$\xi_2<0$ and $\xi_3\ge0$ (the second branch follows by the symmetry of the multiplier in $\xi_2, \xi_3$). 
In this case we have $\Omega^{(2)}_2(\xi,\xi_1,\xi_2,\xi_3) = 2\xi_{12}\xi_{23}$ and thus 
$| \xi_{23}| \le \frac{M}{2}$. Since on the support of $\mathfrak{m}^{(2)}_{\le M}$ 
we also have 
$\xi<\xi_1$ It follows that 
\begin{align*}
\big\|\mathcal{N}^{(2)}_{\le M} (v_1, v_2,v_3) \big\|_{H^{s+\delta}} 
&\lesssim M \big\| \big(J^{s+\delta-1}V_1 \big) P_{\le M}(V_2 V_3)\big\|_{L^2} 
\le M \big\| J^{s+\delta-1}V_1 \big\|_{L^{\infty}} \big\|P_{\le M}(V_2 V_3)\big\|_{L^2} \\
&\lesssim  M^{\frac32} \big\| J^{s}V_1 \big\|_{L^2} \big\|V_2 V_3\big\|_{L^1} 
\le M^{\frac32} \|v_1\|_{H^s} \|v_2\|_{L^2} \|v_3\|_{L^2} \,.
\end{align*}
\end{proof}

\begin{remark}
A version of the estimate above with $\delta=0$ follows analogously to 
\cite[Lemma~2.3]{KwonOhYoon} taking into account that $\big|\frac{\xi_{23}}{\xi_1} \big|<1$ 
on the support of 
$\mathfrak{m}^{(2)}_{\le M}$. 
However, here we exploit that $\big|\frac{\xi_{23}}{\xi_1} \big|\lesssim \frac{M}{\jb{\xi_1}}$ and this allows us to obtain the estimate of $\mathcal{N}^{(2)}_{\le M}$ in the $H^{s+\delta}$-norm 
(albeit at the cost of a higher power on $M$ in the right-hand side). 
\end{remark}


Formally, we summarize the normal form reductions in this first step: 
\begin{gather}
\begin{split}
\wt w(t) - \wt w_0 =&\int_0^t  \mathcal{N}^{(1)}_{\le M}(\wt w, \wt u)(t') dt' + \int_0^t \wt E(t')dt' + 
 \big[\mathcal{N}^{(1)}_{0}(\wt w, \wt u)(t')\big]_{t'=0}^{t'=t} \\
 &\ - \sum_{N_{12}}\sum_{N_3\lesssim N_{12}} \Bigg\{ 
    \sum_{N_1}\sum_{N_2\lesssim N_1} \int_0^t\!\mathcal{N}^{(2)}_{1} (\wt w, \wt u, \wt u)(t')dt' 
    +  \int_0^t\!\mathcal{N}^{(1)}_{0}(\wt E_{N_{12}}, \wt u_{N_3})(t') dt' \Bigg\} \\
 &\ -\sum_{N_1}\sum_{N_{23} \lesssim N_1}\Bigg\{
\sum_{N_2}\sum_{N_3}  \int_0^t  \mathcal{N}^{(2)}_{\le M} (\wt w, \wt u, \wt u) dt' \\
 &\qquad\qquad \qquad
+ \sum_{N_2}\sum_{N_3} \int_0^t  \mathcal{N}^{(2)}_{2} (\wt w, \wt u, \wt u) dt' 
+ \sum_{N_2}\sum_{N_3} \int_0^t  \mathcal{N}^{(2)}_{3} (\wt w, \wt u, \wt u) dt' \Bigg\}
 \end{split}
\end{gather}
Notice that 
due to Lemma~\ref{lem:bdrystep1} 
and Lemma~\ref{lem:N2leM} (applied with $s-\delta'$ and $\delta+\delta'$ for some small enough $\delta'>0$), respectively, we have 
\begin{align*}
 \sum_{N_{12}}\sum_{N_3\lesssim N_{12}} \! \int_0^t \!
 \big\|\mathcal{N}^{(1)}_{0}(\wt E_{N_{12}}, \wt u_{N_3})(t')\big\|_{H^{s+\delta}} dt'  
  & \lesssim  T M^{-\frac18+\frac{\delta+\delta'}{4}}
   \sum_{N_{12}} \sum_{N_3 \lesssim N_{12}}\!\!N_{12}^{-\delta'} \|\wt E\|_{H^s} \|\wt u\|_{L^2}\\
   & \lesssim  TM^{-\frac18+\frac{\delta+\delta'}{4}} \|\wt E\|_{H^s} \|\wt u\|_{L^2}
\end{align*}
and 
\begin{gather}
\begin{split}
\label{summationofNlesM2}
& \sum_{N_{1}}\sum_{N_{23}\lesssim N_{1}} \sum_{N_2}\sum_{N_3}  \! \int_0^t \!
 \big\| \mathcal{N}^{(2)}_{\le M} (\wt w, \wt u, \wt u) (t')\big\|_{H^{s+\delta}} dt'  \\
 &\qquad\  \lesssim  TM^{\frac32} \sum_{N_{1}}\sum_{N_{23}\lesssim N_{1}} 
    \sum_{N_2}\sum_{N_3} 
    N_1^{-\delta'} N_2^{-\delta'} N_3^{-\delta'} \|\wt w\|_{H^s}  \|\wt u\|^2_{H^s} 
     \lesssim  TM^{\frac32} \|\wt w\|_{H^s}  \|\wt u\|^2_{H^s} \,.
\end{split}
\end{gather}
Thus, 
it remains to handle the terms 
$\mathcal{N}^{(2)}_1(\wt w, \wt u, \wt u)$, $\mathcal{N}^{(2)}_{2}(\wt w, \wt u, \wt u)$,  and 
$\mathcal{N}^{(2)}_{3}(\wt w, \wt u, \wt u)$. 
These terms are all nonresonant and therefore we can proceed with a second step of  integration by parts in time. 

\subsection{Second step}

Let us recall here the terms to which we have to apply a second step of integration by parts in time, their phases and their multiplier symbols on the Fourier side: 
\begin{align*}
 \mathcal{F}\big(\mathcal{N}^{(2)}_{j}(v_1,v_2,v_3)\big)(\xi) 
&= \int_{\xi=\xi_{123}} \hspace{-3mm} e^{it\Omega^{(2)}_j(\xi,\xi_1,\xi_2,\xi_3)} 
\mathfrak{m}_j^{(2)}(\xi,\xi_1,\xi_2,\xi_3) \,
\wh{v_1}(\xi_1) \wh{v_2}(\xi_2)  \wh{v_3}(\xi_3) \,d\xi_1 d\xi_2 \,,
\end{align*}
$ j=1,2,3$, 
respectively with phases given by 
\begin{align*}
&\Omega^{(2)}_1(\xi,\xi_1,\xi_2,\xi_3) =2 \xi\xi_3 + 2\xi_{12}\xi_2 \,,\\
&\Omega^{(2)}_2(\xi,\xi_1,\xi_2,\xi_3) = 2\xi\xi_{23} -2 \xi_{2}\xi_3 \,,\\
&\Omega^{(2)}_3(\xi,\xi_1,\xi_2,\xi_3)  = 2\xi\xi_{23} -2 \xi_{3}\xi_{23}=2\xi_{12}\xi_{23} \,,
\end{align*}
and multipliers 
given by
\begin{align*}
\mathfrak{m}^{(2)}_1(\xi,\xi_1,\xi_2,\xi_3) 
  = & -2i \frac{ \xi_2}{\xi_1} \ind_{| \xi\xi_3|>\frac{M}{2}}\ind_{| \xi\xi_3 + \xi_{12}\xi_2|>\frac{M}{2}} 
\chi_+(\xi)  \chi_+(\xi_{12})^2  \chi_+(\xi_1)  \ind_{<0}(\xi_2)   \ind_{<0}(\xi_3) \\
 &\quad \times \psi_{N_{12}}(\xi_{12}) \psi_{N_1}(\xi_1) \psi_{N_2}(\xi_2)   \psi_{N_3}(\xi_3) 
\,,\\
\mathfrak{m}^{(2)}_2(\xi,\xi_1,\xi_2,\xi_3)
 =& i \frac{ \xi_{23}}{\xi_1} \ind_{| \xi\xi_{23}|>\frac{M}{2}}\ind_{|\xi\xi_{23} - \xi_{2}\xi_3|>\frac{M}{2}}  
\ind_{|\xi_{12}|>1}
\chi_+(\xi)  \chi_+(\xi_1) \ind_{<0}(\xi_2)  \ind_{<0}(\xi_3)    \\
&\quad \times \psi_{N_1}(\xi_1) \psi_{N_{23}}(\xi_{23}) \psi_{N_2}(\xi_2) \psi_{N_3}(\xi_3) 
\,, \\
\mathfrak{m}^{(2)}_3(\xi,\xi_1,\xi_2,\xi_3) 
=& i \frac{ \xi_{23}}{\xi_1}  \ind_{| \xi\xi_{23}|>\frac{M}{2}} \ind_{|\xi\xi_{23} - \xi_{3}\xi_{23}|>\frac{M}{2}} 
 \ind_{|\xi_{12}|>1} \chi_+(\xi)  \chi_+(\xi_1) \ind_{<0}(\xi_2)  \ind_{\ge0}(\xi_3)  \ind_{<0}(\xi_{23})  \\
  &\quad \times \psi_{N_1}(\xi_1) \psi_{N_{23}}(\xi_{23}) \psi_{N_2}(\xi_2) \psi_{N_3}(\xi_3) 
  \,.
\end{align*}


\noindent
Since $\wt w$ and $\wt u$ factors are all localized in frequency, 
the interchange of the time integral and frequency integrals is justified
by \eqref{1stNFRdtw:checkingabsconv} (for $j=1$) and \eqref{1stNFRdtu:checkingabsconv} (for $j=2,3$), 
and thus
after applying integration by parts in time, 
together with Lemma~\ref{lem:C1nessint}, 
we get 
\begin{align}
\label{2ndNFR:IbP}
\mathcal{N}^{(2)}_j(\wt w, \wt u, \wt u)  
 = \dt \mathcal{N}^{(2)}_{j,0}(\wt w, \wt u, \wt u) - \mathcal{N}^{(2)}_{j,0}(\dt \wt w, \wt u,  \wt u)
  - \mathcal{N}^{(2)}_{j,0}(\wt w,\dt \wt u, \wt u) - \mathcal{N}^{(2)}_{j,0}(\wt w, \wt u, \dt\wt u)\,,
\end{align}
where 
\begin{align*}
\mathcal{F}\big( \mathcal{N}^{(2)}_{j,0}(v_1,v_2, v_3) \big)(\xi) &= 
\int_{\xi=\xi_{123}} e^{it \Omega^{(2)}_j(\xi,\xi_1,\xi_2,\xi_3)} 
 \frac{\mathfrak{m}^{(2)}_j(\xi,\xi_1,\xi_2,\xi_3)}{i \Omega^{(2)}_j(\xi,\xi_1,\xi_2,\xi_3)} 
 \wh{v_1}(\xi_1) \wh{v_2}(\xi_2)  \wh{v_3}(\xi_3) d\xi_1 d\xi_2 
\,.
\end{align*}

The estimates for the boundary terms appearing in the second step are provided by the following: 

\begin{lemma}
\label{lem:bdryterms2ndstep}
Let $s\ge 0$. 
We have the following estimates pointwise in time: 
\begin{equation*}
\Big\|\mathcal{N}^{(2)}_{j,0}(v_1,v_2,v_3) \Big\|_{H^{s+\frac12}} \lesssim 
 M^{-\frac14+} 
 \|v_1\|_{H^s} \|v_2\|_{L^2}  \|v_3\|_{L^2} 
 \ ,\ j=1,2,3\,.
\end{equation*}
\end{lemma}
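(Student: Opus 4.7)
Since the three cases $j=1,2,3$ are structurally parallel, we focus on $j=1$ and indicate the minor changes for $j=2,3$ at the end. The starting observation is that on the support of $\mathfrak{m}^{(2)}_1$, the indicator $\ind_{|\xi\xi_3+\xi_{12}\xi_2|>M/2}$ forces $|\Omega^{(2)}_1|>M$; the sign restrictions ($\xi,\xi_{12},\xi_1>0$ and $\xi_2,\xi_3<0$) give
$$|\Omega^{(2)}_1| = 2\xi|\xi_3|+2\xi_{12}|\xi_2|\geq \xi_{12}|\xi_2|,$$
and combining with \eqref{freqineqN21} also $|\Omega^{(2)}_1|\lesssim |\xi_1|^2$, so that $|\xi_1|\gtrsim \sqrt M$ on the support.

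The core of the plan is to interpolate $|\Omega^{(2)}_1|^{-1}\leq M^{-1/4+\epsilon}|\Omega^{(2)}_1|^{-3/4-\epsilon}$ (small $\epsilon>0$) and apply $|\Omega^{(2)}_1|^{-3/4-\epsilon}\leq (\xi_{12}|\xi_2|)^{-3/4-\epsilon}$. Together with $\langle\xi\rangle\lesssim \xi_{12}$ and $\xi_{12}\gtrsim 1$, this produces the symbol bound
$$\langle\xi\rangle^{s+1/2}\Big|\frac{\mathfrak{m}^{(2)}_1}{\Omega^{(2)}_1}\Big|\lesssim M^{-1/4+\epsilon}\frac{|\xi_2|^{1/4-\epsilon}}{|\xi_1|} \qquad (\text{for } s\leq 1/4),$$
with an analogous bound for $s>1/4$ obtained by pulling $\xi_{12}^{s-1/4}\leq \xi_1^{s-1/4}$ onto the $v_1$-side. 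After Plancherel, the estimate reduces to bounding $\|(D^{-1}V_1)(D^{1/4-\epsilon}V_2)V_3\|_{L^2}$ (where $V_j := \mathcal{F}^{-1}|\widehat{v_j}|$) by $\|v_1\|_{H^s}\|v_2\|_{L^2}\|v_3\|_{L^2}$ up to the $M^{0+}$ loss. This is achieved via a dyadic Littlewood--Paley decomposition: Bernstein yields $\|D^{-1}P_{N_1}V_1\|_{L^\infty}\lesssim N_1^{-1/2}\|P_{N_1}V_1\|_{L^2}$, the support constraints force $N_2,N_3\lesssim N_1$, and the key lower bound $N_1\gtrsim \sqrt M$ converts $N_1^{-1/2}$ into the desired $M^{-1/4}$ factor; Hölder combined with dyadic Bernstein on $v_2$ and $v_3$ (to absorb the derivative $D^{1/4-\epsilon}$) followed by Cauchy--Schwarz in $N_1$ produces $\|v_1\|_{H^s}$.

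The main obstacle is the fractional derivative $D^{1/4-\epsilon}$ acting on $v_2\in L^2$: since the Sobolev embedding $H^{1/2+}\hookrightarrow L^\infty$ is unavailable at this regularity, a naive three-factor Hölder does not close, and the estimate survives only by simultaneously exploiting $N_2\lesssim N_1$ and $N_1\gtrsim \sqrt M$, the $M^{0+}$ loss arising from a logarithmic divergence of the dyadic sum in $N_1$. The cases $j=2,3$ follow the same scheme: in $R^{(2)}_2$ one has $|\Omega^{(2)}_2| = 2\xi|\xi_{23}|+2|\xi_2||\xi_3|\geq \xi|\xi_{23}|$, while for $j=3$ the phase factors directly as $|\Omega^{(2)}_3|=2\xi_{12}|\xi_{23}|$, yielding the cleanest version of the argument with $|\Omega^{(2)}_3|>M$ immediately exploitable.
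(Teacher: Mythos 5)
Your starting observations in the first paragraph are correct, and the symbol bound you derive,
\begin{equation*}
\jb{\xi}^{s+\frac12}\Big|\frac{\mathfrak{m}^{(2)}_1}{\Omega^{(2)}_1}\Big|
  \lesssim M^{-\frac14+\epsilon}\,\frac{|\xi_2|^{\frac14-\epsilon}}{|\xi_1|}
  \qquad(s\le\tfrac14),
\end{equation*}
is indeed a valid consequence of $|\Omega^{(2)}_1|\ge 2\xi_{12}|\xi_2|$, $|\Omega^{(2)}_1|>M$, and $\xi<\xi_{12}$. The gap is in the functional estimate that follows. You have already spent the full $M^{-\frac14+\epsilon}$ on the symbol interpolation, so the remaining trilinear quantity $\|(D^{-1}V_1)(D^{\frac14-\epsilon}V_2)V_3\|_{L^2}$ must now be bounded by $\|v_1\|_{H^s}\|v_2\|_{L^2}\|v_3\|_{L^2}$ \emph{without} further help from $M$. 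Tracking your dyadic Littlewood--Paley scheme, with any three-factor H\"older split compatible with $V_2,V_3\in L^2$ (e.g.\ $L^\infty\times L^4\times L^4$), Bernstein produces $N_1^{-\frac12-s}\,N_2^{\frac12-\epsilon}\,N_3^{\frac14}$ on a single dyadic block; after summing $N_2,N_3\lesssim N_1$ by Cauchy--Schwarz this reduces to $\sum_{N_1\gtrsim\sqrt M}N_1^{\frac14-s-\epsilon}\|P_{N_1}V_1\|_{H^s}$, which is \emph{polynomially} (not logarithmically) divergent whenever $s<\frac14-\epsilon$. Since the lemma is claimed for every $s\ge0$, including $s=0$, this is a genuine failure; the lower cap $N_1\gtrsim\sqrt M$ only removes small frequencies and does not tame the high-$N_1$ tail. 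The residual $|\xi_2|^{\frac14-\epsilon}$ in your symbol bound is exactly the culprit: one cannot spend a quarter of a derivative on $v_2$ when $v_2$ is only $L^2$.

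The fix is to choose the balance differently. Do \emph{not} interpolate $|\Omega^{(2)}_1|^{-1}$ against $M$; instead use the full cancellation $|\Omega^{(2)}_1|^{-1}\le(2\xi_{12}|\xi_2|)^{-1}$ to kill the $|\xi_2|$ in the numerator of $\mathfrak m^{(2)}_1$ outright, leaving
\begin{equation*}
\jb{\xi}^{s+\frac12}\Big|\frac{\mathfrak{m}^{(2)}_1}{\Omega^{(2)}_1}\Big|
  \lesssim \frac{\jb{\xi}^{s+\frac12}}{\xi_1\,\xi_{12}} .
\end{equation*}
Now exploit your own observation $\jb{\xi_1}\gtrsim\sqrt M$ (and likewise $\jb{\xi_{12}}\gtrsim\sqrt M$, since $\xi_{12}^2\ge\xi_{12}|\xi_2|>M/2$) on this remaining factor: writing $\jb{\xi_1}^{-1}\lesssim M^{-\frac14}\jb{\xi_1}^{-\frac12}$ and using $\xi<\xi_{12}\le\xi_1$ to shuffle powers, one obtains a bound of the form $M^{-\frac14+}\jb{\xi_{12}}^{-1+}\jb{\xi_1}^{s-}$. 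The crucial point is that the negative power now sits on the \emph{pair} variable $\xi_{12}$ rather than on $\xi_2$ alone: after Plancherel one is left with $\|J^{-1+}\big((J^{s-}V_1)V_2\big)V_3\|_{L^2}$, which closes by elementary H\"older and Sobolev (pull $V_3$ out in $L^2$, bound $J^{-1+}:L^{1+}\to L^\infty$, then H\"older and $H^s\hookrightarrow L^{2+}$ on the $V_1$-factor), with no dyadic sum and no restriction on $s\ge0$. This is precisely what the paper's proof does. For $j=2,3$ you have already recorded the right lower bounds on $|\Omega^{(2)}_j|$; the same rebalancing applies.
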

\begin{proof}
One checks that for each $j=1,2,3$  we have $M\lesssim \xi_1^2$. 
Let us also denote $V_k:=  \mathcal{F}^{-1} \big( \big|\mathcal{F} (v_k) \big|\big)$, $k=1,2,3$. 
If $j=1$ and $j=3$, we have 
\begin{align*}
\bigg|\frac{\mathfrak{m}^{(2)}_j(\xi,\xi_1,\xi_2,\xi_3)}{\Omega^{(2)}_j(\xi,\xi_1,\xi_2,\xi_3)} \bigg| 
 \jb{\xi}^{s+\frac12}
  \lesssim \xi_1^{-\frac12+s} \xi_{12}^{-1}\lesssim 
  M^{-\frac14+}   \jb{\xi_1}^{s-} \jb{\xi_{12}}^{-1+} \,.
\end{align*}
Then, by H\"{o}lder's inequality and Sobolev embedding,  
\begin{align*}
\Big\|\mathcal{N}^{(3)}_{j,0}(v_1,v_2,v_3) \Big\|_{H^{s+\frac12}}
 &\lesssim M^{-\frac14+} \big\|
   J^{-1+} \big( J^{s-} V_1 V_2 \big) V_3 \big\|_{L^2}\\
 &\lesssim M^{-\frac14+}\big\| J^{-1+} \big( J^{s-} V_1 V_2 \big)  \big\|_{L^{\infty}} 
      \| v_3 \big\|_{L^2}\\
 &\lesssim M^{-\frac14+} \big\| J^{s-} V_1 V_2\big\|_{L^{1+}} \| v_3 \big\|_{L^2}\\
 &\lesssim M^{-\frac14+}
   \big\| J^{s-} V_1\big\|_{L^{2+}}  \|v_2 \big\|_{L^2} \| v_3 \|_{L^2}\\
 &\lesssim M^{-\frac14+}
   \big\|  v_1\big\|_{H^s}  \|v_2 \big\|_{L^2} \| v_3 \|_{L^2}\,.
\end{align*}
If $j=2$, we have 
\begin{align*}
\bigg|\frac{\mathfrak{m}^{(2)}_2(\xi,\xi_1,\xi_2,\xi_3)}{\Omega^{(2)}_2(\xi,\xi_1,\xi_2,\xi_3)} \bigg| 
 \jb{\xi}^{s+\frac12}
  \lesssim \xi_1^{-1} \xi^{-\frac12+s} \lesssim  M^{-\frac14+}   \jb{\xi_1}^{-\frac12+s-} \jb{\xi}^{-\frac12-} \,.
 \end{align*}
Then, by H\"{o}lder's inequality and Sobolev embedding,  
\begin{align*}
\Big\|\mathcal{N}^{(3)}_{2,0}(v_1,v_2,v_3) \Big\|_{H^{s+\frac12}}
 &\lesssim M^{-\frac14+} \big\| J^{-\frac12-} \big( J^{-\frac12+s-} V_1 V_2 V_3\big)  \big\|_{L^2}\\
  &\lesssim M^{-\frac14+} \big\| J^{-\frac12+s-} V_1 V_2 V_3 \big\|_{L^{1}}\\
  &\lesssim M^{-\frac14+} \big\| J^{-\frac12+s-} V_1 \big\|_{L_x^{\infty}} \|V_2\|_{L_x^2} \|V_3\|_{L_x^2}\\
  &\lesssim M^{-\frac14+}  \big\|  v_1\big\|_{H^s}  \|v_2 \big\|_{L^2} \| v_3 \|_{L^2}\,.
\end{align*}
\end{proof}


Lastly, the following estimates together with those of Lemma~\ref{lem:lemmaA}
allow us to also handle the remaining terms on the right hand side of \eqref{2ndNFR:IbP}: 
\begin{lemma}
\label{lem:newEstNj02}
Let $0<s<\frac14$ and $\delta\ge 0$.  We have the following estimate for $j=1,2,3$ and any $\theta>0$
\begin{gather}
\begin{split}
& \sup_N \big\|P_N \mathcal{N}^{(2)}_{j,0}(P_{N_1}v_1,P_{N_2}v_2, P_{N_3}v_3) \big\|_{H^{s+\delta}} \\
 &\qquad \lesssim 
 N_{max}^{-1-s+\delta+\theta} N_{med}^s N_{min}^s 
 \|P_{N_1} v_1\|_{L^2} \|P_{N_2}v_2\|_{L^2}  \|P_{N_3}v_3\|_{L^2} \,,
\end{split}
\end{gather}
where $N_{max}, N_{med}, N_{min}$ are the maximum, the median, and the minimum of $N_1, N_2, N_3$, respectively. 
\end{lemma}
\begin{proof}
Due to Plancherel's identity, we have that 
\begin{gather}
\begin{split}
\label{estlemA}
&  \big\|P_N \mathcal{N}^{(2)}_{j,0}(P_{N_1}v_1,P_{N_2}v_2, P_{N_3}v_3) \big\|_{H^{s+\delta}} \\
&\  \lesssim N^{s+\delta} \Bigg\| \int_{\xi=\xi_{123}}
  \left| \frac{\mathfrak{m}^{(2)}_j(\xi,\xi_1,\xi_2,\xi_3)}{\Omega^{(2)}_j(\xi,\xi_1,\xi_2,\xi_3)} \right|
  \psi_{N_1}(\xi_1)|\wh{v_1}(\xi_1)|  \psi_{N_2}(\xi_2)|\wh{v_2}(\xi_2)| \psi_{N_3}(\xi_3) | \wh{v_3}(\xi_3)| 
    d\xi_1 d\xi_2   \Bigg\|_{L^2_{\xi}}
\end{split}
\end{gather}
and that the supremum in $N$ is taken over all dyadic numbers $\ge \frac12$ 
(due to the restriction on $\xi$ in $\mathfrak{m}^{(2)}_j$).  
As before, we denote $V_k:=  \mathcal{F}^{-1} \big( \big|\mathcal{F} (v_k) \big|\big)$, $k=1,2,3$. 
We discuss separately the three cases depending on $j$.\\ 
\noindent
\textbf{Case 1: $j=1$.} We recall that the multiplier 
$\frac{m^{(2)}_1(\xi,\xi_1,\xi_2,\xi_3)}{\Omega^{(2)}_1(\xi,\xi_1,\xi_2,\xi_3)}$ is 
\begin{align*}
 \frac{-i\xi_2}{\xi_1(\xi\xi_3 + \xi_{12}\xi_2)}
  \ind_{| \xi\xi_3|>\frac{M}{2}}\ind_{| \xi\xi_3 + \xi_{12}\xi_2|>\frac{M}{2}} 
\chi_+(\xi)  \chi_+(\xi_{12})^2  \chi_+(\xi_1)  \ind_{<0}(\xi_2)   \ind_{<0}(\xi_3) 
  \,.
\end{align*}
Due to the sign restrictions,
we have  $\max\{\xi,|\xi_3|\} < \xi_{12} < \xi_1$ and $|\xi_2| < \xi_1$.\\

\textbf{Subcase 1.i: $|\xi_{12}|\sim |\xi_1|$.} 
In this case, 
\begin{align*}
\Bigg| \frac{m^{(2)}_1(\xi,\xi_1,\xi_2,\xi_3)}{\Omega^{(2)}_1(\xi,\xi_1,\xi_2,\xi_3)} \Bigg| 
\lesssim \xi_1^{-2} 
\end{align*}
and thus, we deduce from Berstein's inequality that 
\begin{align*}
\textup{LHS of \eqref{estlemA}} & \lesssim N_1^{-2+s+\delta} 
 \big\|P_N\big(P_{N_1}V_1 P_{N_2}V_2 P_{N_3}V_3 \big)\big\|_{L_x^2}\\
 & \lesssim N_1^{-\frac32+s+\delta} 
 \|P_{N_1}V_1\|_{L_x^{\frac{1}{2s}}}  \|P_{N_2}V_2\|_{L_x^{\frac{2}{1-2s}}} 
   \|P_{N_3}V_3\|_{L_x^{\frac{2}{1-2s}}}\\
 & \lesssim N_1^{-1-s+\delta} N_2^s N_3^s  \prod_{j=1}^3 \|P_{N_j}v_j\|_{L_x^2}
   \,.
\end{align*} \\
\textbf{Subcase 1.ii: $|\xi_{12}|\ll |\xi_1|$ and $\xi\lesssim |\xi_3|$.}
In this case we necessarily have $|\xi_2|\sim \xi_1$. We use 
\begin{align*}
\Bigg| \frac{m^{(2)}_1(\xi,\xi_1,\xi_2,\xi_3)}{\Omega^{(2)}_1(\xi,\xi_1,\xi_2,\xi_3)} \Bigg| 
\lesssim \xi_1^{-1} \xi_{12}^{-1}
\end{align*}
to get that 
\begin{align*}
\textup{LHS of \eqref{estlemA}} & \lesssim N^{s+\delta} N_1^{-1} \sum_{K\lesssim N_1} K^{-1}  
 \Big\|P_N\Big(P_K\big( P_{N_1}V_1 P_{N_2}V_2 \big) P_{N_3}V_3 \Big)\Big\|_{L_x^2}\\
&  \lesssim N_3^{s} N_1^{-1+\delta} \sum_{K\lesssim N_1}
   \big\|P_K\big( P_{N_1}V_1 P_{N_2}V_2 \big) \big\|_{L_x^{1}} \|P_{N_3}V_3\|_{L_x^2}\\
 &  \lesssim N_1^{-1-s+\delta+\theta} N_2^s N_3^s \prod_{j=1}^3 \|P_{N_j}v_j\|_{L_x^2}
\,,
\end{align*}
for any $\theta>0$.\\
\textbf{Subcase 1.iii: $|\xi_{12}|\ll |\xi_1|$ and $\xi\gg |\xi_3|$.} 
Note that in this case $|\xi_2|\sim \xi_1$ and $\xi_{12}\sim \xi$. Then, 
we have
\begin{align*}
\Bigg| \frac{m^{(2)}_1(\xi,\xi_1,\xi_2,\xi_3)}{\Omega^{(2)}_1(\xi,\xi_1,\xi_2,\xi_3)} \Bigg| 
\lesssim \xi_1^{-1} \xi^{-\frac12} \xi_{12}^{-\frac12} \,.
\end{align*}
It follows that
\begin{align*}
\textup{LHS of \eqref{estlemA}} & \lesssim N^{-\frac12+s+\delta} N_1^{-1} \sum_{K \sim N} K^{-\frac12}
   \Big\|P_N\Big(P_K\big( P_{N_1}V_1 P_{N_2}V_2 \big) P_{N_3}V_3 \Big)\Big\|_{L_x^2}\\
  &\lesssim N^{\delta} N_1^{-1} \sum_{K\sim N} K^{-\frac12}
   \Big\|P_N\Big(P_K\big( P_{N_1}V_1 P_{N_2}V_2 \big) P_{N_3}V_3 \Big)\Big\|_{L_x^{\frac{1}{1-s}}}\\
   &\lesssim  N_1^{-1+\delta}   \sum_{K\sim N} K^{-\frac12}
    \big\|P_K\big( P_{N_1}V_1 P_{N_2}V_2 \big) \big\|_{L_x^2} \|P_{N_3}V_3\|_{L_x^{\frac{2}{1-2s}}}\\
  &\lesssim  N_1^{-1+\delta} N_3^s  \|P_{N_1}V_1 P_{N_2}V_2\|_{L_x^1} \|P_{N_3}V_3\|_{L_x^2}\\
  &\lesssim  N_1^{-1-s+\delta} N_2^s N_3^s  \prod_{j=1}^3 \|P_{N_j}v_j\|_{L_x^2}
  \,.
\end{align*}
%
%
\textbf{Case 2: $j=2$.} We recall that the multiplier 
$\frac{m^{(2)}_2(\xi,\xi_1,\xi_2,\xi_3)}{\Omega^{(2)}_2(\xi,\xi_1,\xi_2,\xi_3)}$ is 
\begin{align*}
 \frac{i\xi_{23}}{2\xi_1(\xi\xi_{23}- \xi_2\xi_3)} 
    \ind_{| \xi\xi_{23}|>\frac{M}{2}}\ind_{|\xi\xi_{23} - \xi_{2}\xi_3|>\frac{M}{2}}  
\ind_{|\xi_{12}|>1} \chi_+(\xi)  \chi_+(\xi_1) \ind_{<0}(\xi_2)  \ind_{<0}(\xi_3) 
 \,.
\end{align*}
Due to the sign restrictions, we have  $\max\{|\xi_2|,|\xi_3|\}<|\xi_{23}| <\xi_1$ and $\xi<\xi_1$. 
Without loss of generality we may assume that $|\xi_2|\le |\xi_3|$ so that $|\xi_{23}|\le 2|\xi_3|$ 
and note that 
$N_{\max} = N_1$, $N_{\textup{med}}=N_3$, and $N_{\min} =N_2$. 
Then 
\begin{align*}
\Bigg| \frac{m^{(2)}_2(\xi,\xi_1,\xi_2,\xi_3)}{\Omega^{(2)}_2(\xi,\xi_1,\xi_2,\xi_3)} \Bigg| 
\lesssim \frac{|\xi_{23}|^{\frac12-s}}{\xi_1 \xi^{\frac12+s} |\xi_2 \xi_3|^{\frac12-s}} 
\lesssim  \frac{1}{\xi_1 \xi^{\frac12+s} |\xi_2 |^{\frac12-s}} \,.
\end{align*}
\noindent
\textbf{Subcase 2.i: $\xi\sim \xi_1$ (i.e. $N\sim N_1$).} We have
\begin{align*}
\textup{LHS of \eqref{estlemA}}& \lesssim N_1^{-\frac32+\delta} N_2^{-\frac12 + s}
   \big\|P_N\big(P_{N_1}V_1 P_{N_2}V_2 P_{N_3}V_3 \big)\big\|_{L_x^2}\\
  &\lesssim N_1^{-\frac32+\delta} N_2^{-\frac12 + s}
  \|P_{N_1}V_1\|_{L_x^{\frac1s}}  \|P_{N_2}V_2\|_{L_x^{\infty}}  \|P_{N_3}V_3\|_{L_x^{\frac{2}{1-2s}}}\\
&\lesssim  N_1^{-1-s+\delta} N_2^s N_3^s  \prod_{j=1}^3 \|P_{N_j}v_j\|_{L_x^2}
\,.
\end{align*}
\textbf{Subcase 2.ii: $\xi\ll \xi_1$.} Then we necessarily have $\xi_1\sim |\xi_{23}|$. 
Since $\xi_2$ and $\xi_3$ have the same sign, 
we cannot have that $|\xi_3|\ll \xi_1$, and thus $|\xi_3|\sim \xi_1$ (or equivalently $N_3\sim N_1$).  
Then
\begin{align*}
\textup{LHS of \eqref{estlemA}}& \lesssim N^{\delta}N_1^{-1} N_2^{-\frac12 + s}
   \big\|P_N\big(P_{N_1}V_1 P_{N_2}V_2 P_{N_3}V_3 \big)\big\|_{L_x^1}\\
   &\lesssim N_1^{-1+\delta}  N_2^{-\frac12 + s}
    \|P_{N_1}V_1\|_{L_x^2}  \|P_{N_2}V_2\|_{L_x^{\infty}}  \|P_{N_3}V_3\|_{L_x^2}\\
    &\lesssim N_1^{-1-s+\delta}  N_2^{s} N_3^s \prod_{j=1}^3 \|P_{N_j}v_j\|_{L_x^2}\,.
\end{align*}
\textbf{Case 3: $j=3$.} We recall that 
$\frac{m^{(2)}_3(\xi,\xi_1,\xi_2,\xi_3)}{\Omega^{(2)}_3(\xi,\xi_1,\xi_2,\xi_3)}$ is 
\begin{align*}
\frac{i}{2\xi_1\xi_{12}} \ind_{| \xi\xi_{23}|>\frac{M}{2}} \ind_{|\xi_{12}\xi_{23}|>\frac{M}{2}}
  \ind_{|\xi_{12}|>1}   \chi_+(\xi)  \chi_+(\xi_1) \ind_{<0}(\xi_2)  \ind_{\ge0}(\xi_3)  \ind_{<0}(\xi_{23})   \,.
\end{align*}
Due to the sign restrictions, we have $\max\{|\xi_{23}|,\xi\} <\xi_1$ and $\xi_3 <|\xi_2|$. 
Clearly, it holds that 
\begin{align*}
\Bigg| \frac{m^{(2)}_{3}(\xi,\xi_1,\xi_2,\xi_3)}{\Omega^{(2)}_{3}(\xi,\xi_1,\xi_2,\xi_3)} \Bigg| 
\lesssim \jb{\xi_1}^{-1} \jb{\xi_{12}}^{-1} \,.
\end{align*}
\textbf{Subcase 3.i: $|\xi_2|\ll \xi_1$.} Observe that $\xi_1 \sim |\xi_{12}| \sim \xi $, and  
$N_{\max} =N_1$, $N_{\textup{med}} = N_2$, $N_{\min}=N_3$. Then, $\textup{LHS of \eqref{estlemA}}$ is estimated exactly as in Subcase 1.i.\\
\textbf{Subcase 3.ii: $|\xi_2|\sim  \xi_1$ and $\xi\lesssim \xi_3$.} In this case we have
$N_{\min}=N_3$ and $N_{\max}\sim N_{\textup{med}}\sim N_1\sim N_2$ 
and thus the $\textup{LHS of \eqref{estlemA}}$ is estimated exactly as in Subcase 1.ii.\\
\textbf{Subcase 3.iii: $|\xi_2|\sim  \xi_1$ and $\xi\gg \xi_3$.} 
Then $\xi\sim|\xi_{12}|$ and $N_{\min}=N_3$ and $N_{\max}\sim N_{\textup{med}}\sim N_1\sim N_2$. 
Therefore, the $\textup{LHS of \eqref{estlemA}}$ is estimated exactly as in Subcase 1.iii.\\
\textbf{Subcase 3.iv: $|\xi_2|\gg  \xi_1$.} Then $|\xi_2|\sim \xi_3\sim |\xi_{12}|$ and 
note that $N_{\max} = N_2$, $N_{\textup{med}}=N_3$, and $N_{\min} =N_1$. 
It follows that 
\begin{align*}
\textup{LHS of \eqref{estlemA}}& \lesssim N^{s+\delta+\frac12} N_1^{-1} N_2^{-1} 
 \big\|P_N\big(P_{N_1}V_1 P_{N_2}V_2 P_{N_3}V_3 \big)\big\|_{L_x^1}\\
 &\lesssim N_1^{-\frac12+s} N_2^{-1+\delta} 
    \|P_{N_1}V_1\|_{L_x^{\infty}} \|P_{N_2}V_2\|_{L_x^2}  \|P_{N_3}V_3\|_{L_x^2} \\
 &\lesssim N_1^sN_2^{-1-s+\delta} N_3^s   \prod_{j=1}^3 \|P_{N_j}v_j\|_{L_x^2} 
  \,.
\end{align*}
\end{proof}

For the following corollary to  the above Lemma~\ref{lem:newEstNj02} we recall that the multipliers 
$\mathfrak{m}_j^{(2)}$ depend on the dyadic numbers $N_1, N_2, N_3,N_{12}, N_{23}$, which are all dominated by $N_{\max}$.

\begin{corollary}
\label{cor:3p12}
Let $3-\sqrt{33/4}<s \le \frac14$. There exist  $\delta, \varepsilon >0$ small enough such that 
for each $j=1,2,3$, we have 
\begin{gather}
\begin{split}
& \big\| \mathcal{N}^{(2)}_{j} (\wt w, \wt u, \wt u )(t') \big\|_{L_T^1 H^{s+\delta}}\\
&\quad   \lesssim 
    N_{\max}^{-\varepsilon} 
    \Big(M^{-\frac18} + T \big(1+ \|u\|_{L_T^{\infty}H^s}\big)^{6}\Big)
  \big( \|w \|_{L_T^{\infty}H^s} + \| u \|_{L_T^{\infty}H^s}\big)  \| u\|^2_{L_{T}^{\infty}H^s}  
   \,,
\end{split}
\end{gather}
where $N_{\max}=\max\{N_1,N_2, N_3\}$. 
\end{corollary}
\begin{proof}
For the first term on the right-hand side of \eqref{2ndNFR:IbP} we simply invoke Lemma~\ref{lem:bdryterms2ndstep}.  
For the remaining terms,  
we apply Lemma~\ref{lem:newEstNj02} followed 
by Lemma~\ref{lem:lemmaA} together with Lemma~\ref{MP:lem2p7}. 
For instance, we get that for any $\theta>0$
\begin{gather}
\begin{split}
& \big\| P_N \mathcal{N}^{(2)}_{j,0} (\dt \wt w, \wt u, \wt u )(t') \big\|_{L_T^1H^{s+\delta}}\\
 &\ \  \lesssim N_{max}^{-1-s+\delta+\theta} N_{med}^s N_{min}^s  \|P_{N_1}\dt \wt w\|_{L_T^1L_x^2} 
    \|P_{N_2}\wt u\|_{L_T^{\infty}L_x^2} \|P_{N_3}\wt u\|_{L_T^{\infty}L_x^2} \\
  &\ \ \lesssim T N_1^{\frac2q+\frac12} N_{max}^{-1-s+\delta+\theta} N_{med}^s N_{min}^s  
  \big(1+\|u\|_{L_T^{\infty}H^s}\big)^6\\
 &\hspace{3cm} \times 
 \Big( \|u\|_{L_T^{\infty}H^s}   +  \|w\|_{L_T^{\infty}H^s}  \Big) 
  \|P_{N_2}u\|_{L_T^{\infty}L_x^2}  \|P_{N_3} u\|_{L_T^{\infty}L_x^2} \,,
 \end{split}
\end{gather}
where $N_{\text{med}}$ and $N_{\min}$ are as in  Lemma~\ref{lem:newEstNj02} and 
$2\le q\le 4$ satisfies the hypothesis of Lemma~\ref{lem:lemmaA}, 
i.e. 
\begin{equation}
\label{keycond1}
\left( \frac32-s\right) \left(\frac14 -\frac{1}{2q}\right) - s<0 \,.
\end{equation}
In the case that $N_{\max}=N_1$, we easily 
notice that if we also impose 
\begin{equation}
\label{keycond2}
\frac2q - \frac12 - s+\delta+\theta < -2\varepsilon \,,
\end{equation}
then 
\begin{align*}
 \big\| P_N \mathcal{N}^{(2)}_{j,0} (\dt \wt w, \wt u, \wt u )(t') \big\|_{L_T^1H^{s+\delta}} 
 &\lesssim T N_{\max}^{-\varepsilon} N^{-\varepsilon} \big(1+\|u\|_{L_T^{\infty}H^s}\big)^6 \|w\|_{L_T^{\infty}H^s}  \|u\|^2_{L_T^{\infty}H^s}
  \end{align*}
since on the support of the multiplier $\mathfrak{m}_j^{(2)}$ we have 
$ \xi<\xi_1$,  hence $N\lesssim N_1 \le N_{\max}$, for all $j=1,2,3$. 
In the case that $N_{\max}=N_2$, we clearly have 
$N_{\textup{med}}^s N_{\min}^s \le N_{\max}^s N_3^s$, and thus 
$$N_1^{\frac2q+\frac12} N_{max}^{-1-s+\delta+\theta} N_{med}^s N_{min}^s   \le 
 N_{\max}^{\frac2q - \frac12 - s+\delta+\theta} N_2^s N_3^s\lesssim
 N_{\max}^{-\varepsilon}N^{-\varepsilon}   \,,$$
by imposing the same condition \eqref{keycond2}. 
The case  $N_{\max}=N_3$ is analogous to the case  $N_{\max}=N_2$.
Lastly, the estimates for the remaining  two terms in \eqref{2ndNFR:IbP} follow similarly. 
Hence, we can pick $q$, $\delta$, and $\varepsilon$ to satisfy both \eqref{keycond1} and \eqref{keycond2} 
as long as 
\begin{equation} \label{main:cond}
s^2-6s+\frac34 <0 \,.
\end{equation}
\end{proof}

\begin{proof}[Proof of Proposition~\ref{prop:sect3}] 
It follows by gathering 
Lemmata~\ref{lem:1stresonant}, \ref{lem:bdrystep1}, \ref{lem:N2leM}, 
and Corollary~\ref{cor:3p12}. 
Indeed, we can now check that all the summation-integrals that appear 
while performing the above two steps of normal form reductions 
 are absolutely convergent. 
 Therefore, we can also regroup similar terms to derive the equation below 
 in order to provide the desired difference estimate 
\begin{gather}
\begin{split}
&\wt w_1(t) - \wt w_2(t) = \\ 
 & \int_0^t \bigg\{ \mathcal{N}^{(1)}_{\le M}(\wt w_1, \wt u_1)(t') 
   -\mathcal{N}^{(1)}_{\le M}(\wt w_2, \wt u_2)(t')\bigg\} dt' 
+ \int_0^t \big\{ \wt E_1(t') - \wt E_2(t') \big\} dt' \\
& + \Big[\mathcal{N}^{(1)}_{0}(\wt w_1, \wt u_1)(t')- \mathcal{N}^{(1)}_{0}(\wt w_2, \wt u_2)(t')\Big]_{t'=0}^{t'=t} \\
 & - \sum_{N_{12}}\sum_{N_3\lesssim N_{12}} \sum_{N_1}\sum_{N_2\lesssim N_1}
  \int_0^t\!  \bigg\{ \mathcal{N}^{(2)}_{1} (\wt w_1, \wt u_1, \wt u_1)(t') 
   -  \mathcal{N}^{(2)}_{1} (\wt w_2, \wt u_2, \wt u_2)(t') \bigg\} dt'  \\
  & - \sum_{N_{12}}\sum_{N_3\lesssim N_{12}} \int_0^t \bigg\{
   \mathcal{N}^{(1)}_{0}(\wt {E_1}_{N_{12}}, \wt {u_1}_{N_3})(t') 
  -\mathcal{N}^{(1)}_{0}(\wt {E_2}_{N_{12}}, \wt {u_2}_{N_3})(t')\bigg\} dt'\\
 &\ -\sum_{N_1}\sum_{N_{23}\lesssim N_1} \sum_{N_2}\sum_{N_3} 
   \int_0^t  \bigg\{\mathcal{N}^{(2)}_{\le M} (\wt w_1, \wt u_1, \wt u_1)(t')  
   - \mathcal{N}^{(2)}_{\le M} (\wt w_2, \wt u_2, \wt u_2)(t')  \bigg\}dt' \\
&\ + \sum_{N_1}\sum_{N_{23}\lesssim N_1}  \sum_{N_2}\sum_{N_3} \int_0^t
  \bigg\{ \mathcal{N}^{(2)}_{2} (\wt w_1, \wt u_1, \wt u_1)(t') 
   -  \mathcal{N}^{(2)}_{2} (\wt w_2, \wt u_2, \wt u_2) (t')\bigg\} dt' \\
&\ + \sum_{N_1}\sum_{N_{23}\lesssim N_1}  \sum_{N_2}\sum_{N_3} \int_0^t
  \bigg\{ \mathcal{N}^{(2)}_3 (\wt w_1, \wt u_1, \wt u_1)(t') 
   -  \mathcal{N}^{(2)}_3 (\wt w_2, \wt u_2, \wt u_2) (t')\bigg\} dt' \,.
 \end{split}
\end{gather}
By using telescoping sums and a version of Corollary~\ref{cor:3p12} 
when the difference of two solutions appears as one of the arguments, 
we handle the series containing $\mathcal{N}_{j}^{(2)}(\wt w_k, \wt u_k,\wt u_k)$, $k=1,2$, $j=1,2,3$. 
The summations containing $\wt{E_k}_{N_{12}}$, $k=1,2$, are in fact finite summations as 
$\wt{E_k}$ have compact Fourier suppport. 
Lastly, by arguing similarly to  \eqref{summationofNlesM2}, we have that 
the series containing $\mathcal{N}_{\le M}^{(2)}(\wt w_k, \wt u_k,\wt u_k)$, $k=1,2$, 
are absolutely convergent in $C_TH^{s}$. 
\end{proof}

\section{Proof of Theorem~\ref{mainthm} and Corollary~\ref{maincoro}}

\begin{proof}[Proof of Theorem~\ref{mainthm}]
Let 
$u_0\in H^s$ and let 
$u \in C(\R; H^s)$ denote the (global-in-time) solution  to BO with initial data $u_0$ 
provided by the results in \cite{MolinetPilodAPDE12} or \cite{IonescuKenigBO} or \cite{IfrimTataruBO}. 
Suppose there exists another solution 
$u^{\dagger} \in C(I; H^s)$ to BO (not necessarily global-in-time), with the same initial data $u_0$, 
on some open time interval $I$, neighborhood of $t=0$. 
By the time translation symmetry of BO, we can assume without loss of generality that 
$\max \{ t\in I : u(t) = u^{\dagger}(t)\} = 0 $ 
and thus to reach a contradiction, it suffices to show that $u = u^{\dagger}$ in $C_TH^s$ for any small $T>0$.   
By the time reversal symmetry of BO, one argues analogously for negative times.  

Denote by $F, w$ and $F^{\dagger}, w^{\dagger}$ 
the corresponding spatial antiderivatives and gauge transformations of $u$ and $u^{\dagger}$, respectively. 
We fix some $T'\in I\cap (0,1)$ and 
we set 
$$\wt{K}:=(1+C_2) \big(1+\|u\|_{C_{T'}H^s} + \|u^{\dagger}\|_{C_{T'}H^s}\big)^2<\infty\,,$$ 
where $C_2$ denotes the implicit constant in \eqref{diffinus:est2}. 
By first choosing $N\in 2^{\Z}$ such that 
$$C_2 \wt{K}^2\big(N^{s-\frac12} + \|P_{>\frac{N}{2}} w^{\dagger}\|_{C_{T'}H^s} \big)\le \frac14$$ 
and then by choosing $0<T<T'$ such that 
$$C_1 \wt{K} T N^{\frac32 +s} \le \frac14\,,$$
where $C_1$ is the implicit constants in
\eqref{diffinus:est1}, 
Lemma~\ref{lem:diffinus} implies that 
\begin{align}
\label{pfmainthm:star}
\|u - u^{\dagger}\|_{C_TH^s} \le 2C_2 \wt{K} \|w - w^{\dagger}\|_{C_{T}H^s}\,.
\end{align}
Since both $w$ and $w^{\dagger}$ satisfy the integral formulation of \eqref{2ndNFR:IbP}, 
we can appeal to Proposition~\ref{prop:sect3} and thus 
there is some $C_3>0$ such that 
\begin{align}
\label{pfmainthm:interm}
\| w- w^{\dagger}\|_{C_TH^s} \le 
  C_3   \big( T M^{\frac32} + M^{-\frac{1}{16}} \big) \wt{K}^{10}
    \big( \| w- w^{\dagger}\|_{C_TH^s} + \| u- u^{\dagger}\|_{C_TH^s} \big) \, .
\end{align}
With $\beta\in(0,1)$ such that 
\begin{align}
\label{pfmainthm:beta} 
 2 C_2 \wt{K} \frac{\beta}{1-\beta} \le \frac12 \,,
\end{align}
choose $M\gg 1$ such that 
\begin{align*}
C_3 M^{-\frac{1}{16}}  \wt{K}^{10} \le \frac{\beta}{2} 
\end{align*}
and then we adjust $T$  such that we also verify 
\begin{align*}
C_3  T M^{\frac32}  \wt{K}^{10} \le \frac{\beta}{2}  \,.
\end{align*}
Then, from \eqref{pfmainthm:interm} we have
\begin{align}
\label{pfmainthm:dstar}
\| w- w^{\dagger}\|_{C_TH^s} \le 
  \frac{\beta}{1-\beta} \| u- u^{\dagger}\|_{C_TH^s}
\end{align}
Hence, by \eqref{pfmainthm:star}, \eqref{pfmainthm:beta}, and  \eqref{pfmainthm:dstar}, 
we get 
$ \| u- u^{\dagger}\|_{C_TH^s}  = 0  $, which completes the proof of Theorem~\ref{mainthm}.
\end{proof}

\begin{proof}[Proof of Corollary~\ref{maincoro}] 
We recall here that $w= e^{it\dx^2} \wt w$ and that $\wt w$ satisfies  the normal form equation  
\begin{gather}
\begin{split}
\wt w(t) - \wt w_0 = 
 & \int_0^t  \mathcal{N}^{(1)}_{\le M}(\wt w, \wt u)(t') dt' 
+ \int_0^t  \wt E(t') dt' + \Big[\mathcal{N}^{(1)}_{0}(\wt w, \wt u)(t')\Big]_{t'=0}^{t'=t} \\
 & - \sum_{N_{12}}\sum_{N_3\lesssim N_{12}} \sum_{N_1}\sum_{N_2\lesssim N_1}
  \int_0^t  \mathcal{N}^{(2)}_{1} (\wt w, \wt u, \wt u)(t')  dt'  \\
  & - \sum_{N_{12}}\sum_{N_3\lesssim N_{12}} \int_0^t 
   \mathcal{N}^{(1)}_{0}(\wt {E}_{N_{12}}, \wt {u}_{N_3})(t') dt'\\
 &\ -\sum_{N_1}\sum_{N_{23}\lesssim N_1} \sum_{N_2}\sum_{N_3} 
   \int_0^t \mathcal{N}^{(2)}_{\le M} (\wt w, \wt u, \wt u)(t')  dt' \\
&\ + \sum_{N_1}\sum_{N_{23}\lesssim N_1}  \sum_{N_2}\sum_{N_3} \int_0^t
   \mathcal{N}^{(2)}_{2} (\wt w, \wt u, \wt u)(t') dt' \\
&\ + \sum_{N_1}\sum_{N_{23}\lesssim N_1}  \sum_{N_2}\sum_{N_3} \int_0^t
  \mathcal{N}^{(2)}_3 (\wt w, \wt u, \wt u)(t') dt' \,.
 \end{split}
\end{gather}
Therefore, by setting
$M:=T^{-\frac{16}{25}}$,
by Lemmata ~\ref{lem:1stresonant}, \ref{lem:bdrystep1}, \ref{lem:N2leM}, 
and Corollary~\ref{cor:3p12}, we get 
\begin{align*}
& \|w(t) - e^{it\dx^2} w_0\|_{H^{s+\delta}} = \|\wt w(t) - \wt w(0)\|_{H^{s+\delta}}
 \lesssim  T^{\frac{1}{25}} \big(1+\|u\|_{C_{T}H^s} \big)^{20} \| u\|_{C_TH^s} \,.
\end{align*}
Due to the uniqueness result of Theorem~\ref{mainthm} and since $\|u\|_{C_TH^s}\le C(T, \|u_0\|_{H^s})$ (see for example Theorem~1.1 in \cite{IonescuKenigBO}), we conclude the proof of Corollary~\ref{maincoro}. 
\end{proof}

\section{The periodic case} \label{Sec:periodic}

Here we consider the Benjamin-Ono equation \eqref{BO} posed on the torus $\T:=\R/2\pi \Z$. We point out the main modifications needed to obtain the unconditional uniqueness result of Theorem~\ref{mainthm}. 

\subsection{The gauge transformation}
Since the Benjamin-Ono evolution conserves the mean, i.e. 
$\int_{\T}u(t,x)dx = \int_{\T}u_0(x)dx$ for all $t$, by using the translation transformation 
$$\wt u(t,x) := u\Big(t,x- \frac{t}{\pi}\int_{\T} u_0\Big)-\frac1{2\pi}\int_{\T} u_0 , $$
we can assume without loss of generality  
that 
$$\int_{\T} u(t,x) dx =0\ ,\ \text{ for all } t\,.$$ 
We then define $F:=\dx^{-1}u$ the spatial anti-derivative of $u$ by 
$$  \widehat{F}(0) =0 \quad ,\quad \widehat{F}(n)= \frac{1}{i n} \widehat u(n) \ ,\ n\in\Z^* $$
and note that in place of Lemma~\ref{lem:diffFs}, we easily have 
$\|F_1-F_2\|_{L^{\infty}(\T)} \lesssim \|u_1-u_2\|_{L^2(\T)}$ 
with a constant independent of $t$. 
Moreover, we denote 
\[ P_{\pm}(f)= \mathcal{F}^{-1}\left( \textbf{1}_{\pm n>0} \widehat{f}\right) \quad \text{and} \quad P_0(f)= \mathcal{F}^{-1}\left( \textbf{1}_{n=0} \widehat{f}\right)=\widehat{f}(0) . \]
Since $\widehat{e^{-iF}}(n)$ is well-defined for all $n\in \Z$, 
 $P_{\pm}(e^{-iF})$ are well-defined $L^2(\T)$-functions with
$\|e^{-iF}\|_{L^2(\T)} \sim 1$. 
Also, note that $F$ satisfies
\begin{equation*}
\dt F+\H\dx^2 F = u^2 - P_0(u^2)\, .
\end{equation*}

The gauge transformation
\begin{equation}
\label{gaugetrT}
w:= \dx P_{+} (e^{-iF}) \,,
\end{equation}
satisfies
\begin{equation}
\label{eq:dtw_per.1}
\dt w - i\dx^2 w   -i m(t) w = -2P_+\dx\big[ \dx^{-1}w \cdot P_-\dx u\big]  \, ,
\end{equation}
where $m(t):= P_0(u^2)(t)$.

It is worth noting that at this level of regularity, the weak solutions of BO, which are not constructed through (conditional) well-posedness results, do not necessarily satisfy the $L^2$-conservation law. For this reason, we perform another gauge transformation $:w \mapsto e^{-i\int_0^tm(t')dt'}w$, so that the equation for the gauge variable becomes
\begin{equation}
\label{eq:dtw_per}
\dt w - i\dx^2 w = -2P_+\dx\big[ \dx^{-1}w \cdot P_-\dx u\big]  \, ,
\end{equation}
and, by setting $E[f,g]:= -2P_+\Plo\dx[f \cdot P_-\dx g]$,  we rewrite \eqref{eq:dtw_per} precisely as \eqref{eqw-1}. 
It is easy to check that the estimate corresponding to \eqref{est:lem2p7} also holds in this case.

\subsection{The Strichartz estimates}

\begin{lemma}[refined Strichartz estimates on the torus]
Let $0\le s\le \frac14$, $N\in 2^{\Z_+}$, $T>0$,  and $2\le p\le 4$. Let $u$ be a solution to \eqref{BO:lin} with $F=\partial_x(u_1u_2)$. Then, we have 
\begin{equation}
\label{refinedStrichartz_per}
 \|P_N u\|_{L^p([0,T]\times\T)} \lesssim 
 T^{\frac1p} N^{\beta(s,p)} \Big( \|P_N u\|_{L_T^{\infty}H_x^s}  
 + \|u_1\|_{L_T^{\infty}H_x^s}  \|u_2\|_{L_T^{\infty}H_x^s} \Big) \,,
\end{equation}
where
\begin{equation}
\label{defn:betasp}
\beta(s,p): =\big(\frac32-s\big) \big(\frac14 - \frac{1}{2p}\big) - s\,.
\end{equation}
\end{lemma}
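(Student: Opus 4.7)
The plan is to adapt the proof of Lemma~\ref{lem:refStrichartz} from the real-line case to the torus setting, the main substitute being the available Strichartz estimates. On the real line we had a full continuous family of admissible pairs $(p,q)$ with $2/p + 1/q = 1/2$, whereas on $\mathbb{T}$ the only sharp (no-loss) Strichartz estimate for the linear BO semigroup at a fixed frequency is the $L^4_{t,x}$ estimate of Bourgain (equivalent, after splitting $P_{\pm}$, to Zygmund's inequality for the linear Schr\"odinger equation on $\mathbb{T}$): for any time interval $I$ with $|I|\le 1$,
\[
\|P_N e^{-t\mathcal{H}\partial_x^2} f\|_{L^4(I\times \mathbb{T})} \lesssim \|P_N f\|_{L^2},
\]
and the corresponding inhomogeneous version applied to a solution of \eqref{BO:lin} gives $\|P_N u\|_{L^4(I\times \mathbb{T})} \lesssim \|P_N u\|_{L^\infty_{I}L^2_x} + \|P_N F\|_{L^1(I;L^2_x)}$.

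Following the real-line blueprint, I would first decompose $[0,T]$ into $\sim T N^\delta$ subintervals $I_j$ of length $N^{-\delta}$, with $\delta>0$ to be chosen at the end. On each $I_j$ I apply the Bourgain $L^4_{t,x}$ estimate above. To pass from $L^4$ to a general $L^p$ with $2\le p\le 4$, I invoke the space-time H\"older inequality (crucially using that $\mathbb{T}$ has finite measure and $|I_j|\le 1$):
\[
\|P_N u\|_{L^p(I_j\times \mathbb{T})} \lesssim |I_j|^{1/p - 1/4}\, \|P_N u\|_{L^4(I_j\times \mathbb{T})}.
\]
Raising to the $p$-th power, summing over $j$, and applying H\"older in time to the inhomogeneous sum $\sum_j \|P_N F\|_{L^1(I_j;L^2)}^p$, I obtain an estimate of the form
\[
\|P_N u\|_{L^p([0,T]\times \mathbb{T})} \lesssim T^{1/p} N^{a(\delta,p)}\|P_N u\|_{L^\infty_T L^2_x} + T^{1/p} N^{b(\delta,p)}\|P_N F\|_{L^\infty_T L^2_x}
\]
with $a,b$ explicit affine functions of $\delta$.

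The forcing term is then treated exactly as in the real-line proof: substituting $F=\partial_x(u_1u_2)$ and using Bernstein on $\mathbb{T}$ together with H\"older,
\[
\|P_N F\|_{L^2} \lesssim N\cdot N^{1/r-1/2}\|u_1\|_{H^s}\|u_2\|_{H^s} = N^{3/2-2s}\|u_1\|_{H^s}\|u_2\|_{H^s},
\]
where $s=\tfrac12-\tfrac{1}{2r}$ forces $0\le s\le\tfrac14$. Finally, I choose $\delta$ so as to equalize the two powers of $N$ on the right-hand side, yielding $\delta=\tfrac32-s$ and the common exponent $\beta(s,p)$ defined in \eqref{defn:betasp}.

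The main obstacle compared to the real-line case is that the torus admits a narrower scale of Strichartz estimates: only the $L^4_{t,x}$ endpoint is sharp, so to reach general $L^p$ one must pay a time-H\"older loss $|I_j|^{1/p-1/4}$. This is precisely what restricts the range to $2\le p\le 4$ and accounts for the fact that $\beta(s,p)$ is weaker than the corresponding real-line exponent $\alpha(s,p)$ over the regimes in which the two estimates overlap; care must be taken to keep $|I_j|\le 1$ (which in the endpoint application only matters for $\delta\ge 0$ and $N\ge 1$, both of which hold here).
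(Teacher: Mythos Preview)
Your overall structure (subinterval decomposition, treatment of $F=\partial_x(u_1u_2)$ via Bernstein/H\"older, and the final choice $\delta=\tfrac32-s$) mirrors the paper's, but the step where you pass from $L^4_{t,x}$ to $L^p_{t,x}$ by a space--time H\"older inequality is a genuine gap: it does \emph{not} produce the exponent $\beta(s,p)$. Carrying out your computation exactly as described (with $\|P_N u\|_{L^4(I_j\times\mathbb T)}\lesssim \|P_Nu(a_j)\|_{L^2}+\|P_NF\|_{L^1_{I_j}L^2}$ and then H\"older $\|\,\cdot\,\|_{L^p(I_j\times\mathbb T)}\le |I_j|^{1/p-1/4}\|\,\cdot\,\|_{L^4}$), after summing over $j$ and equalising one finds the common exponent $(\tfrac32-s)/4-s$, which is \emph{independent of $p$} and strictly larger than $\beta(s,p)$ for every $p\in[2,4]$. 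The reason is that H\"older from $L^4_{t,x}$ down to $L^p_{t,x}$ throws away the $L^2_x$-unitarity of the propagator; you only use the finite measure of $I_j\times\mathbb T$.

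What the paper does instead is to \emph{interpolate} the Zygmund $L^4$ Strichartz estimate (with the short-time gain $T^{1/8}$, following Molinet--Ribaud) against the trivial $L^2_t L^2_x$ isometry $\|e^{t\mathcal H\partial_x^2}f\|_{L^2([0,T]\times\mathbb T)}=T^{1/2}\|f\|_{L^2}$, obtaining $\|e^{t\mathcal H\partial_x^2}f\|_{L^p([0,T]\times\mathbb T)}\lesssim T^{3/(2p)-1/4}\|f\|_{L^2}$ for $2\le p\le 4$. It is this $p$-dependent time factor, evaluated at $|I_j|\sim N^{-\delta}$, that yields the claimed $\beta(s,p)=(\tfrac32-s)(\tfrac14-\tfrac{1}{2p})-s$. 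Even if you grant yourself the $T^{1/8}$ gain at $L^4$ and then H\"older down, you only recover $\beta(s,4)$ for all $p$, still strictly worse than $\beta(s,p)$ for $p<4$. Since the downstream nonlinear estimates (Section~5.3 of the paper) need the full $p$-dependence of $\beta(s,p)$ to push below $s=\tfrac16$, replacing interpolation by H\"older here is not a harmless simplification.
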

\begin{proof}
Following the proof of \cite[Lemma~2.1]{MolinetRibaud}, we use the $L^4$-Strichartz estimate due to Zygmund \cite{Zygmund74} to deduce 
\begin{equation*}
\big\| e^{t\H\dx^2} f \big\|_{L^4([0,T]\times\T)} \lesssim T^{\frac18} \|f\|_{L_x^2(\T)} \,.
\end{equation*}
After interpolating with the trivial estimate 
\begin{equation*}
\big\| e^{t\H\dx^2} f \big\|_{L^2([0,T]\times\T)} \lesssim T^{\frac12} \|f\|_{L_x^2(\T)} \,, 
\end{equation*}
this implies 
\begin{equation} \label{strich:BO_per}
\big\| e^{t\H\dx^2} f \big\|_{L^p([0,T]\times\T)} \lesssim T^{\frac3{2p}-\frac14} \|f\|_{L_x^2(\T)} \,,
\end{equation}
for any $2 \le p \le 4$.

The proof then follows similarly to the proof of Lemma~\ref{lem:refStrichartz}. With $\delta>0$ to be chosen later, 
let $I_j = :[a_j, b_j]$ be such that $\bigcup_j I_j = [0,T]$, $b_j-a_j\sim N^{-\delta}$, and 
the number of such intervals is $\sim TN^{\delta}$. 
We then deduce from \eqref{strich:BO_per} 
\begin{align*}
\|P_N u\|^p_{L^p([0,T]\times\T)} &= \sum_j \int_{a_j}^{b_j} \|P_N u\|_{L_x^p}^p dt  \\
&\lesssim  TN^{\delta(1-\frac32+\frac{p}4)} \|P_N u\|_{L_T^{\infty}L_x^2}^p + \sum_j |I_j|^{p-1}|I_j|^{\frac32-\frac{p}4}  \|P_N F\|_{L_{I_j}^pL_x^2}^p,
\end{align*}
so that
\begin{equation*}
\|P_N u\|_{L^p([0,T]\times \mathbb T)} \lesssim 
 T^{\frac1p} N^{(-\frac{1}{2p}+\frac14)\delta} \|P_N u\|_{L_T^{\infty}L_x^2}  
 +T^{\frac1p} N^{-\big( \frac34+ \frac{1}{2p}\big)\delta} \|P_NF\|_{L_T^{\infty}L_x^2} \,.
\end{equation*}


\noindent
In particular, for 
\begin{equation*}
F=\dx(u_1 u_2)\,,
\end{equation*}
 we get
\begin{equation*}
\|P_N u\|_{L^p([0,T]\times \mathbb T)} \lesssim 
 T^{\frac1p} N^{(-\frac{1}{2p}+\frac14)\delta-s} \|P_N u\|_{L_T^{\infty}H_x^s}  
 +T^{\frac1p} N^{1-\big( \frac34+ \frac{1}{2p}\big)\delta} \|P_N(u_1u_2)\|_{L_T^{\infty}L_x^2} \,.
\end{equation*}
Together with 
\begin{equation*}
 \|P_N(u_1u_2)\|_{L_x^2} \lesssim  N^{\frac1r - \frac12} \|u_1u_2\|_{L_x^r}
   \le N^{\frac1r - \frac12}  \|u_1\|_{L_x^{2r}} \|u_2\|_{L_x^{2r}} 
 \lesssim N^{\frac1r - \frac12} \|u_1\|_{H_x^s} \|u_2\|_{H_x^s} \,,
\end{equation*}
where $1\le r\le 2$ is determined by $s=\frac12 - \frac{1}{2r}$, or equivalently $r= \frac{1}{1-2s}$, 
we obtain
\begin{equation*}
\|P_N u\|_{L^p([0,T]\times \mathbb T)} \lesssim 
 T^{\frac1p} N^{(-\frac{1}{2p}+\frac14)\delta-s} \|P_N u\|_{L_T^{\infty}H_x^s}  
 +T^{\frac1p} N^{\frac32 -\big( \frac34+ \frac{1}{2p}\big)\delta-2s} 
 \|u_1\|_{L_T^{\infty}H_x^s}  \|u_2\|_{L_T^{\infty}H_x^s} 
\end{equation*}
(the restriction on $r$ imposes $0\le s\le \frac14$). 
We choose $\delta$ such that 
$$(-\frac{1}{2p}+\frac14)\delta-s = \frac32 -\big( \frac34+ \frac{1}{2p}\big)\delta-2s, $$ or equivalently $\delta = \frac32-s$, 
and with 
$\beta(s,p)$ as in \eqref{defn:betasp},
we obtain 
\eqref{refinedStrichartz_per}. 
\end{proof}

\subsection{Nonlinear estimates and proofs of the main results}

We note that the normal form transformations as well as the nonlinear estimates 
from Section~\ref{normalform:sec} carry over exactly as in the real-line case. 
The proof of the estimates of Lemma~\ref{lem:lemmaA} is similar, 
but now using the refined Strichartz estimate \eqref{refinedStrichartz_per} instead of \eqref{refinedStrichartz}. 
For the  convenience of the reader we check the numerology of Corollary~\ref{cor:3p12} and verify that the same regularity condition is necessary. 
Indeed, we must ensure 
\begin{align*}
\beta(s,p)<0 \text{ and } \delta+\frac2p -\frac12 < s
\end{align*}
(to be compared with \eqref{keycond1} and \eqref{keycond2} in the real-line case), or equivalently
$$ (\frac32-s)(\frac14-\frac{1}{2p}) -s<0 \text{ and } \frac14-\frac{1}{2p}> \frac18-\frac{s}{4}$$
which hold true for some $p\in (2,4)$ under the same condition \eqref{main:cond}. 

The proof of Theorem~\ref{mainthm} is then carried out by reasoning on equation~\eqref{eq:dtw_per} and by arguing exactly as on the real line case.

Finally, we comment on the proof of Corollary~\ref{maincoro} in the periodic case. With Theorem~\ref{mainthm} in hand, one can assume that any solution at this level of regularity is constructed through a (conditional) well-posedness result. In particular, by invoking the $L^2$-conservation law, we can then rigorously justify that  $m(t)=m_0:=P_0(u_0^2)$, so that the equation for the gauge variable \eqref{eq:dtw_per.1} becomes
\begin{equation} 
\label{eq:dtw_per.2}
\dt w - i\dx^2 w  -im_0 w = -2P_+\dx\big[ \dx^{-1}w \cdot P_-\dx u\big]  \, .
\end{equation}
 Since the $L^p([0,T]\times\T)$-norms are invariant under 
the transformation $:w\mapsto e^{im_0t} w$, 
one also has the same Strichartz estimate for $P_N w$ as in Lemma~\ref{defn:alphasp} (ii). Therefore, we conclude the proof of Corollary~\ref{maincoro} in the periodic case by arguing exactly as in the continuous case.

\medskip
\section*{Acknowledgements} 

The authors were supported by a Trond Mohn Foundation grant. R.M. would like to thank Tadahiro Oh for suggesting the problem.  The authors are very grateful to the anonymous referees for many important comments and suggestions which greatly improved the present version. In particular, following one of the referees' suggestions, the proof of Proposition \ref{prop:sect3} has been reorganised much more concisely.

\medskip

\bibliographystyle{siam}
\bibliography{}

\begin{thebibliography}{10}

\bibitem{ABFS}
{\sc L.~Abdelouhab, J.~Bona, M.~Felland, and J.-C. Saut}, {\em Nonlocal models
  for nonlinear dispersive waves}, Physica D., 40 (1989), pp.~360--392.

\bibitem{AblowitzFokas}
{\sc M.~Ablowitz and A.~Fokas}, {\em The inverse scattering transform for the
  {B}enjamin-{O}no equation, a pivot for multidimensional problems}, Stud.
  Appl. Math, 68 (1983), pp.~1--10.

\bibitem{BabinIlyinTiti}
{\sc A.~Babin, A.~Ilyin, and E.~Titi}, {\em On the regularization mechanism for
  the periodic {K}orteweg-de {V}ries equation}, Comm. Pure Appl. Math., 64
  (2011), pp.~591--648.
  
 \bibitem{BCajm99}
 {\sc H. Bahouri and J.-C. Chemin,}
 {\em \'Equations d'ondes quasilin\'eaires et estimations de Strichartz,}
  Amer. J. Math., 121 (1999), 1337--1377.

\bibitem{Benjamin}
{\sc T.~Benjamin}, {\em Internal waves of permanent form in fluids of great
  depth}, J. Fluid Mech., 29 (1967), pp.~559--592.

\bibitem{BiagioniLinares}
{\sc H.~Biagioni and F.~Linares}, {\em Ill-posedness for the derivative
  {S}chr{\"o}dinger and generalized {B}enjamin-{O}no equations}, Trans. Amer.
  Math. Soc., 353 (2001), pp.~3649--3659.

\bibitem{BGTajm04}
{\sc N.~Burq, P.~G{\'e}rard, and N.~Tzvetkov}, {\em Strichartz inequalities and
  the nonlinear {S}chr{\"{o}}dinger equation on compact manifolds}, Amer. J.
  Math., 126 (2004), pp.~569--605.
  
\bibitem{BurqPlanchonBOenergy06}
{\sc N.~Burq and F.~Planchon},
{\em The Benjamin-Ono equation in the energy space,}
Phase space analysis of partial differential equations, 55--62,
Progr. Nonlinear Differential Equations Appl.,  \textbf{69}, Birkh{\"a}user Boston,
Boston, MA, 2006.

\bibitem{BurqPlanchonBO}
\leavevmode\vrule height 2pt depth -1.6pt width 23pt, {\em On well-posedness for the {B}enjamin--{O}no
  equation}, Math. Ann., 340 (2008), pp.~497--542.

\bibitem{Correia}
{\sc S.~Correia}, {\em Nonlinear smoothing and unconditional uniqueness for the
  {B}enjamin--{O}no equation in weighted {S}obolev spaces}, Nonlinear Analysis,
  205 (2021).
  
 \bibitem{ErdoganTzirakisIMRN}
 {\sc M.~Erdo\u{g}an and N.~Tzirakis,}
 {\em Global smoothing for the periodic KdV evolution,}
 Int. Math. Res. Not. IMRN, 20 (2013), pp.~4589--4614.
 
 \bibitem{ErdoganTzirakisMRL}
 \leavevmode\vrule height 2pt depth -1.6pt width 23pt, 
 {\em Talbot effect for the cubic non-linear Schr\"odinger equation on the torus,}
 Math. Res. Lett., 20 (2013), pp.~1081--1090.

\bibitem{ErdoganTzirakisDNLShalfline}
{\sc M.~Erdo\u{g}an, B.~G{\"u}rel, and N.~Tzirakis}, {\em The derivative
  nonlinear {S}chr{\"{o}}dinger equation on the half line}, Ann. Inst. H.
  Poincar\'e Anal. Non Lin\'eaire, 35 (2018), pp.~1947--1973.

\bibitem{FonsecaLinaresPonce}
{\sc G.~Fonseca, F.~Linares, and G.~Ponce}, {\em The {IVP} for the {B}enjamin-{O}no
  equation in weighted {S}obolev spaces {II}}, J. Funct. Anal., 262 (2012),
  pp.~2031--2049.

\bibitem{FonsecaPonce}
{\sc G.~Fonseca and G.~Ponce}, {\em The {IVP} for the {B}enjamin--{O}no
  equation in weighted {S}obolev spaces}, J. Funct. Anal., 260 (2011),
  pp.~436--459.

\bibitem{FPT}
{\sc G.~Furioli, F.~Planchon, and E.~Terraneo}, {\em Unconditional
  well-posedness for semi-linear {S}chr{\"{o}}dinger equations in $H^s$}, in
  Harmonic Analysis at Mount Holyoke (South Hadley, MA, 2001), Contemporary
  Mathematics, vol.~320, 2003, pp.~147--156.
  
  \bibitem{GTcpam2021}
  {\sc P.~G{\'e}rard and T.~Kappeler}, 
  {\em On the integrability of the Benjamin-Ono equation on the torus,}
  Comm. Pure Appl. Math., 74 (2021), no. 8, 1685--1747. 

\bibitem{GKTflow}
{\sc P.~G{\'e}rard, T.~Kappeler, and P.~Topalov}, {\em On the flow map of
  the {B}enjamin-{O}no equation on the torus}, preprint (2019), arXiv:1909.07314.

\bibitem{GKTwp}
\leavevmode\vrule height 2pt depth -1.6pt width 23pt, {\em Sharp well-posedness
  results of the {B}enjamin-{O}no equation in $H^s(\mathbb{T},\mathbb{R})$ and
  qualitative properties of its solution}, preprint (2020), arXiv:2004.0485, to appear in Acta. Math.
  
  \bibitem{GKTbirkhoff}
\leavevmode\vrule height 2pt depth -1.6pt width 23pt, {\em On the analytic
  {B}irkhoff normal form of the {B}enjamin-{O}no equation and applications},
  Nonlinear Anal., 216 (2022), Paper No. 112687, 32 pp.
  
  \bibitem{GKTreview}
\leavevmode\vrule height 2pt depth -1.6pt width 23pt, 
{\em On the {B}enjamin-{O}no
  equation on $\mathbb{T}$ and its periodic and quasiperiodic solutions},
  preprint (2021), arXiv:2103.09291.
  
 \bibitem{GKTsmoothing}
 \leavevmode\vrule height 2pt depth -1.6pt width 23pt, 
{\em On smoothing properties and {T}ao's gauge transform of the {B}enjamin-{O}no equation on the torus}, 
 preprint (2021), arXiv:2109.00610

\bibitem{GuoKwonOh}
{\sc Z.~Guo, S.~Kwon, and T.~Oh}, {\em Poincar\'{e}--{D}ulac normal form
  reduction for unconditional well-posedness of the periodic cubic {N}{L}{S}},
  Comm. Math. Phys., 322 (2013), pp.~19--48.

\bibitem{HerrSohinger}
{\sc S.~Herr and V.~Sohinger}, {\em Unconditional uniqueness results for the
  nonlinear {S}chr{\"{o}}dinger equation}, Commun. Contemp. Math., 21 (2019).

\bibitem{IfrimTataruBO}
{\sc M.~Ifrim and D.~Tataru}, {\em Well-posedness and dispersive decay of small
  data solutions for the {B}enjamin-{O}no equation}, Ann. Sci. Ecole Norm. S.,
  52 (2019), pp.~297--335.

\bibitem{IonescuKenigBO}
{\sc A.~Ionescu and C.~Kenig}, {\em Global well-posedness of the
  {B}enjamin--{O}no equation in low-regularity spaces}, J. Amer. Math. Soc., 20
  (2007), pp.~753--798.

\bibitem{Iorio86}
{\sc R.~J. I{\'o}rio, Jr}, {\em On the {C}auchy problem for the
  {B}enjamin-{O}no equation}, Comm. Part. Diff. Eqs., 11 (1986),
  pp.~1031--1081.

\bibitem{IMOS}
{\sc B.~Isom, D.~Mantzavinos, S.~Oh, and A.~Stefanov}, {\em Polynomial bound
  and nonlinear smoothing for the {B}enjamin-{O}no equation on the circle}, J.
  Differential Equations, 297 (2021), pp.~25--46.

\bibitem{KatoNLSuniqueness}
{\sc T.~Kato}, {\em On nonlinear {S}chr{\"{o}}dinger equations. {II}.
  {$H^s$}-solutions and unconditional well-posedness}, J. Anal. Math., 67
  (1995), pp.~281--306.

\bibitem{KenigKoenig}
{\sc C.~Kenig and K.~Koenig}, {\em On the local well-posedness of the
  {B}enjamin-{O}no and modified {B}enjamin-{O}no equations}, Math. Res. Lett.,
  10 (2003), pp.~879---895.

\bibitem{KPVuniqBO}
{\sc C.~E. Kenig, G.~Ponce, and L.~Vega}, {\em Uniqueness properties of
  solutions to the {B}enjamin-{O}no equation and related models}, J. Funct.
  An., 278 (2020).

\bibitem{KillipVisanKdV}
{\sc R.~Killip and M.~Vi\c{s}an}, {\em {K}d{V} is wellposed in ${H}^{-1}$},
  Ann. Math., 190 (2019), pp.~249--305.

\bibitem{KillipVisanZhang}
{\sc R.~Killip, M.~Visan, and X.~Zhang}, {\em Low regularity conservation laws
  for integrable {PDE}}, Geom. Funct. Anal., 28 (2018), pp.~1062---1090.

\bibitem{KishimotoBO}
{\sc N.~Kishimoto}, {\em Unconditional uniqueness for the periodic
  {B}enjamin-{O}no equation}, J. Math. Anal. Appl., 514 (2022), no. 2, Paper No. 126309, 22 pp.

\bibitem{KishimotoMBO}
 \leavevmode\vrule height 2pt depth -1.6pt width 23pt, 
{\em Unconditional uniqueness for the periodic modified {B}enjamin-{O}no equation 
by normal form approach}, Int. Math. Res. Not. (to appear), https://doi.org/10.1093/imrn/rnab079. 

\bibitem{KishimotoNLS}
\leavevmode\vrule height 2pt depth -1.6pt width 23pt, 
{\em Unconditional local well-posedness for periodic NLS}, 
J. Differential Eq., 274 (2021), pp.~766--787.

\bibitem{KishimotoProc}
\leavevmode\vrule height 2pt depth -1.6pt width 23pt, {\em Unconditional
  uniqueness of solutions for nonlinear dispersive equations}, arXiv:1911.04349v4.
  

\bibitem{KochTzvetkovIMRN03}
{\sc H.~Koch and N.~Tzvetkov}, {\em On the local well-posedness of the
  {B}enjamin-{O}no equation in ${H}^s(\mathbb{R})$}, Int. Math. Res. Not., 2003
  (2003), pp.~1449--1464.

\bibitem{KochTzvetkovIMRN05}
\leavevmode\vrule height 2pt depth -1.6pt width 23pt, {\em Nonlinear wave
  interactions for the {B}enjamin-{O}no equation}, Int. Math. Res. Not., 2005
  (2005), pp.~1833--1847.

\bibitem{KwonOhIMRN}
{\sc S.~Kwon and T.~Oh}, {\em On unconditional well posedness of modified
  {KdV}}, Int. Math. Res. Not., 15 (2012), pp.~3509--3534.

\bibitem{KwonOhYoon}
{\sc S.~Kwon, T.~Oh, and H.~Yoon}, {\em Normal form approach to unconditional
  well-posedness of nonlinear dispersive {P}{D}{E}s on the real line}, Ann.
  Fac. Sci. Toulouse Math., 29 (2020), pp.~649--720.

\bibitem{MolinetPeriodicBOenergysp}
{\sc L.~Molinet}, {\em Global well-posedness in the energy space for the
  {B}enjamin-{O}no equation on the circle}, Math. Ann., 337 (2007),
  pp.~353--383.

\bibitem{MolinetAJM08}
\leavevmode\vrule height 2pt depth -1.6pt width 23pt, {\em Global
  well-posedness in ${L}^2$ for the periodic {B}enjamin-{O}no equation}, Amer.
  J. Math., 130 (2008), pp.~635--683.

\bibitem{MolinetPilodAPDE12}
{\sc L.~Molinet and D.~Pilod}, {\em The {C}auchy problem for the
  {B}enjamin-{O}no equation in ${L}^2$ revisited}, Anal. PDE, 5 (2012),
  pp.~365--395.

\bibitem{MolinetPilodVentoIbero18}
{\sc L.~Molinet, D.~Pilod, and S.~Vento}, {\em Unconditional uniqueness for the
  modified {K}orteweg-de {V}ries equation on the line}, Rev.
  Mat. Iber., 34 (2018), pp.~1563--1608.
  
  \bibitem{MPVaIHP18}
\leavevmode\vrule height 2pt depth -1.6pt width 23pt,
{\em On well-posedness for some dispersive perturbations of {B}urgers' equation,}
Ann. Inst. H. Poincar\'e Anal. Non Lin\'eaire, 35 (2018), pp. 1719?1756.

\bibitem{MPVjapan}
\leavevmode\vrule height 2pt depth -1.6pt width 23pt, {\em On unconditional
  well-posedness for the periodic modified {K}orteweg-de {V}ries equation}, J.
  Math. Soc. Japan, 71 (2019), pp.~147--201.
  
 \bibitem{MolinetRibaud}
 {\sc L.~Molinet and F. Ribaud}, 
 {\em 
 Well-posedness in $H^1$ for the (generalized) {B}enjamin-{O}no equation on the circle}, 
 Dis. Contin. Dyn. Syst., 23 (2009), pp.~1295--1311.

\bibitem{MolinetSautTzvetkov}
{\sc L.~Molinet, J.-C. Saut, and N.~Tzvetkov}, {\em Ill-posedness issues for
  the {B}enjamin-{O}no and related equations}, SIAM J. Math. Anal., 33 (2001),
  pp.~982--988.
  
  \bibitem{MosincatYoon}
  {\sc R.~Mosincat and H. Yoon,}
  {\em Unconditional uniqueness for the derivative nonlinear {S}chr{\"{o}}dinger equation on the real line,}
  Dis. Contin. Dyn. Syst., 40 (2020), pp.~47--80.

  
  \bibitem{NakTakTsu}
{\sc K.~Nakanishi, H.~Takaoka and Y.~Tsutsumi}, {\em Local well-posedness in low regularity of the {mKdV} equation with periodic boundary condition}, Discrete Contin. Dyn. Syst., 28 (2010), pp.~1635--1654.

\bibitem{Ono}
{\sc H.~Ono}, {\em Algebraic solitary waves on stratified fluids}, J. Phys.
  Soc. Japan, 39 (1975), pp.~1082--1091.

\bibitem{PonceBO}
{\sc G.~Ponce}, {\em On the global well-posedness of the {B}enjamin-{O}no
  equation}, Differ. Integral Equ., 4 (1991), pp.~2055--2077.

\bibitem{SautBO79}
{\sc J.-C. Saut}, {\em Sur quelques g{\'{e}}n{\'{e}}ralisations de l'equation
  de {K}orteweg-de {V}ries}, J. Math. Pures Appl., 58 (1979), pp.~21--61.

\bibitem{SautReview}
\leavevmode\vrule height 2pt depth -1.6pt width 23pt, {\em Benjamin-{O}no and
  {I}ntermediate {L}ong {W}ave equations: modeling, {IST}, and {PDE}}, in
  Nonlinear partial differential equations and inverse scattering, P.~Miller,
  P.~Perry, J.-C. Saut, and C.~Sulem, eds., 83, Fields Institute
  Communications, 2019, pp.~95--160.
  
\bibitem{Sha}
{\sc J. Shatah}, 
{\em Normal forms and quadratic nonlinear Klein-Gordon equations}, 
Comm. Pure Appl. Math., 38 (1985), pp.~685--696.
  
  \bibitem{SunCMP21}
  {\sc R. Sun,}
  {\em Complete integrability of the {B}enjamin-{O}no equation on the multi-soliton manifolds,}
  Comm. Math. Phys.,  383 (2021), pp.~1051--1092.

\bibitem{TakTsu}
{\sc H.~Takaoka and Y.~Tsutsumi}, {\em Well-posedness of the {C}auchy problem
  for the modified {K}d{V} equation with periodic boundary condition}, Int.
  Math. Res. Not.,  (2004), pp.~3009--3040.
  

\bibitem{TalbutBO}
{\sc B.~Talbut}, {\em Low regularity conservation laws for the {B}enjamin-{O}no
  equation}, Math. Res. Lett., 28 (2021), pp.~889--905.

\bibitem{TaoBO04}
{\sc T.~Tao}, {\em Global well-posedness of the {B}enjamin--{O}no equation in
  ${H}^1(\mathbb{R})$}, Journal of Hyperbolic Differential Equations, 1 (2004),
  pp.~27--49.

  \bibitem{Tatajm00}
  {\sc D.~Tataru,}
{\em Strichartz estimates for operators with nonsmooth coefficients and the nonlinear wave equation}, 
Amer. J. Math., 122 (2000), 349--376.
  
 \bibitem{WuSIMA17}
 {\sc Y. Wu,} {\em Jost solutions and the direct scattering problem of the {B}enjamin-{O}no equation,}
 SIAM J. Math. Anal., 49 (2017), pp.~5158--5206.

\bibitem{ZhouKdV}
{\sc Y.~Zhou}, {\em Uniqueness of weak solution of the {KdV} equation}, Int.
  Math. Res. Not.,  (1997), pp.~271--283.

\bibitem{Zygmund74}
{\sc A. Zygmund}, 
{\em On {F}ourier coefficients and transforms of functions of two variables}, 
Studia Math., 50 (1974), pp.~189--201.

\end{thebibliography}

\end{document}